\newtheorem{thm}{Theorem}[section]
\newtheorem{cor}[thm]{Corollary}
\newtheorem{lem}[thm]{Lemma}
\newtheorem{prop}[thm]{Proposition}
\newtheorem{defn-lem}[thm]{Definition--Lemma}
\theoremstyle{definition}
\newtheorem{defn}[thm]{Definition}
\newtheorem{rmk}[thm]{Remark}
\newcommand{\C}{\ensuremath\mathds{C}}
\newcommand{\Z}{\ensuremath\mathds{Z}}
\newcommand{\Q}{\ensuremath\mathds{Q}}
\newcommand{\F}{\ensuremath\mathrm{F}}
\newcommand{\PP}{\ensuremath\mathds{P}}
\newcommand{\calO}{\ensuremath\mathcal{O}}
\newcommand{\HH}{\ensuremath\mathrm{H}}
\newcommand{\CH}{\ensuremath\mathrm{CH}}
\begin{document}

\title{Rationality, universal generation and the integral Hodge conjecture}
\author{Mingmin Shen}

\thanks{2010 {\em Mathematics Subject Classification.} 14E08, 14C25, 14C30.}

\thanks{{\em Key words and phrases.} algebraic cycles, Hodge structure, cubic threefolds, cubic fourfolds.}

\address{
KdV Institute for Mathematics, University of Amsterdam, P.O.Box 94248, 1090 GE Amsterdam, Netherlands}
\email{M.Shen@uva.nl}

\date{\today}

\begin{abstract} 
We use the universal generation of algebraic cycles to relate (stable) rationality to the integral Hodge conjecture. We show that the Chow group of 1-cycles on a cubic hypersurface is universally generated by lines. Applications are mainly in cubic hypersurfaces of low dimensions. For example, we show that if a generic cubic fourfold is stably rational then the Beauville--Bogomolov form on its variety of lines, viewed as an integral Hodge class on the self product of its variety of lines, is algebraic. In dimension $3$ and $5$, we relate stable rationality with the geometry of the associated intermediate Jacobian.
\end{abstract}

\maketitle

\section{Introduction}

An algebraic variety $X$ is \textit{rational} if it contains an open subset that can be identified with an open subset of the projective space of the same dimension. It is called \textit{stably rational} if the product of $X$ and some projective space is rational. The rationality problem is to tell whether a given variety is (stably) rational or not. It is one of the most subtle problems in algebraic geometry. 

We work over the field $\C$ of complex numbers unless otherwise stated. The L\"uroth theorem and Castelnuovo's criterion settled the rationality problem in dimensions one and two. One breakthrough in dimension three was made by Clemens--Griffiths \cite{cg}, where they showed that a smooth cubic threefold is not rational. Other important methods that appeared around the same time include Artin--Mumford \cite{am} and Iskovskikh--Manin \cite{im}. 

The (stable) rationality problem in dimension three is closely related to the geometry of the intermediate Jacobian. 

\begin{thm}\label{thm cgv}
Let $X$ be a smooth projective variety of dimension three and let $(J^3(X),\Theta)$ be its intermediate Jacobian. 
\begin{enumerate}
\item (Voisin \cite{voisin invent}) If $X$ is stably rational, then the mininal class $\frac{\Theta^{g-1}}{(g-1)!}$ is algebraic.
\item (Clemens--Griffiths \cite{cg}) If $X$ is rational, then the minimal class $\frac{\Theta^{g-1}}{(g-1)!}$ is algebraic and effective (which is equivalent to that $J^3(X)$ is a Jacobian of curves).
\end{enumerate}
\end{thm}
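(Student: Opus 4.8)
The plan is to treat the two parts by a common mechanism: pass from the hypothesis on $X$ to an algebraic factorization of the identity on the weight-$3$ Hodge structure $H^3(X,\Z)$ through curves, and then read off the consequences for the minimal class of $(J^3(X),\Theta)$. Throughout I write $g=\dim J^3(X)$ and use that the principal polarization $\Theta$ is induced by the cup-product pairing on $H^3(X,\Z)$, so that $J^3(X)=J(H^3(X,\Z))$ as a principally polarized abelian variety.

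For \emph{part (1)} I would first use that stable rationality of $X$ forces $\CH_0(X)$ to be universally trivial, whence by Bloch--Srinivas the diagonal admits an \emph{integral} Chow decomposition
\[
\Delta_X = X\times x + Z \in \CH^3(X\times X),
\]
with $Z$ supported on $D\times X$ for a divisor $D\subsetneq X$. Desingularizing $D$ and composing with the inclusion gives $\tau\colon \tilde D\to X$ from a smooth surface, together with $\Gamma\in\CH^2(\tilde D\times X)$ satisfying $Z=(\tau\times\mathrm{id})_*\Gamma$. Computing the correspondence action on $H^3$, the term $X\times x$ acts by $0$, so
\[
\mathrm{id}_{H^3(X,\Z)} = Z_* = \Gamma_*\circ\tau^*,
\]
a factorization of the identity through $H^3(\tilde D,\Z)$ by morphisms of integral Hodge structures induced by algebraic correspondences. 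Hard Lefschetz on the surface $\tilde D$ identifies $H^3(\tilde D)$ with $H^1(\tilde D)(-1)$ via cup product with the (algebraic) hyperplane class, and restricting to a smooth very ample curve $C\subset\tilde D$ realizes $H^1(\tilde D,\Z)$ as a polarized direct summand of $H^1(C,\Z)$, again by algebraic, polarization-compatible correspondences. This descent to a curve is essential: the Albanese of a surface carries no algebraic minimal class in general, whereas on $\mathrm{Jac}(C)$ the minimal class is, by Poincaré's formula, the Abel--Jacobi class of $C$, hence algebraic. Chaining these, $(H^3(X,\Z),\text{cup product})$ becomes an integral direct factor of $H^1(C,\Z)$, so that $(J^3(X),\Theta)$ is cut out of $\mathrm{Jac}(C)$ by an algebraic projector, and I would transport algebraicity of the minimal class across this projector to conclude that $\frac{\Theta^{g-1}}{(g-1)!}$ is algebraic.

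For the sharper \emph{part (2)} under rationality I would instead resolve a birational map $X\dashrightarrow\PP^3$ into a composite of blow-ups along smooth points and smooth curves, and track the effect on intermediate Jacobians: blowing up a point leaves $J^3$ unchanged, while blowing up a smooth curve $C_i$ adds $\mathrm{Jac}(C_i)$ as a principally polarized direct summand. As $J^3(\PP^3)=0$, this yields an isomorphism of principally polarized abelian varieties
\[
J^3(X)\oplus\bigoplus_i \mathrm{Jac}(C_i)\;\cong\;\bigoplus_j \mathrm{Jac}(C'_j).
\]
Invoking the uniqueness of the decomposition of a principally polarized abelian variety into indecomposables, $J^3(X)$ must itself be a product of Jacobians of smooth curves; the minimal class of such a product is the sum of the Abel--Jacobi classes of the factor curves, hence algebraic \emph{and} effective. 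The stated equivalence with $J^3(X)$ being a Jacobian of curves is then the converse, supplied by the Matsusaka criterion characterizing products of Jacobians among principally polarized abelian varieties by effectivity of the minimal class.

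The main obstacle I anticipate is the integral bookkeeping in the transfer of the minimal class in part (1): being algebraic is an integral statement about the \emph{specific} class $\frac{\Theta^{g-1}}{(g-1)!}$, so it is not enough to split $J^3(X)$ off $\mathrm{Jac}(C)$ up to isogeny — the correspondences must be compatible with the principal polarizations so that the minimal class of $\mathrm{Jac}(C)$ restricts to \emph{exactly} the minimal class of $J^3(X)$, not merely a multiple of it. Ensuring this compatibility (equivalently, that the projector cutting out $J^3(X)$ is orthogonal for the polarization and defined over $\Z$) is the delicate point, and it is precisely where the integral, rather than merely rational, decomposition of the diagonal is indispensable.
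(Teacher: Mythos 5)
Your part (2) is essentially the classical Clemens--Griffiths argument that the paper cites, and it is sound in outline: blow-up formula for $J^3$, uniqueness of the decomposition of a principally polarized abelian variety (ppav) into indecomposables, Poincar\'e's formula, and Matsusaka--Hoyt for the parenthetical equivalence. One slip: your displayed isomorphism $J^3(X)\oplus\bigoplus_i \mathrm{Jac}(C_i)\cong\bigoplus_j \mathrm{Jac}(C'_j)$ presupposes a \emph{strong} factorization of $X\dashrightarrow\PP^3$ into blow-ups on both sides, which is not available. The classical route resolves the inverse map $\PP^3\dashrightarrow X$ by blowing up the source, obtaining $Y\to\PP^3$ a composite of smooth blow-ups (so $J^3(Y)\cong\bigoplus_j \mathrm{Jac}(C'_j)$) together with a birational \emph{morphism} $\mu:Y\to X$, for which $\mu^*$ and $\mu_*$ exhibit $J^3(X)$ as a ppav direct factor of $J^3(Y)$; uniqueness of decomposition then concludes as you say. (Weak factorization would also rescue your version via a zigzag, but Clemens--Griffiths needed neither.)

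Part (1), however, has a genuine gap that your last paragraph correctly locates but does not, and cannot, repair along the chosen route. The decomposition gives you algebraic correspondences with $\mathrm{id}_{\HH^3(X,\Z)}=\Gamma_*\circ\tau^*$, i.e.\ a factorization through $\HH^3(\tilde D,\Z)$, with \emph{no} control of adjoints or polarizations; both of your next steps then fail integrally. Cup product with the hyperplane class $\HH^1(\tilde D,\Z)\to\HH^3(\tilde D,\Z)$ is injective but in general not surjective (hard Lefschetz is only a $\Q$-statement), and the Lefschetz restriction $\HH^1(\tilde D,\Z)\hookrightarrow\HH^1(C,\Z)$ is neither split nor isometric: the form induced from $C$ is cup product against $[C]$, not a principal one, so $\HH^1(\tilde D,\Z)$ is not a ``polarized direct summand'' of $\HH^1(C,\Z)$. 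There is also a structural tell-tale that the approach overreaches: if $(J^3(X),\Theta)$ really were cut out of $\mathrm{Jac}(C)$ by a polarization-orthogonal algebraic projector, then Clemens--Griffiths uniqueness would force $J^3(X)$ to be a product of Jacobians, i.e.\ the minimal class would be \emph{effective} --- you would have derived conclusion (2) from the hypothesis of (1), strictly more than the theorem asserts. Voisin's actual proof, which this paper does not reproduce for Theorem \ref{thm cgv} but axiomatizes in Proposition \ref{prop voisin} and Theorem \ref{thm theta}, never splits anything: one symmetrizes via $\Delta_X\circ{}^t\Delta_X$ to write $\Delta_X = X\times x + x\times X + \sum_i n_i\,\Gamma_i\circ{}^t\Gamma_i$ with $\Gamma_i\in\CH_2(C_i\times X)$ correspondences from smooth curves and $n_i\in\Z$. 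This expresses the polarization as $\langle\alpha,\beta\rangle_X=\sum_i n_i\langle\Gamma_i^*\alpha,\Gamma_i^*\beta\rangle_{C_i}$, and the associated Abel--Jacobi homomorphisms $\phi_i: J(C_i)\to J^3(X)$ push the curves to an algebraic $1$-cycle $\sum_i n_i(\phi_i)_*[C_i]$ whose class is exactly $\frac{\Theta^{g-1}}{(g-1)!}$ --- algebraic, but since the $n_i$ may be negative and the $\phi_i$ are not polarized embeddings, in no way effective nor arising from a direct-factor projector; this is the precise sense in which (1) is weaker than (2). (A minor point: the \emph{integral} decomposition you invoke is not Bloch--Srinivas, which carries denominators, but Voisin's observation that stable rationality forces universal $\CH_0$-triviality and hence an integral Chow decomposition.)
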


The integral Hodge conjecture is the statement that every integral Hodge class is an \textit{algebraic class}, namely the cohomology class of some integral algebraic cycle. It is known that the integral Hodge conjecture is false in general. The relation between the rationality problem and the integral Hodge conjecture is very mysterious. Theorem \ref{thm cgv}, especially the first statement, can be viewed as a beautiful connection between (stable) rationality and the integral Hodge conjecture. In this paper, we develop a method to achieve more such statements. The main applications will be given to cubic threefolds and cubic fourfolds. We first recall the definition of a decomposition of the diagonal.

\begin{defn}[\cite{voisin aj, voisin invent}]\label{defn decomp diag}
Let $X$ be a smooth projective variety of dimension $d$. We say that $X$ admits a \textit{Chow-theoretical decomposition of the diagonal} if
\[
 \Delta_X = X\times x + Z,\quad \text{in } \CH_d(X\times X),
\]
where $x\in X$ is a closed point on $X$ and $Z$ is an algebraic cycle supported on $D\times X$ for some divisor $D\subset X$. We say that $X$ has a \textit{cohomological decomposition of the diagonal} if the above equality holds in $\HH^{2d}(X\times X,\Z)$.
\end{defn}

One important fact is that a stably rational variety always admits a Chow-theoretical (and hence a cohomological) decomposition of the diagonal. Voisin \cite{voisin invent} used decomposition of the diagonal to show new examples of three dimensional unirational varieties which are not stably rational. Then Colliot-Th\'el\`ene--Pirutka \cite{ct-p} generlized this method to show that a very general quartic threefold is not stably rational. Along the same line of ideas, Totaro \cite{totaro} showed that a very general hypersurface of degree in a certain range is not stably rational. Many new results of non-rationality were obtained in the recent years \cite{hkt, hpt1, hpt2, ht, okada}. The smallest possible degree for a hypersurface to be irrational is three. In dimension three, it is not known whether there exists a smooth cubic threefold that is stably rational or that is not stably rational. When it comes the case of smooth cubic fourfolds, we know there exist rational cubic fourfolds \cite{bd,hassett2, ahtv}. It is expected that a very general cubic fourfold is not rational. However, no single cubic fourfold has been proven irrational.

\subsection{Main results} Let $X\subset \PP^{d+1}_{\C}$ be a smooth cubic hypersurface of dimension $d\geq 3$ and let $F=F(X)$ be the variety of lines on $X$. It is known by Altman--Kleiman \cite{ak} that $F$ is a smooth projective variety of dimension $2d-4$. Over $F$ we have the universal family of lines
\[
\xymatrix{ P\ar[r]^q\ar[d]_p &X\\
 F 
}
\] 
Then we can view $P\times P$ as a correspondence from $F\times F$ to $X\times X$. Let $h\in \CH^1(X)$ be the class of a hyperplane section. 

\begin{thm}\label{thm main chow}
Assume that $X$ admits a Chow theoretical decomposition of the diagonal. 
\begin{enumerate}
\item If $d=3$, then there exists a symmetric 1-cycle $\theta$ on $F\times F$ such that
\[
\Delta_X = X\times x + x\times X + \gamma\times h + h\times \gamma + (P\times P)_*\theta, \quad \text{in }\CH^3(X\times X),
\]
for some $\gamma\in \CH_1(X)$.

\item If $d=4$, then there exists a symmetric 2-cycle $\theta$ on $F\times F$ such that
\[
\Delta_X = X\times x + x\times X + \Sigma + (P\times P)_*\theta, \quad \text{in }\CH^4(X\times X),
\]
where $\Sigma\in \CH^2(X)\otimes \CH^2(X)$ is a symmetric decomposable 4-cycle. Moreover if $X$ is very general, then $\Sigma$ can be chosen to be zero.
\end{enumerate}
\end{thm}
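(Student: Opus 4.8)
The plan is to run the decomposition of the diagonal through the universal generation of $1$-cycles by lines, via a generic-fibre computation that is then spread back out to a cycle identity, absorbing every error term into the displayed types. Consider first $d=3$. By hypothesis $\Delta_X=X\times x+Z$ in $\CH_3(X\times X)$ with $Z$ supported on $D\times X$ for a divisor $D\subsetneq X$. I would pick a resolution $\tau\colon\widetilde D\to D\subset X$ with $\dim\widetilde D=2$ and lift $Z$ to $\widetilde Z\in\CH_3(\widetilde D\times X)$ with $(\tau\times\mathrm{id})_*\widetilde Z=Z$, accepting an error supported over a proper closed subset of $\widetilde D$ (produced by the localization sequence and cleaned up by the induction below). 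Restricting $\widetilde Z$ to the generic point $\xi$ of $\widetilde D$ yields a class $\widetilde Z_\xi\in\CH_1(X_{\C(\widetilde D)})$, since the generic fibre of the $3$-cycle $\widetilde Z$ over the surface $\widetilde D$ is a $1$-cycle.

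Now apply the universal generation of $\CH_1$ by lines over $K=\C(\widetilde D)$ to write $\widetilde Z_\xi=\sum_i n_i[l_i]+m\,h^2_\xi$, where each $l_i$ is a $K$-point of $F$, i.e.\ a rational section $s_i\colon\widetilde D\dashrightarrow F$. Each such family of lines spreads out, through the universal family $P$, to a $3$-cycle $\Gamma_{s_i}\subset\widetilde D\times X$, and by construction $\widetilde Z\equiv\sum_i n_i\Gamma_{s_i}+m\,(\widetilde D\times h^2)$ modulo cycles supported over a proper closed $\widetilde D'\subsetneq\widetilde D$. Pushing forward by $\tau\times\mathrm{id}$ and invoking $\Pic(X)=\Z h$, so that $[D]=nh$ in $\CH^1(X)$, the constant summand gives $(\tau\times\mathrm{id})_*(\widetilde D\times h^2)=[D]\times h^2=h\times\gamma$ with $\gamma=n\,h^2$, already of the allowed shape. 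The terms $(\tau\times\mathrm{id})_*\Gamma_{s_i}$ have their second factor swept by lines but their first factor still ranging over $D$; here I would run the identical reduction on the first factor, again using that every point of $X$ lies on a line and that $[D]=nh$, to rewrite each such term either as a genuine product of two lines, hence as $(P\times P)_*$ of a cycle on $F\times F$, or as a further $\gamma\times h$ or $h\times\gamma$ contribution. The residual cycles over $\widetilde D'$ (of dimension $\le1$) are eliminated by descending induction on the dimension of the support, each step contributing only terms of the listed types and the base case (support a point) producing the $X\times x$ and $x\times X$ summands. Because the whole construction is symmetric under the factor-swap on $P\times P$ and $\Delta_X={}^t\Delta_X$, the reduction can be arranged symmetrically so that $\theta$ is symmetric.

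For $d=4$ the strategy is unchanged, and crucially the generic fibre is again a $1$-cycle: now $\dim\widetilde D=3$, so the generic fibre of the $4$-cycle $\widetilde Z$ over the threefold $\widetilde D$ is a $1$-cycle, whence $\widetilde Z_\xi\in\CH_1(X_{\C(\widetilde D)})$ and the same universal generation by lines applies. Spreading out and pushing forward as before produces the line term $(P\times P)_*\theta$ with $\theta\in\CH_2(F\times F)$ together with the point terms; the new feature is that, since $D$ is now a threefold, the leftover genuinely decomposable pieces no longer all collapse to $h$-multiples but assemble into a symmetric $\Sigma\in\CH^2(X)\otimes\CH^2(X)$. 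When $X$ is very general, I would invoke the extreme smallness of the algebraic part of $H^4(X,\Z)$, namely that it is generated by $h^2$, to force $\Sigma$ to be a multiple of $h^2\times h^2$; as $h^2$ is itself represented by a line-swept cycle, this residue is absorbed into $(P\times P)_*\theta$ and $\Sigma$ may be taken to be $0$.

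The main obstacle is the rigorous integral spreading-out and induction: turning the generic-fibre statement into a global identity on $\widetilde D\times X$ while forcing each error term into exactly one of the allowed types, and in particular converting the first factor (supported on the divisor $D$) into line-swept form without dividing by the degree of $q\colon P\to X$. Keeping the bookkeeping integral through the descending induction, arranging the reduction symmetrically to obtain a symmetric $\theta$, and establishing the vanishing of $\Sigma$ for very general cubic fourfolds are the delicate points; the universal generation of $\CH_1$ by lines is the engine that makes all of them run.
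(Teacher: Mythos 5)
Your engine (the universal generation of $\CH_1$ by lines) is the correct one, but the proposal is missing the structural step that makes it applicable to \emph{both} factors at once, and this is a genuine gap. The paper never runs the asymmetric decomposition $\Delta_X=X\times x+Z$ directly; it first symmetrizes, composing $\Delta_X=\Delta_X\circ{}^t\Delta_X$ (Proposition \ref{prop voisin}, after Voisin) to get $\Delta_X=x\times X+X\times x+\sum n_i\,\Gamma_i\circ{}^t\Gamma_i$ with $Z_i$ smooth of dimension $d-2$ and $\Gamma_i\in\CH_{d-1}(Z_i\times X)$. Each square term involves a single correspondence, so one application of universal generation over $\C(Z_i)$ --- producing $T_i\in\CH(Z_i\times F)$ with $\Gamma'_i:=P\circ T_i$ agreeing with $\Gamma_i$ at $\eta_{Z_i}$ --- converts \emph{both} factors simultaneously, since $\Gamma'_i\circ{}^t\Gamma'_i=(P\times P)_*(T_i\circ{}^tT_i)$, and $\theta=\sum n_i\,T_i\circ{}^tT_i$ is symmetric by construction. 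In your sequential scheme the second step fails: after the first reduction, a term $(\tau\times\mathrm{id})_*\Gamma_{s_i}$ is (for a dominant family of lines) generically finite over the second copy of $X$, so its first-factor generic fibre is a $0$-cycle on $X_{\C(X)}$, not a $1$-cycle, and universal generation by lines does not apply to it; replacing it via universal $\CH_0$-triviality merely displaces the entire content into cycles supported on $X\times D'$ for a divisor $D'$, and your descending induction has no mechanism to terminate in the allowed types. Likewise, the observation that $\Delta_X={}^t\Delta_X$ does not by itself produce a symmetric $\theta$; the symmetry comes from the squares $T_i\circ{}^tT_i$, i.e.\ from the symmetrization you skipped.

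For $d=4$ there is a second, independent gap: the claim that residual terms over lower-dimensional supports ``contribute only terms of the listed types'' fails in the Chow group. After the line replacement the errors factor through curves, and a correspondence $\Gamma\circ{}^t\Gamma'$ with $\Gamma,\Gamma'\in\CH_3(C\times X)$, $C$ a curve, is in general not of the displayed shape in $\CH^4(X\times X)$: its generic fibre over $C$ is a $2$-cycle, for which no universal generation statement is available (Proposition \ref{prop relation}(iii) controls only $2\xi$ --- precisely the factor of $2$ you flagged yourself). The paper circumvents this by leaving the Chow group: such error terms are only \emph{cohomologically} decomposable; the integral Hodge conjecture for cubic fourfolds (Voisin) realizes the decomposable class by an algebraic cycle $\gamma\otimes h+h\otimes\gamma+\Sigma$; Proposition \ref{prop alg triv} (proved via the resolution of $X^{[2]}\dashrightarrow P_X$) allows modifying $\theta$ so that the remaining difference is algebraically trivial; and the Voevodsky--Voisin nilpotence theorem then closes the identity in $\CH^4(X\times X)$ after a final modification of $\theta$ and $\Sigma$. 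None of these three ingredients (IHC, algebraic triviality via the Hilbert-scheme geometry, nilpotence) appears in your proposal, and without them the $d=4$ case does not close integrally. Your treatment of the very general case ($\mathrm{Hdg}^4(X)=\Z h^2$, absorbing the multiple of $h^2\otimes h^2$ into $(P\times P)_*\theta$ via a class $\tau'\in\CH_1(F)$ with $P_*\tau'=h^2$ in cohomology) is in the right spirit, but in the paper it too is executed at the cohomological stage and then fed through the nilpotence argument, not through an integral induction on supports.
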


When $d=4$, the variety $F$ is a hyperk\"ahler fourfold that is deformation equivalent to the Hilbert scheme of two points on a $K3$-surface. The canonical Beauville--Bogomolov bilinear form 
$$
\mathfrak{B}: \HH^2(F,\Z)\times \HH^2(F,\Z) \longrightarrow \Z
$$
gives rise to an integral Hodge class $q_{\mathfrak{B}}\in \HH^{12}(F\times F,\Z)$. We have the corresponding statement at the level of cohomology as follows.

\begin{thm}\label{thm main coh}
Let $X$ be a smooth cubic hypersurface of dimension $d=3$ or $4$ and let $F$ be the variety of lines on $X$. Then $X$ admits a cohomological decomposition of the diagonal if and only if there exists a symmetric $(d-2)$-cycle $\theta$ on $F\times F$ such that
\[
 [\theta]\cdot \hat\alpha\otimes \hat\beta = \langle \alpha,\beta \rangle_X
\]
for all $\alpha,\beta\in \HH^d(X,\Z)_{\mathrm{tr}}$, where $\hat\alpha := P^*\alpha$. If $d=4$ and $X$ is very general, then the above condition is also equivalent to the fact that the Beauville--Bogomolov form $q_{\mathfrak{B}}$ is algebraic.
\end{thm}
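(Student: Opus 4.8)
The plan is to reduce both directions to a single assertion about the middle Künneth component of $[\Delta_X]$ and then to transport that assertion from $X\times X$ to $F\times F$ through the cylinder correspondence $P$. First I would record the shape of the cohomology of a smooth cubic $X$: away from the middle degree one has $\HH^{2i}(X,\Z)=\Z\, h^i$ and $\HH^{\mathrm{odd}}(X,\Z)=0$, while $\HH^d(X,\Z)=\HH^d(X,\Z)_{\mathrm{alg}}\oplus\HH^d(X,\Z)_{\mathrm{tr}}$ orthogonally. Decomposing $[\Delta_X]$ by Künneth, every component other than the middle one is a $\Z$-multiple of some $h^i\otimes h^{d-i}$, and the middle component $\delta_{d,d}$ splits (with no cross terms, by orthogonality) as $\delta_{\mathrm{alg}}+\delta_{\mathrm{tr}}$, where $\delta_{\mathrm{tr}}\in\HH^d(X,\Z)_{\mathrm{tr}}\otimes\HH^d(X,\Z)_{\mathrm{tr}}$ is exactly the intersection form $\langle\,,\,\rangle_X$ on the transcendental lattice.

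Using that the integral Hodge conjecture holds for the algebraic part of $\HH^d$ (trivially $\HH^3_{\mathrm{alg}}=0$ when $d=3$, and a theorem of Voisin when $d=4$), the classes $h^i\otimes h^{d-i}$ with $i\ge 1$, the term $x\times X$, and $\delta_{\mathrm{alg}}$ are all represented by integral cycles whose first factor has positive codimension, hence are supported on $D\times X$; the only codimension-zero first factor is $X\times x$. I would therefore isolate the clean equivalence: $X$ admits an integral cohomological decomposition of the diagonal if and only if $\delta_{\mathrm{tr}}$ is the class of an integral algebraic cycle supported on $D\times X$ for some divisor $D$. Rationally this always holds, since cubics are rationally connected and so $\CH_0(X)=\Z$ forces a Bloch--Srinivas decomposition; the content of the theorem is entirely integral.

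Next I would identify the displayed pairing with the algebraicity of $\delta_{\mathrm{tr}}$ through $P$. Writing $P^*=p_*q^*\colon \HH^d(X,\Z)\to\HH^{d-2}(F,\Z)$, adjunction for the correspondence $P\times P$ gives $\langle (P\times P)_*[\theta],\alpha\otimes\beta\rangle_{X\times X}=\langle[\theta],\hat\alpha\otimes\hat\beta\rangle_{F\times F}=[\theta]\cdot\hat\alpha\otimes\hat\beta$, so the $(d,d)$-component of $(P\times P)_*[\theta]$, viewed as a bilinear form on $\HH^d(X)$, is $(\alpha,\beta)\mapsto[\theta]\cdot\hat\alpha\otimes\hat\beta$. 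The condition in the theorem thus says precisely that the transcendental part of $(P\times P)_*[\theta]$ equals $\delta_{\mathrm{tr}}$. For the forward implication I would run the construction of Theorem \ref{thm main chow} with cohomological rather than Chow input: an integral cohomological decomposition feeds the same correspondence manipulations and yields an equality $[\Delta_X]=[X\times x]+[x\times X]+(h\text{-terms})+(P\times P)_*[\theta]$ in $\HH^{2d}(X\times X,\Z)$; pairing against $\alpha\otimes\beta$ with $\alpha,\beta\in\HH^d(X,\Z)_{\mathrm{tr}}$ annihilates every term except the last, leaving $[\theta]\cdot\hat\alpha\otimes\hat\beta=\langle\alpha,\beta\rangle_X$. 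Symmetry of $\theta$ can be arranged because $\Delta_X$ is invariant under the swap $\iota$; the Koszul sign makes a geometrically symmetric $\theta$ produce an antisymmetric form when $d=3$ and a symmetric one when $d=4$, matching the parity of $\langle\,,\,\rangle_X$.

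For the converse I would set $\Gamma=(P\times P)_*\theta$; by the computation above its transcendental part is $\delta_{\mathrm{tr}}$, so $\delta_{\mathrm{tr}}$ is integrally algebraic, and combining $\Gamma$ with the off-transcendental terms of the first step should produce the decomposition. The step I expect to be the main obstacle is exactly the reconciliation of supports with integral coefficients: $\Gamma$ realizes $\delta_{\mathrm{tr}}$ but is not a priori supported on $D\times X$ in its first factor, so I must promote the rational Bloch--Srinivas cycle on $D\times X$ to an integral one, the obstruction to which lives in the transcendental lattice and is annihilated by the existence of $\theta$. Finally, for $d=4$ with $X$ very general, $\HH^4(X,\Z)_{\mathrm{tr}}=\HH^4_{\mathrm{prim}}(X)$ and Beauville--Donagi \cite{bd} gives an isomorphism of Hodge structures $P^*\colon \HH^4_{\mathrm{prim}}(X)\xrightarrow{\ \sim\ }\HH^2_{\mathrm{prim}}(F)$ carrying $\langle\,,\,\rangle_X$, up to a universal constant, to the Beauville--Bogomolov form; since $\HH^2(F,\Z)=\Z g\oplus\HH^2_{\mathrm{prim}}(F)$ for very general $X$ and the $g$-contribution to $q_{\mathfrak{B}}$ is realized by explicit cycles, a $\theta$ realizing the primitive part completes to one realizing all of $q_{\mathfrak{B}}$ and conversely. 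Here I would still need to pin down the comparison constant between $\langle\,,\,\rangle_X$ and $\mathfrak{B}$ integrally.
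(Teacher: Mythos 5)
Your skeleton --- Künneth analysis of $[\Delta_X]$, transporting the decomposition through $P$ to build $\theta$, the converse via orthogonality to $\HH^d(X,\Z)_{\mathrm{tr}}\otimes\HH^d(X,\Z)_{\mathrm{tr}}$ plus the integral Hodge conjecture on cubic fourfolds, and Beauville--Donagi for the last clause --- is the same as the paper's, which assembles the proof from Theorem \ref{thm theta}, Theorem \ref{thm cubic 34}, Proposition \ref{prop fil cubic} and the proposition on $q_{\mathfrak{B}}$. But your opening reduction contains a genuine error when $d=4$: the splitting $\HH^4(X,\Z)=\HH^4(X,\Z)_{\mathrm{alg}}\oplus\HH^4(X,\Z)_{\mathrm{tr}}$ is \emph{not} integral. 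The lattice $\HH^4(X,\Z)$ is unimodular and $\mathrm{Hdg}^4(X)$ is saturated, but $\langle h^2,h^2\rangle_X=3$, so for very general $X$ the sublattice $\Z h^2\oplus (h^2)^{\perp}$ has index $3$; the transcendental projector is $\alpha\mapsto\alpha-\tfrac13\langle\alpha,h^2\rangle_X\,h^2$, which is not integral, so your class $\delta_{\mathrm{tr}}$ does not exist in $\HH^4(X,\Z)\otimes\HH^4(X,\Z)$ and the ``clean equivalence'' you isolate is ill-posed precisely in the case the theorem cares most about. (Your claim $\HH^{2i}(X,\Z)=\Z h^i$ away from the middle fails similarly above the middle degree: $\HH^6(X,\Z)=\Z\ell$ with $h^3=3\ell$; harmless here, but the same oversight.) The repair is the integral lemma the paper uses implicitly: with $M=\HH^4(X,\Z)$, $N=\mathrm{Hdg}^4(X)$, $T=N^{\perp}$, unimodularity of $M$ and saturatedness of $N$ give $M/N\cong T^{\vee}$, so the kernel of $M\otimes M\to T^{\vee}\otimes T^{\vee}$ is exactly $N\otimes M+M\otimes N$; a Hodge class orthogonal to $T\otimes T$ therefore lies there, a comparison of Hodge types then forces the $M$-legs to be Hodge classes, and Voisin's integral Hodge theorem \cite{voisin 07} makes everything algebraic. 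With this lemma in place of your splitting, your converse goes through.

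Two further points. The step you single out as ``the main obstacle'' in the converse is not an obstacle at all: since $\theta$ is a $(d-2)$-cycle on $F\times F$, the lines parametrized by the first projection of its support sweep out a subvariety of $X$ of dimension at most $d-1$, so $(P\times P)_*\theta$ is automatically supported on $D\times X$ for some divisor $D$; no promotion of a Bloch--Srinivas cycle is needed. By contrast, the forward step you pass over quickly does need justification: a merely cohomological decomposition must be converted into input for the correspondence manipulations, which one does either via Voisin's theorem that cohomological and Chow decompositions agree for cubic hypersurfaces \cite{voisin universal}, or by noting that the cohomological analogue of Proposition \ref{prop voisin} is Voisin's original statement; the replacement $\Gamma_i\rightsquigarrow P\circ T_i$ then uses the unconditional universal generation of $\CH_1(X)$ by lines (Theorem \ref{thm cubic generation}), and $\Gamma_i-P\circ T_i$ factors through a divisor of $Z_i$, hence acts as zero on $\HH^d(X,\Z)_{\mathrm{tr}}=F^3\HH^d(X,\Z)$ by Proposition \ref{prop fil cubic}. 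Finally, the comparison constant you left open is $\mathfrak{B}(\hat\alpha,\hat\beta)=-\langle\alpha,\beta\rangle_X$ on primitive classes, immediate from the Beauville--Donagi formula quoted in the paper; and for very general $X$ the $g$-part of $q_{\mathfrak{B}}$ is decomposable and algebraic because $q_*p^*:\HH^6(F,\Z)\to\HH^4(X,\Z)$ is an isomorphism under which $h^2$ lifts to an algebraic class, so $\mathrm{Hdg}^6(F)$ is algebraic --- which is exactly how the paper's proposition on $q_{\mathfrak{B}}$ closes the loop.
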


In the case of $d=3$, the above result has the following interesting application.

\begin{cor}
Let $X$ be a smooth cubic threefold and let $(J^3(X),\Theta)$ be its intermediate Jacobian. If $X$ admits a decomposition of the diagonal, then the following statements hold.
\begin{enumerate}
\item The minimal class of $J^3(X)$ is algebraic and supported on a divisor of cohomology class $3\Theta$.
\item Twice of the minimal class of $J^3(X)$ is represented by a symmetric (with respect to the multiplication-by-($-1$) morphism of $J^3(X)$) 1-cycle supported on a theta divisor.
\end{enumerate}
\end{cor}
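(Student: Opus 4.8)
The plan is to feed the Chow-theoretic decomposition of Theorem \ref{thm main chow}(1) into the Abel--Jacobi/cylinder picture for the cubic threefold and to read off the minimal class on $J^3(X)$ from the $\theta$-term. First I would record, following Clemens--Griffiths, the cylinder isomorphism $\Psi=p_*q^*\colon\HH^3(X,\Z)\xrightarrow{\sim}\HH^1(F,\Z)$ of polarized Hodge structures, which identifies $(J^3(X),\Theta)$ with $(\Alb(F),\Theta)$ and the intersection form $\langle\,,\,\rangle_X$ on $\HH^3(X,\Z)$ with the principal polarization form $E$ on $\HH^1(F,\Z)=\HH^1(J^3(X),\Z)$. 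Since $\HH^{3,0}(X)=0$, the transcendental lattice is all of $\HH^3(X,\Z)$, so Theorem \ref{thm main coh} (with $d=3$) applies to the symmetric $1$-cycle $\theta$ of Theorem \ref{thm main chow}(1) and gives $[\theta]\cdot(\hat\alpha\otimes\hat\beta)=\langle\alpha,\beta\rangle_X$ for all $\alpha,\beta\in\HH^3(X,\Z)$, where $\hat\alpha=P^*\alpha=\Psi(\alpha)$. Equivalently, the $(3,3)$-Künneth component $[\theta]_{3,3}\in\HH^3(F)\otimes\HH^3(F)$, viewed through Poincaré duality on $F$ as a bilinear form on $\HH^1(F)$, equals $E$.

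Next I would transport $\theta$ to $J^3(X)$. Let $a\colon F\hookrightarrow J^3(X)$ be the Albanese embedding and $\delta\colon F\times F\to J^3(X)$, $(\ell,\ell')\mapsto a(\ell)-a(\ell')$, the difference map. The output is $W:=\delta_*\theta\in\CH_1(J^3(X))$, whose class I would compute by the projection formula: for $\xi,\eta\in\HH^1(J^3(X),\Z)$ and $\xi'=a^*\xi$, $\eta'=a^*\eta$ one has $\delta^*(\xi\wedge\eta)=\xi'\eta'\otimes1-\xi'\otimes\eta'+\eta'\otimes\xi'+1\otimes\xi'\eta'$, so that $\int_{J^3(X)}[W]\cup(\xi\wedge\eta)$ picks out $\langle[\theta]_{3,3},-\xi'\otimes\eta'+\eta'\otimes\xi'\rangle$ together with contributions from $[\theta]_{2,4}$ and $[\theta]_{4,2}$. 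Because $E$ is alternating, the first pairing equals $-2E(\xi',\eta')$, which is exactly $-2$ times the value of the minimal class $\tfrac{\Theta^{4}}{4!}$ (Poincaré dual to $\sum_i\lambda_i\wedge\mu_i$ for a symplectic basis) against $\xi\wedge\eta$; the remaining $(2,4)$- and $(4,2)$-contributions are classes of the form $a_*(\text{curve})$, hence represented by honest algebraic $1$-cycles that can be subtracted. This is the mechanism behind statement (2): $2\cdot\tfrac{\Theta^4}{4!}$ is represented by the symmetric algebraic $1$-cycle $W$ (symmetry inherited from $\theta=\theta^{t}$ and from $\delta=-\delta\circ\mathrm{swap}$), supported on the difference surface $\delta(F\times F)=F-F$.

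For the support and for passing from $2\cdot\tfrac{\Theta^4}{4!}$ to $\tfrac{\Theta^4}{4!}$ I would invoke the fine geometry of the intermediate Jacobian. A Pontryagin/Fourier computation starting from the class $[a(F)]$ is expected to give $\delta_*[F\times F]=6\Theta$ with $\delta$ generically $2$-to-$1$ onto its image, so that $F-F$ is a divisor of class $3\Theta$; combined with the fact (Clemens--Griffiths, Beauville) that the Abel--Jacobi images of the incidence curves $C_\ell=\{\ell'\in F:\ell'\cap\ell\neq\emptyset\}$ both sweep out and lie on translates of the theta divisor, this yields statement (1) (the minimal class supported on $F-F$ of class $3\Theta$) and the refinement in (2) to a symmetric $1$-cycle on an honest theta divisor.

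The main obstacle I anticipate is not the cohomological bookkeeping but the integral and support refinements: controlling the decomposable Künneth pieces $[\theta]_{2,4}$, $[\theta]_{4,2}$ so as to subtract them without disturbing the support, pinning down the exact multiplicities (in particular halving $2\cdot\tfrac{\Theta^4}{4!}$ to obtain the minimal class itself in (1)), and verifying the identities $[F-F]=3\Theta$ and the incidence-curve/theta-divisor incidences. All of these rest on the explicit Clemens--Griffiths description of the Fano surface and on Beauville's analysis of the singularity of $\Theta$ (whose tangent cone is the cone over $X$, which is what accounts for the factor $3$), and assembling them into the stated integral conclusions is where the real work lies.
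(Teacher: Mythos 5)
Your overall mechanism---pushing the symmetric cycle $\theta$ of Theorem \ref{thm main chow}(1) forward to $J^3(X)$ along maps built from the Abel--Jacobi embedding and evaluating against $\alpha\cup\beta$ via the identity $[\theta]\cdot(\hat\alpha\otimes\hat\beta)=\langle\alpha,\beta\rangle_X$---is exactly the paper's, and your computation with the difference map $\delta=\phi_-$ reproduces the paper's proof of statement (2). But your geometric input is swapped, and this is a genuine error: by Clemens--Griffiths (\S 13 of \cite{cg}, as quoted in the paper), $\phi_-$ has degree $6$ onto its image, which is a \emph{theta divisor}, consistent with your (correct) Pontryagin computation $\delta_*[F\times F]=6\Theta$; it is the sum map $\phi_+(u,v)=\phi(u)+\phi(v)$ that has degree $2$ onto its image $D_+$, a divisor of class $3\Theta$. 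So $F-F$ has class $\Theta$, not $3\Theta$, your ``generically $2$-to-$1$'' claim for $\delta$ is false, and what your construction on $F-F$ actually produces is statement (2)---twice the minimal class, represented by a $(-1)$-symmetric $1$-cycle on a theta divisor---while the divisor of class $3\Theta$ in statement (1) is the sum surface $F+F=\mathrm{im}(\phi_+)$, a map you never introduce.

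The halving problem you flag at the end is the second genuine gap, and it cannot be repaired inside your framework. The paper resolves it not through Beauville's analysis of the singularity of $\Theta$ but through the sum map: after normalizing $\tilde\theta=\theta-\theta_1\otimes\mathfrak{o}-\mathfrak{o}\otimes\theta_1$ with $\theta_1=(pr_1)_*\theta$ (your subtraction of the decomposable K\"unneth contributions, but performed at the level of cycles on $F\times F$, so supports are automatically preserved), one computes $(\phi_+)_*[\tilde\theta]\cup\alpha\cup\beta=2\langle\alpha,\beta\rangle_X$; and since $\tilde\theta$ is symmetric while $\phi_+$ is \emph{invariant} under the swap of the two factors, the pushforward is divisible by two as an algebraic cycle, $(\phi_+)_*\tilde\theta=2\eta$, whence $-[\eta]$ is the minimal class itself, algebraic and supported on $D_+$ of class $3\Theta$. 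This divisibility trick is structurally unavailable for $\phi_-$, which is swap-\emph{anti}-invariant---which is precisely why statement (2) only asserts \emph{twice} the minimal class on the theta divisor. To complete your proof you must run the same computation through $\phi_+$; no incidence-curve geometry on translates of $\Theta$ will halve the class integrally.
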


In the case of cubic fivefolds, our method can also relate rationality with the geometry of the intermediate Jacobian. The price we pay here is that we have to consider the rationality of $X$ and $F$ simultaneously.

\begin{thm}\label{thm cubic fivefold}
Let $X$ be a smooth cubic fivefold and let $F$ be its variety of lines. If both $X$ and $F$ admit a Chow theoretical decomposition of the diagonal, then the intermediate Jacobian $J^5(X)$ is a direct summand of the Jacobian of a (possibly reducible) curve (without respecting the principal polarizations).
\end{thm}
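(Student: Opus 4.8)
The plan is to reduce the statement to two separate inputs, one coming from the decomposition of the diagonal of $X$ and one from that of $F$. First note that for a smooth cubic fivefold the Jacobian ring computation gives $\HH^5(X,\Z)_{\mathrm{prim}}$ of Hodge type $(3,2)+(2,3)$ only, so $J^5(X)$ is a $21$-dimensional principally polarized abelian variety and $\HH^5(X,\Z)_{\mathrm{prim}}=\HH^5(X,\Z)_{\mathrm{tr}}$; there are no interfering Hodge classes in the middle degree. The universal line $P$, viewed through the transpose of its class, induces a morphism of Hodge structures $[P]^{*}\colon \HH^5(X,\Z)\to \HH^3(F,\Z)$ (it lowers degree by $2$), together with an adjoint map back. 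The key point is that $\HH^5(X)_{\mathrm{prim}}$ is captured by $\HH^3(F)$, not by $\HH^1(F)$; this is exactly why — in contrast to the Clemens--Griffiths picture for $d=3$, where $\HH^1$ of the Fano surface is already at curve level — one needs a second hypothesis to push $\HH^3(F)$ down to curves. The plan is therefore: (i) show $J^5(X)$ is a direct summand of $J^3(F)$ using the decomposition of $X$; (ii) show $J^3(F)$ is a direct summand of a Jacobian of a curve using the decomposition of $F$; and (iii) compose.

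\textbf{Step (i).} Using the universal generation of $\CH_1(X)$ by lines together with the Chow decomposition of the diagonal of $X$, I would produce, exactly as in Theorem \ref{thm main chow}, an identity in $\CH^5(X\times X)$ of the shape
\[
\Delta_X = X\times x + x\times X + \Sigma + (P\times P)_*\theta,
\]
with $\Sigma$ a symmetric decomposable cycle and $\theta\in\CH_3(F\times F)$ a symmetric $3$-cycle (the dimension count $(p\times p)^{*}$ raises dimension by $2$ and $(q\times q)_{*}$ preserves it, so $\theta\in\CH_{d-2}(F\times F)=\CH_3(F\times F)$). Acting on $\HH^5(X,\Z)_{\mathrm{tr}}$, the terms $X\times x$, $x\times X$ and the decomposable $\Sigma$ act as zero, since the transcendental part is orthogonal to algebraic classes under the cup-product pairing. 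Hence the identity on $\HH^5(X)_{\mathrm{tr}}$ factors through $(P\times P)_*\theta$, which by construction factors through $\HH^3(F)$ via $[P]^{*}$ and its adjoint. This realizes $\HH^5(X)_{\mathrm{prim}}$ as a direct summand of $\HH^3(F,\Q)$ as Hodge structures, the projector being given by the algebraic correspondences $\theta$ and $P$. Passing to intermediate Jacobians and invoking Poincaré reducibility, $J^5(X)$ is a direct summand of $J^3(F)$ up to isogeny (as abelian varieties, not tracking the polarizations). It is here that the Chow-theoretic — rather than merely cohomological — decomposition is used, so that the splitting is induced by genuine algebraic correspondences.

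\textbf{Step (ii).} Since $F$ admits a Chow decomposition of the diagonal, the standard Bloch--Srinivas argument gives $\HH^{p,0}(F)=0$ for $p>0$ and shows that $\HH^3(F,\Z)$ has coniveau $\geq 1$: there is a divisor $D\subset F$ with a resolution $\tilde D\to D$ such that the Gysin map $\HH^1(\tilde D,\Z)(-1)\to \HH^3(F,\Z)$ is surjective. Consequently $J^3(F)$ is a quotient of $\Alb(\tilde D)=J^1(\tilde D)$. By the Lefschetz hyperplane theorem, a general smooth curve $C\subset\tilde D$ cut out by ample divisors satisfies $\HH^1(\tilde D,\Z)\cong \HH^1(C,\Z)$, so $\Alb(\tilde D)\cong J(C)$ is a Jacobian of a curve. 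By Poincaré reducibility again, $J^3(F)$ is a direct summand (up to isogeny) of $J(C)$.

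\textbf{Conclusion and main obstacle.} Composing, and using that ``direct summand up to isogeny'' is transitive for abelian varieties, $J^5(X)$ is a direct summand of $J(C)$, which is the assertion (polarizations not respected). The routine parts are Step (ii), which is coniveau plus Lefschetz, and the Hodge-theoretic bookkeeping. The main obstacle I expect is Step (i): establishing the explicit Chow-theoretic decomposition of $\Delta_X$ with a clean $(P\times P)_*\theta$ term in codimension five, and in particular controlling the error terms (the decomposable $\Sigma$ and any cycles of intermediate coniveau) so that they act trivially on $\HH^5(X)_{\mathrm{tr}}$. This is precisely the place where the universal generation of $\CH_1(X)$ by lines must be leveraged to rewrite all one-cycles in terms of the universal family $P$, and where the degree count must be verified to land the relevant Künneth component of $\theta$ in $\HH^3(F)\otimes\HH^3(F)$.
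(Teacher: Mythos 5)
Your two-step architecture (split $J^5(X)$ off $J^3(F)$ using the diagonal of $X$, then split $J^3(F)$ off a Jacobian using the diagonal of $F$) is not the paper's, and its key step (i) fails as written. The analogue of Theorem \ref{thm main chow} for $d=5$ is not available: already for $d=4$ its proof needs the integral Hodge conjecture for the cubic fourfold, the nilpotence theorem, and the Hilbert-scheme geometry of Proposition \ref{prop alg triv}, all specific to $d=3,4$. More fundamentally, the general tool that the hypothesis ``$\CH_0(X)$ universally trivial'' actually buys is Theorem \ref{thm theta} with $r=1$ and parameter space $F$, and its conclusion only controls the pairing on $F^{3}\HH^5(X,\Z)$. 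For $d=3,4$ Proposition \ref{prop fil cubic} shows this filtration piece is the whole transcendental cohomology, but for a cubic fivefold it is not: $\HH^5(X)$ has coniveau $2$, and indeed under the very hypotheses of the theorem the paper produces correspondences from \emph{curves} spanning $\HH^5(X,\Q)$, so by adjunction against the nondegenerate form $\langle -,-\rangle_X$ one gets $F^3\HH^5(X,\Q)=0$ and the $r=1$ identity is vacuous. This is exactly why the paper uses the hypothesis on $F$ differently: universal triviality of $\CH_0(F)$ together with Theorem \ref{thm cubic generation} makes $\CH_0(F)\to\CH_1(X)$ universally surjective, hence $\CH_1(X)$ universally trivial, and then Corollary \ref{cor voisin generalized} with $r=2$, $d=5$ gives curves $C_i$, correspondences $\Gamma_i\in\CH_3(C_i\times X)$, integers $n_i$ and involutions-or-identities $\sigma_i$ with
\[
\sum n_i \langle \Gamma_i^*\alpha,\sigma_i^*\Gamma_i^*\beta \rangle_{C_i} = \langle \alpha,\beta \rangle_X
\]
on all of $\HH^5(X,\Z)$ — here the filtration condition $F^5\HH^5=\HH^5$ is automatic, since no correspondence from a point can act on odd-degree cohomology. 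The intermediate Jacobian $J^3(F)$ never appears.

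Second, even granting both of your steps, your conclusion is strictly weaker than the theorem. Poincar\'e reducibility yields only isogeny factors, whereas the theorem asserts an honest direct summand: in the paper, the displayed integral identity, against the unimodular intersection form on $\HH^5(X,\Z)$, says precisely that $\sum\sigma_i^\vee\circ\phi_i^\vee$ is a section of the Abel--Jacobi surjection $\phi:\bigoplus_i J(C_i)\to J^5(X)$, so that $\bigoplus_i J(C_i)\cong J^5(X)\times\ker\phi$. An isogeny splitting cannot be upgraded to an integral one without such an identity, and your rational Hodge-theoretic projectors lose exactly this integrality. Two smaller points: the isogeny version of your step (i) is in fact true unconditionally, since $\HH^5(X,\Q)$ is a direct factor of $\HH^3(F,\Q)$ by the cylinder-map results of \cite{shimada} — so even a repaired version of your route would not be using the decomposition of $\Delta_X$ where you think it does; and in step (ii) the Lefschetz theorem gives only an injection $\HH^1(\tilde D,\Z)\hookrightarrow\HH^1(C,\Z)$, so $\Alb(\tilde D)$ is a subabelian variety of $J(C)$, not isomorphic to it (harmless for the isogeny statement, but another place where the integral claim slips).
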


\subsection{Universal generation}
The main ingredient in the proof of the above results is the universal generation of the Chow group of 1-cycles on a cubic hypersurface by lines. This universal generation works over a general base field. Let $Y$ and $Z$ be smooth projective varieties defined over a field $K$. A cycle $\gamma\in \CH^r(Z_{K(Y)})$ is universally generating if a spreading $\Gamma\in \CH^r(Y\times Z)$ induces a universally surjective homomorphism $\Gamma_*: \CH_0(Y)\rightarrow \CH^r(Z)$. This means
\[
 (\Gamma_L)_*: \CH_0(Y_L)\longrightarrow \CH^r(Z_L)
\]
is surjective for all field extensions $L\supset K$. 

\begin{thm}\label{thm main universal generation}
Let $X\subset \PP^{d+1}_K$ be a smooth cubic hypersurface of dimension $d\geq 3$ and let $F$ be its variety of lines. Assume that $\CH_0(F)$ contains an element of degree one. Then the universal line $P\subset F\times X$ restricts to a universally generating 1-cycle $P|_{\eta_F}\in \CH_1(X_{K(F)})$. Namely, $P_*: \CH_0(F) \rightarrow \CH_1(X)$ is universally surjective.
\end{thm}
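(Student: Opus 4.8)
The plan is to deduce the stated universal surjectivity from a single geometric generation statement over an arbitrary field, and then to prove that statement by residuating a curve against the conic-bundle structure attached to a line.

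First I would make the formal reduction. By definition the assertion is that the spread-out correspondence $P_*=q_*p^*\colon \CH_0(F_L)\to\CH_1(X_L)$ is surjective for every field extension $L\supseteq K$. Since Chow groups commute with filtered colimits of fields, every class on $X_L$ is already defined over a finitely generated subextension, and the formation of $P_*$ is compatible with base change; hence it suffices to prove surjectivity over each $L$ individually. The hypothesis that $\CH_0(F)$ contains a degree-one class base-changes to each $F_L$, so this datum is available throughout. Finally, the image of $P_*$ is exactly the subgroup generated by classes of lines (allowing lines defined over finite extensions of $L$, pushed forward), so the goal becomes: every $1$-cycle on $X_L$ is a $\Z$-linear combination of classes of lines.

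The geometric engine is projection from a line. Over a finite extension where a line $\ell\subset X$ is available, projecting $\PP^{d+1}$ from $\ell$ exhibits $\beta\colon \mathrm{Bl}_\ell X\to \PP^{d-1}$ as a conic bundle, since a general $2$-plane $\Pi\supset\ell$ meets $X$ residually in a conic $Q$, with $\Pi\cap X=\ell\cup Q$. Two facts then drive the argument. First, the fibral conics are line-combinations: all plane-section curves of $X$ are rationally equivalent, because the Grassmannian of $2$-planes is rational, and a $2$-plane spanned by three coplanar lines of $X$ (a triangle, which exists on a general hyperplane section) realises this common class as $\ell_1+\ell_2+\ell_3$; subtracting $[\ell]$ shows $[Q]$ is a combination of lines. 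Second, $\CH_1(\PP^{d-1})=\Z$, so after subtracting a suitable multiple of a horizontal multisection a $1$-cycle becomes fibral. The natural multisection is the exceptional divisor $E$, whose horizontal curves are contracted into $\ell$ by $b\colon\mathrm{Bl}_\ell X\to X$; pushing the resulting splitting of $[\widetilde C]$ down to $X$ via $b_*$ then expresses $[C]$ as a combination of the conic fibers (lines) plus multiples of $[\ell]$, i.e. as a combination of lines.

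I expect the main obstacle to be twofold, and both parts are genuinely delicate. The first is that the relevant incidence is a double cover rather than a simple section: the exceptional divisor $E$ is a \emph{bisection} of $\beta$, and the reduction of an arbitrary plane section to a triangle, together with the need to pass to a finite extension to find $\ell$ and the triangle, only produce the desired identity up to a nonzero multiplicity (a factor $2$ from the bisection, or the field degree $[L':L]$). Thus a priori only some multiple of $[C]$ is visibly a combination of lines. The second is verifying that $\ker\beta_*$ is generated by fibral classes, a conic-bundle analogue of the projective bundle formula requiring a moving argument. It is precisely here that the degree-one hypothesis on $\CH_0(F)$ is essential: it supplies an index-one family of lines over the base field itself, which by a norm/index argument lets one cancel these multiplicities and conclude that $[C]$, with coefficient one, lies in the image of $P_*$.
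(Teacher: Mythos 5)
Your formal reductions (limit over finitely generated extensions, identification of the image of $P_*$ with the subgroup generated by pushforwards of lines defined over finite extensions of $L$) are correct, and the conic-bundle projection from a line is a natural idea. But the proposal has a genuine gap at its load-bearing step, which you yourself flag and then dispose of too quickly: the claim that $\ker\bigl(\beta_*:\CH_1(\mathrm{Bl}_\ell X)\to \CH_1(\PP^{d-1})\bigr)$ is generated by fibral classes is not a ``conic-bundle analogue of the projective bundle formula requiring a moving argument'' --- it is essentially equivalent to the theorem being proved. If it held, then $\CH_1(X_L)$ would be generated by lines meeting $\ell$, by $h^{d-1}$ and by $[\ell]$, which is (a refinement of) the conclusion. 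The known techniques for moving $1$-cycles on conic bundles (base change to the normalization of the image curve, then Springer's theorem when the multisection has odd relative degree, or Tsen's theorem when the base field is $C_1$) do not prove it: Tsen is unavailable because universal generation forces you to work over fields $L$ of arbitrary transcendence degree (e.g.\ $L=K(F)$ itself), and the Springer route only writes $[\widetilde C]$ as $r[\sigma]+(\text{fibral})$ for a \emph{section} $\sigma$ of completely uncontrolled class in $X$, with no induction measure to reduce $\sigma$ further. So the step you defer is the whole difficulty.

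Your second claimed repair is also misdirected: the norm/index argument cancels exactly the field-degree factors $[L':L]$ (the hypothesis that $\CH_0(F)$ has a degree-one element does give closed points of $F_L$ whose degrees have $\gcd$ equal to $1$), but pushing forward never divides by the bisection factor $2$; every relation your construction yields for an odd-degree curve is of the form $2[L':L]\cdot[C]\in\mathrm{Im}(P_*)+\Z h^{d-1}$, so the achievable multipliers have $\gcd$ equal to $2$. (The parity could in principle be patched by replacing $C$ with $C+(\Pi'\cap X)$, a cycle of even degree $e+3$ --- but only modulo the unproven kernel claim above.) Compare with what the paper actually does: it proves, over an arbitrary base, the relations of Proposition~\ref{prop relation}, in particular the odd-coefficient relation $(2e-3)\gamma_1+q_*p^*\gamma=a\,h^{d-1}$ coming from the \emph{secant lines} of $C$ (lines meeting $C$ in two points), established via the Hilbert-square geometry $\widetilde{\mathcal{X}^{[2]}}\to P_{\mathcal{X}}$ of Galkin--Shinder and Voisin (Lemmas~\ref{lem divisorial relation}--\ref{lem existence Gamma}), together with the bilinear relation $2e_2\gamma_1+2e_1\gamma_2+q_*p^*\gamma'=3e_1e_2\,h^{d-1}$. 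The degree-one class on $F$ enters not as a norm cancellation but by producing a degree-one cycle $\mathfrak{l}=q_*p^*\mathfrak{a}_0$ to feed into the bilinear relation; then elementary arithmetic ($\gcd(2e-3,2)=1$ for $\gamma$, and $15h^{d-1},\,7h^{d-1}\in q_*p^*\CH_0(F)$ with $\gcd(15,7)=1$) concludes. The odd coefficient $2e-3$ is precisely the input your bisection-based construction cannot manufacture, and it requires the secant-line geometry that is absent from your plan.
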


O.~Benoist pointed out that the condition that $F$ admits a 0-cycle of degree one cannot be removed since the above universal generation fails when $X$ is the universal cubic hypersurface over the generic point of the moduli space.

\subsection{Convention and Notation}

Let $X$ be a smooth projective variety of dimension $d$.
\begin{itemize}
\item $\HH^p(X)$ denotes $\HH^p(X,\Z)$ modulo torsion.

\item If $\alpha_i\in\HH^{k_i}(X)$, $1 \leq i \leq r$, such that $\sum k_i =2d$ then $\alpha_1\cdot \alpha_2 \cdots \cdot \alpha_r$ denotes the intersection number, namely the class $\alpha_1\cup\cdots\cup\alpha_r$ evaluated against the fundamental class $[X]$. In the special case of the middle cohomology, we write
\[
 \langle -,- \rangle_X: \HH^d(X) \times \HH^d(X) \rightarrow \Z,\quad (\alpha,\beta)\mapsto \alpha\cdot \beta.
\]

\item If $\Lambda$ is a Hodge structure, then we use $\mathrm{Hdg}(\Lambda)$ to denote the Hodge classes in $\Lambda$. In the special case $\Lambda = \HH^{2i}(X)$, we use $\mathrm{Hdg}^{2i}(X)$ to denote the Hodge classes in $\HH^{2i}(X)$. The transcendental cohomology $\HH^p(X)_{\mathrm{tr}}$ is the group of all elements $\alpha\in \HH^p(X)$ such that
\[
 \alpha\cdot \beta =0,\quad \text{for all }\beta \in \mathrm{Hdg}^{2d-p}(X).
\]
We use $\HH^{2i}(X)_{\mathrm{alg}} \subseteq \mathrm{Hdg}^{2i}(X)$ to be the subgroup of algebraic classes. The same notation is defined for $\Z$ and $\Q$ coefficients.

\item When $X$ is given an ample class $h\in\HH^2(X,\Z)$, we use $\HH^p(X)_{\mathrm{prim}}$ to denote the associated primitive cohomology.


\item For any $\alpha\in\HH^i(X)$ and $\beta\in\HH^j(Y)$ we use $\alpha\otimes\beta $ denote the element in $\HH^{i+j}(X\times Y)$ which is obtained via the K\"unneth decomposition. Similarly, for $\alpha\in \CH^i(X)$ and $\beta\in \CH^j(X)$, we use $\alpha\otimes\beta$ to denote the decomposable cycle $p_1^*\alpha\cdot p_2^*\beta \in \CH^{i+j}(X\times Y)$.

\item $\eta_X$ denotes the generic point of $X$.

\end{itemize}

\subsection*{Acknowledgement}
This project was started when I was visiting Morningside Center of Mathematics (Beijing); I thank Prof. Ye Tian for the invitation. I also thank Claire Voisin and Johan de Jong for many helpful email communications.

\section{On a filtration on cohomology}

In this section, we study a filtration on cohomology given by Definition \ref{defn filtration}. If a variety $X$ is rational, then $X$ can be obtained from the projective space $\PP^d$ by a successive blow ups and blow downs, with centers of dimension at most $d-2$. Hence the cohomology of $X$ comes from those centers. In this case the filtration corresponds to the dimensions of the centers.

In the first subsection, we give some general facts about the filtration including the behavior under a smooth blow-up (Proposition \ref{prop fil blow up}). In the second subsection, we treat an important feature of the filtration, namely that some algebraic cycle of $X$ over a function field gives rise to a bilinear pairing on certain piece of the filtration. 

Let $X$ be a smooth projective variety of dimension $d$.  Recall that we use $\HH^p(-)$ to denote the integral cohomology $\HH^p(-,\Z)$ modulo torsion.

\begin{defn}\label{defn filtration}
We define an increasing filtration on the middle cohomology $\HH^d(X,\Z)$ by
\begin{equation}\label{eq def fil}
F^i\HH^d(X,\Z) := \bigcap_{(Y,\Gamma)} \ker\{[\Gamma]^*: \HH^d(X,\Z) \rightarrow \HH^*(Y,\Z)\},\quad i\geq 1,
\end{equation}
where $(Y,\Gamma)$ runs through all smooth projective varieties $Y$ of dimension at most $d-i$ and all correspondences $\Gamma\in\CH_*(Y\times X)$. We can similarly define the corresponding notion on $\HH^d(X)$ and $\HH^d(X,\Q)$.
\end{defn}

The following lemma shows that we can actually put some extra restriction on the dimension of $\Gamma$ in the above definition.
\begin{lem}
The filtration $F^i\HH^d(X,\Z)$ can also be defined as in \eqref{eq def fil}, where $(Y,\Gamma)$ runs through all smooth projective varieties $Y$ of dimension at most $d-i$ and all correspondences $\Gamma\in\CH_l(Y\times X)$ with $l\leq d-1$.
\end{lem}

\begin{proof}
Let $F'^i\HH^d(X,\Z)$ be the filtration as defined in the lemma. Then it is clear that $F^i\subseteq F'^i$. Let $\alpha\in F'^i\HH^d(X,\Z)$ and let $Y$ be a smooth projective variety of dimension at most $d-i$ and $\Gamma\in \CH_l(Y\times X)$. We need to show that $\beta:=[\Gamma]^*\alpha = 0$ in $\HH^{d-2(l-d_Y)}(Y,\Z)$. For this, we may further assume that $\Gamma$ is represented by an irreducible subvariety of $Y\times X$ which dominates $Y$. Indeed, if this case is proved and we have a case where $\Gamma$ maps to a proper closed subvariety $j: Z\hookrightarrow Y$, then the action of $\Gamma$ factors through a resolution $Z'$ of $Z$. To be more precise, $[\Gamma]^* = j'_*\circ [\Gamma']^*$ where $\Gamma'$ is a correspondence between $Z'$ and $X$ which dominates $Z'$ and $j': Z'\rightarrow X$ is the resolution morphism $Z'\rightarrow Z$ followed by the inclusion $j$. By assumption, we have $[\Gamma']^*\alpha=0$ and it follows that $[\Gamma]^*\alpha = 0$.

Now we assume that $\Gamma$ dominates $Y$ and hence $l\geq d_Y$. If $l\leq d-1$, then by the definition of $F'^i$ we have $[\Gamma]^*\alpha = 0$. Assume that $l\geq d$. Note that $d':=d-2(l-d_Y) = d_Y -(l-d)-(l-d_Y) \leq d_Y$. The eqality holds only when $l=d=d_Y$ which forces $i=0$. Hence we have $d'<d_Y$. Thus we can take a general complete intersecion $Y'\subset Y$ of very ample divisors in $Y$ such that $\dim Y' = d'$. The Lefschetz hyperplane theorem implies that $\beta = 0$ if and only if $\beta|_{Y'} = 0 $. But we have
$\beta|_{Y'} = [\Gamma']^*\alpha$ where $\Gamma' = \Gamma|_{Y'\times X} \in \CH_{l'}(Y'\times X)$ with $l' = l-(d_Y-d') = d-(l-d_Y)\leq d$. If this is a strict inequality, then by assumption, we have $\beta|_{Y'}=0$ and hence $\beta = 0$. Otherwise, we have $l=d_Y\geq d$ which again forces $i=0$.
\end{proof}

From the above lemma, we see that $F^1\HH^d(X,\Z)$ consists of all $\alpha\in\HH^d(X,\Z)$ such that $f^*\alpha = 0$ in $\HH^d(Y,\Z)$ for all morphisms $f:Y\rightarrow X$ where $\dim Y \leq d-1$. Let $\alpha\in \HH^{d,0}(X)$ be the class represented by a global holomorphic $d$-form, then $f^*\alpha =0$ for all $f: Y\rightarrow X$ with $\dim Y \leq d-1$, since $\HH^{d,0}(Y)=0$ for dimension reasons.

\subsection{The filtration under a blow up}
In this subsection we take $X$ to be a smooth projective variety of dimension $d$. 

\begin{prop}
Let $X'$ be another smooth projective variety of dimension $d$ and $\Gamma\in\CH_{d}(X' \times X)$, then $[\Gamma]$ induces
\[
 [\Gamma]^*: F^k\HH^d(X,\Z) \longrightarrow F^k\HH^d(X',\Z).
\]
In particular, for any morphism $f:X'\rightarrow X$, the homomorphism $f^*$ on middle cohomology respects the filtration.
\end{prop}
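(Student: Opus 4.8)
The plan is to unwind the definition of the filtration and reduce the statement to the composition law for correspondences. Fix $\alpha\in F^k\HH^d(X,\Z)$. To prove that $[\Gamma]^*\alpha$ lies in $F^k\HH^d(X',\Z)$, I must show that $[\Xi]^*[\Gamma]^*\alpha=0$ for every pair $(Y,\Xi)$ appearing in Definition \ref{defn filtration}, i.e. for every smooth projective $Y$ with $\dim Y\leq d-k$ and every correspondence $\Xi\in\CH_l(Y\times X')$ with $l\leq d-1$.

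The key step is the functoriality of correspondence actions, which gives
\[
 [\Xi]^*\circ[\Gamma]^* = [\Gamma\circ\Xi]^*\colon \HH^d(X,\Z)\longrightarrow \HH^*(Y,\Z),
\]
where $\Gamma\circ\Xi\in\CH(Y\times X)$ is the composite cycle $p_{YX*}\big(p_{YX'}^*\Xi\cdot p_{X'X}^*\Gamma\big)$ formed on $Y\times X'\times X$. I would then do the dimension bookkeeping: on the $(\dim Y+2d)$-dimensional space $Y\times X'\times X$, the pullbacks $p_{YX'}^*\Xi$ and $p_{X'X}^*\Gamma$ have codimensions $\dim Y+d-l$ and $d$ respectively, so their intersection product has dimension $l$, and proper pushforward preserves dimension. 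Hence $\Gamma\circ\Xi\in\CH_l(Y\times X)$ with the \emph{same} $l\leq d-1$, while $\dim Y\leq d-k$ is unchanged. In other words $(Y,\Gamma\circ\Xi)$ is itself one of the test pairs defining $F^k\HH^d(X,\Z)$.

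Granting this, the proof concludes at once: since $\alpha\in F^k\HH^d(X,\Z)$ we get $[\Gamma\circ\Xi]^*\alpha=0$, hence $[\Xi]^*[\Gamma]^*\alpha=0$; as $(Y,\Xi)$ ranges over all admissible pairs, this is exactly the statement that $[\Gamma]^*\alpha\in F^k\HH^d(X',\Z)$. For the final assertion I would apply the first part to the graph $\Gamma_f\subset X'\times X$ of a morphism $f\colon X'\to X$: it is a cycle of dimension $d$, so $\Gamma_f\in\CH_d(X'\times X)$, and $[\Gamma_f]^*=f^*$ on cohomology, whence $f^*$ respects the filtration.

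The main obstacle is bookkeeping rather than conceptual: I must make the identity $[\Xi]^*\circ[\Gamma]^*=[\Gamma\circ\Xi]^*$ precise at the level of integral cohomology modulo torsion, fixing the convention for $[\cdot]^*$ (pullback along the projection to the source, pushforward along the projection to the target) and checking compatibility with the cycle class map so that the identity survives after killing torsion. Because Definition \ref{defn filtration} quantifies over all correspondences of dimension $l\leq d-1$ and the composite again has dimension exactly $l$, there is no slack lost in the dimension constraints and no need to track the target degree in $\HH^*(Y,\Z)$ beyond observing that the composite is again a correspondence action.
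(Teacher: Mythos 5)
Your proof is correct and is exactly the argument the paper intends: its proof consists of the single line ``this can be checked directly using composition of correspondences,'' and your dimension count showing $\Gamma\circ\Xi\in\CH_l(Y\times X)$ with the same $l\leq d-1$ and the same test variety $Y$ is precisely the direct check being invoked. The identity $[\Xi]^*\circ[\Gamma]^*=[\Gamma\circ\Xi]^*$ at the level of integral cohomology follows from compatibility of the cycle class map with pullback, cup product and proper pushforward, so no slack is lost and your argument, including the graph case for $f^*$, goes through as written.
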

\begin{proof}
This can be checked directly using composition of correspondences.
\end{proof}

\begin{prop}\label{prop fil blow up}
Let $\rho:\tilde{X}\rightarrow X$ be a blow up along a smooth center $Y\subset X$ of codimension $r$. Then
\[
\rho^*: F^i\HH^d(X,\Z) \rightarrow F^i\HH^d(\tilde{X},\Z)
\]
is an isomorphism for all $i\leq r$. In particular, the groups $F^1\HH^d(X,\Z)$ and $F^2\HH^d(X,\Z)$ are birational invariants, and they vanish if $X$ is rational. 
\end{prop}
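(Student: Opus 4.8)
The plan is to use the classical blow-up formula for cohomology together with the functoriality of the filtration established in the preceding proposition, and to analyze the extra contribution coming from the exceptional divisor degree by degree.

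First I would recall the blow-up formula: if $\rho:\tilde X\to X$ is the blow up along a smooth center $Y\subset X$ of codimension $r$, with exceptional divisor $E=\PP(N_{Y/X})\xrightarrow{\pi} Y$ and $j:E\hookrightarrow \tilde X$ the inclusion, then there is an isomorphism of Hodge structures
\[
 \HH^d(\tilde X,\Z) \;\cong\; \rho^*\HH^d(X,\Z) \,\oplus\, \bigoplus_{k=1}^{r-1} j_*\bigl(\xi^{k-1}\cdot \pi^*\HH^{d-2k}(Y,\Z)\bigr),
\]
where $\xi=c_1(\calO_E(1))$. Since $\dim Y = d-r$, the summand indexed by $k$ is a homomorphic image of $\HH^{d-2k}(Y,\Z)$, and the correspondence realizing the projection onto it is supported on $E\times X$ through $Y$, i.e. factors through a variety of dimension $d-r \le d-k$. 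I would first check that $\rho^*$ is injective on all of $\HH^d(X,\Z)$ (it is a direct summand) and that it maps $F^k\HH^d(X,\Z)$ into $F^k\HH^d(\tilde X,\Z)$ by the previous proposition. The content is therefore surjectivity of $\rho^*$ onto $F^k\HH^d(\tilde X,\Z)$ for $k\le r$.

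For surjectivity, take a class $\tilde\alpha\in F^k\HH^d(\tilde X,\Z)$ and decompose it via the blow-up formula as $\tilde\alpha = \rho^*\alpha + \sum_{l=1}^{r-1} j_*(\xi^{l-1}\cdot\pi^*\beta_l)$ with $\beta_l\in\HH^{d-2l}(Y,\Z)$. The goal is to show that each correction term $j_*(\xi^{l-1}\cdot\pi^*\beta_l)$ vanishes when $k\le r$. The key observation is that each such term lies in the image of a correspondence from $Y$ (or from $E$, pushed forward), and that the defining intersection condition for $F^k$ can be tested against these very correspondences. Concretely, I would build, for each $l$, a correspondence $\Gamma_l\in\CH_*(Y\times \tilde X)$ using $E$ and its tautological class so that $[\Gamma_l]^*\tilde\alpha$ recovers (up to an isomorphism and lower-order terms) the component $\beta_l$. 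Since $\dim Y=d-r\le d-k$ for $k\le r$ and $\Gamma_l$ has the required relative dimension bound ($l\le d-1$), the membership $\tilde\alpha\in F^k\HH^d(\tilde X,\Z)$ forces $[\Gamma_l]^*\tilde\alpha=0$, hence $\beta_l=0$. Thus $\tilde\alpha=\rho^*\alpha$, and one checks $\alpha\in F^k\HH^d(X,\Z)$ by composing with $\rho$, proving $\rho^*$ is an isomorphism onto $F^k$.

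The main obstacle I anticipate is the bookkeeping in the previous paragraph: producing correspondences $\Gamma_l$ that isolate the individual Künneth-type components $\beta_l$ cleanly, while controlling the dimension of the source variety and the relative dimension of the correspondence so that the definition of $F^k$ genuinely applies. The interplay of the cutoffs $\dim Y\le d-i$ and $l\le d-1$ in Definition \ref{defn filtration} must be matched exactly against the indices appearing in the blow-up formula; getting the inequality $k\le r$ to be precisely the range where all correction terms are killed is the crux. The final assertion that $F^1$ and $F^2$ are birational invariants then follows from the weak factorization theorem, since any birational map between smooth projective varieties factors as a sequence of blow ups and blow downs along smooth centers of codimension $r\ge 2$, and each such operation preserves $F^1$ and $F^2$ by the case $k\le 2\le r$. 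For rationality, $\PP^d$ has $F^1\HH^d(\PP^d,\Z)=F^2\HH^d(\PP^d,\Z)=0$ because $\HH^d(\PP^d,\Z)$ is either zero or generated by an algebraic class pulled back from a point, so the birational invariance gives the vanishing for any rational $X$.
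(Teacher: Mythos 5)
Your proposal is correct and follows essentially the same route as the paper: decompose $\tilde\alpha$ via the blow-up formula, test against the correspondences $(\pi,j)_*\xi^{l-1}\in\CH_{d-l}(Y\times\tilde X)$ (whose source $Y$ has dimension $d-r\le d-k$) to kill the $\beta_l$ by a triangular descending induction, and then verify $\alpha\in F^k\HH^d(X,\Z)$ by composing an arbitrary test correspondence with ${}^t\rho$. The only difference is that you spell out the ``in particular'' clause via weak factorization and the vanishing for $\PP^d$, which the paper leaves implicit; this is exactly the intended argument.
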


\begin{proof}
Consider the blow up diagram
\[
\xymatrix{
 E\ar[r]^j\ar[d]_\pi & \tilde{X}\ar[d]^\rho \\
 Y\ar[r]^{i_Y} &X
}
\]
where $E\cong \PP(\mathcal{N}_{Y/X})$ is the exceptional divisor. Then it is known that
\begin{equation}\label{eq blow up formula}
 \HH^d(X,\Z)\oplus \left(\bigoplus_{l=1}^{r-1}\HH^{d-2l}(Y,\Z) \right)\cong \HH^d(\tilde{X},\Z),\qquad (\alpha,\beta_1,\ldots,\beta_{r-1})\mapsto \rho^*\alpha +\sum_{l=1}^{r-1}j_*(\xi^{l-1}\cup \pi^*\beta_l),
\end{equation}
where $\xi\in\HH^2(E,\Z)$ is the class of the relative $\calO(1)$-bundle on $E$. Since $\rho_*\rho^*=\mathrm{id}$, we have
\begin{equation}\label{eq inj}
 \rho^*: F^i\HH^d(X,\Z)\hookrightarrow F^i\HH^d(\tilde{X},\Z).
\end{equation}
Now let 
\[
\tilde{\alpha}=\rho^*\alpha + \sum_{l=1}^{r-1}j_*(\xi^{l-1}\cup\pi^*\beta_l) 
\]
be an element in $F^i\HH^d(\tilde{X},\Z)$. Then let 
\[
\Gamma_{i'}:=(\pi,j)_*\xi^{i'-1} \in \CH_{d-i'}(Y\times \tilde{X}), \quad 1\leq i'\leq r-1,
\]
where $(\pi,j): E\rightarrow Y\times X$ is the natural morphism. Since $\dim Y = d-r \leq d-i$, we have
\[
0=\Gamma_1^*\tilde{\alpha} = \pi_*j^*\tilde{\alpha} = -\beta_{r-1}.
\]
Then we apply $\Gamma_2^*$ to $\tilde{\alpha}$ and get $\beta_{r-2}=0$. By induction, we get $\beta_l=0$ for all $l=1,\ldots, r-1$. To show that $\alpha\in F^i\HH^d(X,\Z)$, pick an arbitrary variety $Z$ of dimension $d-i$ and a correspondence $\Gamma\in\CH_{*}(Z\times X)$. Then 
\[
\Gamma^*\alpha = \Gamma^*\rho_*\tilde{\alpha}  = ({}^t\rho \circ \Gamma)^*\tilde{\alpha} =0
\]
and hence $\alpha\in F^i\HH^d(X,\Z)$. It follows that $\rho^*$ in equation \eqref{eq inj} is also surjective.
\end{proof}

\begin{prop}\label{prop fil cubic}
Let $X$ be a smooth cubic hypersurface of dimension $d=3$ or $4$. Then 
\[
F^1\HH^d(X,\Z)= F^2\HH^d(X,\Z)=0,\quad F^3\HH^d(X,\Z) = \HH^d(X,\Z)_{\mathrm{tr}}.
\]
\end{prop}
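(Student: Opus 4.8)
The plan is to use the Fano correspondence $P\subset F\times X$ as the main tool throughout, together with two classical inputs: the injectivity of $P^*$ on middle cohomology, and the elementary observation that the cohomology of a variety of dimension at most $d-3$ is forced into Tate-type degrees. Since the filtration is increasing, $F^1\HH^d(X,\Z)\subseteq F^2\HH^d(X,\Z)$, so I treat the vanishing of $F^2$, the easy case $d=3$ of $F^3$, and the two inclusions for $F^3$ when $d=4$ in turn.

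To get $F^1=F^2=0$ it suffices to prove $F^2=0$. I would recall that $P^*\colon \HH^d(X)\to\HH^{d-2}(F)$ is injective: for $d=3$ this is the Abel--Jacobi isomorphism $\HH^3(X)\xrightarrow{\sim}\HH^1(F)$ of Clemens--Griffiths \cite{cg}, and for $d=4$ it is Beauville--Donagi \cite{bd} (on the primitive part it is an isomorphism of Hodge structures, and one checks that the Pl\"ucker class accounts for $h^2$, so injectivity holds on all of $\HH^4(X)$). Next choose a smooth $(d-2)$-dimensional complete intersection $Y\subset F$ of ample divisors; by the Lefschetz hyperplane theorem the restriction $\HH^{d-2}(F)\hookrightarrow\HH^{d-2}(Y)$ is injective. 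The restricted cycle $P_Y:=P|_{Y\times X}\in\CH_{d-1}(Y\times X)$ is one of the pairs $(Y,\Gamma)$ allowed in the definition of $F^2$ (here $\dim Y=d-2$ and $l=d-1$), and its action factors as $P_Y^*=i_Y^*\circ P^*$, hence is injective. Therefore $F^2\HH^d(X,\Z)\subseteq\ker(P_Y^*)=0$.

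For $F^3$ the case $d=3$ is immediate from parity: any allowed pair has $\dim Y\le d-3=0$, so $\Gamma^*$ maps $\HH^3(X)$ into an odd-degree cohomology group of a finite set of points, which vanishes. Thus $F^3\HH^3(X,\Z)=\HH^3(X)$, and this already equals $\HH^3(X)_{\mathrm{tr}}$ since $\HH^3$ carries no Hodge classes. For $d=4$ I would prove the two inclusions separately. For $\HH^4(X,\Z)_{\mathrm{tr}}\subseteq F^3$, observe that for any allowed pair with $\dim Y\le 1$ the map $\Gamma^*$ lands in $\HH^0(Y)$ or $\HH^2(Y)$, the group $\HH^1(Y)$ being excluded by parity. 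Both targets are torsion-free and detected by integration against classes on $Y$, and the projection formula identifies each such evaluation with $\langle\alpha,d\rangle_X$, where $d\in\HH^4(X,\Q)$ is a K\"unneth component of $[\Gamma]$ of Hodge type $(2,2)$. A transcendental $\alpha$ is orthogonal to every rational Hodge class, so $\Gamma^*\alpha=0$ and $\alpha\in F^3$.

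For the reverse inclusion I would test against algebraic cycles directly: for a surface $W\subset X$, the class $[W]\in\CH_2(X)$, viewed as a correspondence from a point, is allowed in the definition of $F^3$ and satisfies $[W]^*\alpha=\langle\alpha,[W]\rangle_X\in\HH^0(\mathrm{pt})$. Hence $\alpha\in F^3$ forces $\alpha$ to be orthogonal to $\HH^4(X)_{\mathrm{alg}}$. The main obstacle is precisely that this only yields $F^3\subseteq(\HH^4(X)_{\mathrm{alg}})^\perp$, which is a priori larger than $\HH^4(X,\Z)_{\mathrm{tr}}=(\mathrm{Hdg}^4(X))^\perp$; closing the gap requires that every rational Hodge class on $X$ be algebraic. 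This is the Hodge conjecture for cubic fourfolds, a theorem of Zucker, and with it one has $(\HH^4(X)_{\mathrm{alg}})^\perp=(\mathrm{Hdg}^4(X))^\perp$, which completes the identification $F^3\HH^4(X,\Z)=\HH^4(X,\Z)_{\mathrm{tr}}$.
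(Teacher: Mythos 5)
Your proof is correct, and its first half takes a genuinely different route from the paper. To kill $F^1$ and $F^2$, the paper does not cut $F$ by hyperplane sections; instead it uses a canonical subvariety of the right dimension $d-2$, namely the locus $D_l\subseteq F$ of lines meeting a general line $l$, and gets injectivity of the correspondence action $\Phi_l$ on $\HH^d(X,\Z)$ not from the Clemens--Griffiths/Beauville--Donagi isomorphisms plus Lefschetz, but from the Prym--Tjurin-type intersection identity $\Phi_l(\alpha)\cdot\Phi_l(\beta)=-2\langle\alpha,\beta\rangle_X$ on primitive classes (\cite{cg,pt}) together with nondegeneracy of the intersection form. The two arguments buy different things: the paper's $D_l$ comes with an explicit pairing formula that fits the intersection-theoretic machinery used elsewhere in the paper, whereas your composition $P_Y^*=i_Y^*\circ P^*$ is more modular --- it works verbatim whenever the cylinder homomorphism is injective, at the cost of invoking \cite{cg}, \cite{bd} and the integral Lefschetz hyperplane theorem (your degree bookkeeping here is right: $\dim Y=d-2$, $P_Y\in\CH_{d-1}(Y\times X)$ is an allowed pair for $F^2$, and torsion in $\HH^{d-2}(Y,\Z)$ is harmless for injectivity). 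For $F^3$ your argument essentially coincides with the paper's: both reduce, by the parity count and the K\"unneth/projection-formula evaluation, to orthogonality against classes of algebraic surfaces in $X$. One point in your favor: you make explicit that closing the gap between $(\HH^4(X)_{\mathrm{alg}})^\perp$ and $(\mathrm{Hdg}^4(X))^\perp$ requires the Hodge conjecture for cubic fourfolds (Zucker), a step the paper's proof passes over silently when it asserts $F^3\HH^4(X,\Z)=\HH^4(X,\Z)_{\mathrm{tr}}$ directly from the surface condition (the paper only cites Voisin's integral Hodge theorem later, in a different proof); note that the rational statement suffices, since torsion-free orthogonal complements are insensitive to multiplying Hodge classes by integers.
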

\begin{proof}
We first note that the middle cohomology of $X$ is torsion free. Let $F$ be the variety of lines on $X$. Let $l\subset X$ be a general line on $X$ and let $D_l\subseteq F$ be the variety of lines that meet $l$. Then it is known that $D_l$ is smooth (see \cite[Lemma 10.5]{cg} for $d=3$; \cite[\S3 Lemme 1]{voisin 86} for $d=4$). The associated Abel--Jacobi homomorphism
\[
\Phi_l: \HH^d(X,\Z)\rightarrow \HH^{d-2}(D_l,\Z)
\]
is injective. This can be seen from the intersection property that
\[
\Phi_l(\alpha)\cdot \Phi_l(\beta) = [D_l]\cdot \hat\alpha \cdot \hat\beta = -2 \langle \alpha,\beta \rangle_X
\]
for all $\alpha,\beta\in \HH^d(X,\Z)_{\mathrm{prim}}$; see \cite{cg,pt}. As a consequence, we see that
\[
 F^2\HH^d(X,\Z) = F^1\HH^d(X,\Z) = 0.
\]
Let $\alpha\in F^3\HH^d(X,\Z)$. If $d=3$, then we have $\HH^5(X,\Z)=0$ and $\HH^7(X,\Z)=0$. As a consequence, we have $F^3\HH^3(X,\Z) = \HH^3(X,\Z)_{\mathrm{tr}} = \HH^3(X,\Z)$. If $d=4$, then for all curves $C$ and correspondences $\Sigma\in \CH_l(C\times X)$, $l\leq 3$, we have $\Sigma^*\alpha = 0$ in $\HH^{6-2l}(C,\Z)$. This condition is nontrivial only when $l=2$ or $3$. In either case, it is equivalent to $[Z]\cdot \alpha = 0$ for some surface $Z\subset X$. So we have $F^3\HH^4(X,\Z)= \HH^4(X,\Z)_{\mathrm{tr}}$.
\end{proof}

\subsection{Bilinear form associated to a cycle over a function field}

\begin{defn-lem}\label{defn-lem bilinear form}
Let $K$ be the function field of a variety of dimension $d-2r$. For any $\gamma\in\CH_r(X_K)$  we can define a $(-1)^d$-symmetric bilinear form
\[
 \langle - ,- \rangle_{\gamma}: F^{2r+1}\HH^d(X) \times F^{2r+1}\HH^d(X) \rightarrow \Z
\]
by $(\alpha,\beta)\mapsto \langle \Gamma^*\alpha,\Gamma^*\beta \rangle_Z$, where $Z$ is a model of $K$ and $\Gamma\subset Z\times X$ is a spreading of $\gamma$. 
\end{defn-lem}

\begin{proof}
The above bilinear form is independent of the choices made. Indeed, a different choice of $Z$ and $\Gamma$  gives rise to an action on $\HH^d(X)$ that differs by the action of a correspondence that factors through a variety of dimension at most $d-2r-1$. Hence the difference action is zero on $F^{2r+1}\HH^d(X)$ by definition.
\end{proof}

\begin{prop}\label{prop pairing torsion}
If $\gamma_1,\gamma_2\in\CH_r(X_K)$ such that $\gamma_1-\gamma_2$ is torsion in $\CH_r(X_K)$, then $\langle - , - \rangle_{\gamma_1} = \langle -,-\rangle_{\gamma_2}$.
\end{prop}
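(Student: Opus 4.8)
The plan is to exploit that, by Definition--Lemma \ref{defn-lem bilinear form}, each of the two forms is independent of the chosen spreading, and to reduce the claim to a single vanishing statement for the spreading of the torsion difference. Write $\tau := \gamma_1 - \gamma_2 \in \CH_r(X_K)$, so that $n\tau = 0$ for some integer $n\geq 1$. I would fix a smooth projective model $Z$ of $K$ (of dimension $d-2r$), a spreading $\Gamma_2\in\CH_{d-r}(Z\times X)$ of $\gamma_2$, and a spreading $T\in\CH_{d-r}(Z\times X)$ of $\tau$; then $\Gamma_1:=\Gamma_2+T$ is a spreading of $\gamma_1$. Correspondence actions on cohomology are additive in the correspondence, so $\Gamma_1^*=\Gamma_2^*+T^*$ as maps $\HH^d(X)\to\HH^{d-2r}(Z)$. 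Hence it suffices to prove that $T^*$ vanishes on $F^{2r+1}\HH^d(X)$: granting this, $\Gamma_1^*$ and $\Gamma_2^*$ agree on $F^{2r+1}\HH^d(X)$, and therefore $\langle\alpha,\beta\rangle_{\gamma_1}=\langle\Gamma_1^*\alpha,\Gamma_1^*\beta\rangle_Z=\langle\Gamma_2^*\alpha,\Gamma_2^*\beta\rangle_Z=\langle\alpha,\beta\rangle_{\gamma_2}$ for all $\alpha,\beta\in F^{2r+1}\HH^d(X)$.

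To control $T$, I would use the localization description $\CH_r(X_K)=\varinjlim_U \CH_{d-r}(U\times X)$, the colimit running over dense open subsets $U\subseteq Z$ with flat restriction maps. The relation $n\tau=0$ means precisely that $nT$ restricts to $0$ in $\CH_{d-r}(U'\times X)$ for some dense open $U'\subseteq Z$. Setting $W':=Z\setminus U'$, a proper closed subset with $\dim W'\leq d-2r-1$, the localization exact sequence for $U'\hookrightarrow Z$ then yields $nT=\iota_*W$ for some $W\in\CH_{d-r}(W'\times X)$, where $\iota\colon W'\times X\hookrightarrow Z\times X$.

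The final step is to see that such a correspondence acts trivially on $F^{2r+1}\HH^d(X)$, exactly in the spirit of the proof of Definition--Lemma \ref{defn-lem bilinear form}. Choosing a resolution $\nu\colon\tilde W'\to W'$ (we work over $\C$), we have $\dim\tilde W'\leq d-2r-1=d-(2r+1)$, and one lifts $W$ to a cycle $\tilde W\in\CH_{d-r}(\tilde W'\times X)$ with $(\nu\times\mathrm{id})_*\tilde W=W$ (take strict transforms of the components of $W$). The action of $nT$ then factors as $\HH^d(X)\xrightarrow{\tilde W^*}\HH^\bullet(\tilde W')\to\HH^\bullet(Z)$, the second arrow being the Gysin pushforward along $\tilde W'\to Z$. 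Since $\dim\tilde W'\leq d-(2r+1)$ and the correspondence degree satisfies $d-r\leq d-1$ (as $r\geq 1$ in the relevant range), the very definition of $F^{2r+1}\HH^d(X)$ in Definition \ref{defn filtration} gives $\tilde W^*\alpha=0$ for every $\alpha\in F^{2r+1}\HH^d(X)$. Consequently $n\,T^*\alpha=(nT)^*\alpha=0$, and since $\HH^{d-2r}(Z)$ denotes integral cohomology modulo torsion, and is thus torsion free, we conclude $T^*\alpha=0$.

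I expect the main obstacle to lie in the middle step: correctly passing from ``torsion in $\CH_r(X_K)$'' to ``$nT$ supported on $W'\times X$'' through the colimit/localization formalism, and then legitimately factoring the cohomological action through a smooth projective model $\tilde W'$ of the correct dimension. The delicate bookkeeping is the lifting of $W$ along the resolution together with the cycle- and correspondence-dimension count (in particular the harmless restriction $r\geq 1$ that makes $d-r\leq d-1$). Once the support and the factorization are established, the vanishing on the filtration is immediate from Definition \ref{defn filtration}, and the reduction in the first paragraph finishes the argument.
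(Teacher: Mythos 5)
Your proof is correct and follows essentially the same route as the paper: the paper likewise takes spreadings $\Gamma_1,\Gamma_2$, observes that $n(\Gamma_1-\Gamma_2)$ is supported over a proper closed subset of $Z$ and hence acts as zero on $F^{2r+1}\HH^d(X)$ by Definition \ref{defn filtration}, and then cancels $n$ using that $\HH^{d-2r}(Z)$ is taken modulo torsion. You merely make explicit the localization argument, the resolution $\tilde W'\to W'$, and the dimension bookkeeping that the paper leaves implicit, which is fine.
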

\begin{proof}
By definition $n(\gamma_1-\gamma_2)=0$ in $\CH_r(X_K)$. If we take some spreading $\Gamma_i\in\CH_{d-r}(Z\times X)$ of $\gamma_i$, we see that $n(\Gamma_1-\Gamma_2)$ is supported over a proper closed subset of $Z$. Thus the action of $n(\Gamma_1-\Gamma_2)$ factors through varieties of dimension at most $d-2r-1$. Hence, by definition, we have
\[
[ n(\Gamma_1-\Gamma_2)]^*\alpha = 0,\quad\text{for all }\alpha\in F^{2r+1}\HH^d(X).
\]
It follows that $\Gamma_1^*\alpha = \Gamma_2^*\alpha$ since $\HH^d(X)$, namely $\HH^d(X,\Z)$ modulo torsion, is torsion free. 
\end{proof}

\section{Universal generation}
In this section we give the definition of universal generation of algebraic cycles and discuss its basic properties. Then we discuss how universal generation is related to the decomposition of the diagonal.

We will eventually work over $\C$; but for the definitions, we assume that $X$ is a smooth projective variety of dimension $d$ over an arbitrary field $k$. Let $Z$ be a smooth projective variety with function field $K=k(Z)$. For any cycle $\gamma\in \CH_r(X_K)$, we can define
\begin{equation}\label{eq gamma on CH0}
 \gamma_*: \CH_0(Z)\rightarrow \CH_r(X),\qquad \tau\mapsto\Gamma_*\tau,
\end{equation}
where $\Gamma\in\CH_{d_Z+r}(Z\times X)$ is a spreading of $\gamma$. Namely, $\Gamma|_{\eta_Z\times X} = \gamma$ in $\CH_r(X_K)$. If $\Gamma'$ is another spreading of $\gamma$, then $\Gamma-\Gamma'$ is supported on $D\times X$ for some divisor $D$ of $Z$. It follows that $\Gamma_*\tau = \Gamma'_*\tau$ for all $\tau\in \CH_0(Z)$. Thus the homomorphism \eqref{eq gamma on CH0} only depends on the class $\gamma$. Furthermore, for every field extension $L\supset k$, we have the induced homomorphism
\begin{equation}\label{eq gamma on CH0 L}
(\gamma_L)_*: \CH_0(Z_L)\longrightarrow \CH_r(X_L),\qquad, \tau\mapsto (\Gamma_L)_*\tau.
\end{equation}
The above construction can be generalized to the following situation. Let $Z$ be the disjoint union of smooth projective varieties $Z_i$ with function field $K_i$ and $\gamma = \sum_i\gamma_i$, where $\gamma_i\in\CH_r(X_{K_i})$. Then we can again define
\begin{equation}\label{eq gamma on CH0 L 2}
 (\gamma_L)_*: \bigoplus_i \CH_0((Z_i)_L) \longrightarrow \CH_r(X_L).
\end{equation}

\begin{defn}\label{defn universal generation}
The cycle 
$$
\gamma = \sum_{i=1}^n \gamma_i \in \bigoplus_{i=1}^n \CH_r(X_{K_i}),
$$
is \textit{universally generating} if the natural homomorphism \eqref{eq gamma on CH0 L 2} is surjective for all field extensions $L\supset k$. We say that $\CH_r(X)$ is \textit{universally trivial} if the natural homomorphism $\CH_r(X) \rightarrow \CH_r(X_L)$ is an isomorphism for all field extensions $L\supset k$.
\end{defn}

For a $d$-dimensional variety $X$ with a $k$-point $x$, the universal triviality of $\CH_0(X)$ is equivalent to the existence of a Chow-theoretic decomposition of the diagonal of the form $\Delta_X = X\times x + \Gamma$, in $\CH_d(X\times X)$, where $\Gamma$ is supported on $D\times X$ for some divisor $D\subset X$. Indeed, the existence of a decomposition of the diagonal implies that $\CH_0(X)$ is universally generated by the point $x$. Conversely, if $\CH_0(X)$ is universally trivial in the above sense, then there exists $\gamma\in \CH_0(X)$ such that
\[
\delta_X:=\Delta_X|_{\eta_X\times X} = \gamma,\quad \text{in }\CH_0(X_K),
\]
where $K=k(X)$ is the function field of $X$. This implies that 
\[
\Delta_X = X\times\gamma + \Gamma,\quad \text{in }\CH_d(X\times X),
\]
where $\Gamma$ is supported on $D\times X$ for some divisor $D\subset X$. We apply the above correspondence to $x\in \CH_0(X)$ and get
\[
x = (\Delta_X)_*x = (X\times \gamma)_*x = \gamma.
\]
Hence we have $\Delta_X=X\times x +\Gamma$ in $\CH_d(X\times X)$, which is a decomposition of the diagonal. Thus the universal triviality of $\CH_0$ as in Definition \ref{defn universal generation} agrees with the usual definition; see for example \cite{voisin universal}.

\subsection{Universal generation from decomposition of diagonal}

\begin{prop}[Voisin \cite{voisin universal}]\label{prop voisin}
Let $X$ be a smooth projective variety of dimension $d$ over $\C$. Then $X$ admits a Chow-theoretical decomposition of the diagonal if and only if the following condition ($\ast$) holds.

($\ast$) There exist smooth projective varieties $Z_i$ of dimension $d-2$, correspondences $\Gamma_i\in \CH^{d-1}(Z_i\times X)$ and integers $n_i\in \Z$, $i=1,2,\ldots,r$, such that
\[
 \Delta_X  = \sum_{i=1}^r n_i \Gamma_i\circ{}^t\Gamma_i + X\times x + x\times X, \qquad \text{in }\CH^d(X\times X),
\]
where $x\in X$ is a closed point.
\end{prop}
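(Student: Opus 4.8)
The plan is to prove both implications by unwinding the definition of a Chow-theoretical decomposition of the diagonal (Definition \ref{defn decomp diag}) and matching it to condition ($\ast$). First I would prove the easy direction, namely that ($\ast$) implies the decomposition of the diagonal. Given the expression $\Delta_X = \sum_i n_i \Gamma_i \circ {}^t\Gamma_i + X\times x + x\times X$, I observe that each composite $\Gamma_i \circ {}^t\Gamma_i$ is a correspondence that factors through $Z_i$, a variety of dimension $d-2 < d$. The cycle ${}^t\Gamma_i \in \CH_*(X \times Z_i)$ is supported over $X$, so by the projection onto the first factor its image $\Gamma_i \circ {}^t\Gamma_i$ is supported on $D_i \times X$ for a divisor $D_i \subset X$; more precisely, since $Z_i$ has dimension $d-2$, the projection of the support of ${}^t\Gamma_i$ to the first copy of $X$ cannot be dominant, giving a proper subvariety whose closure provides the required divisor. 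The extra term $x\times X$ is already supported on a divisor (a point times $X$, which lies in $\{x\}\times X$), and $X\times x$ is the leading term. Thus $Z := x\times X + \sum_i n_i \Gamma_i \circ {}^t\Gamma_i$ is supported on $D\times X$ for a suitable divisor $D$, yielding exactly the decomposition $\Delta_X = X\times x + Z$.

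For the converse, I would start from an arbitrary Chow-theoretical decomposition $\Delta_X = X\times x + Z$, with $Z$ supported on $D\times X$. The goal is to rewrite the support-on-a-divisor term as a symmetric combination of composites $\Gamma_i \circ {}^t\Gamma_i$ plus the symmetric boundary terms $X\times x + x\times X$. The key technical input is that a cycle supported on $D\times X$ can, after resolving singularities of $D$, be pushed forward from $\tilde{D}\times X$ where $\tilde{D}$ has dimension $d-1$; one then uses the symmetry of the diagonal to replace $Z$ by its symmetrization $\tfrac{1}{2}(Z + {}^tZ)$ at the cost of introducing the term $x\times X$, and rewrites the resulting symmetric cycle supported on both $D\times X$ and $X\times D'$ as a sum of terms each factoring through a $(d-2)$-dimensional variety. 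The reduction from dimension $d-1$ to $d-2$ is achieved by observing that a symmetric cycle supported on $(D\times X)\cap(X\times D')$ factors through $D\cap$-type loci, or by a bilinear-algebra argument expressing a symmetric integral bilinear form via $\sum_i n_i v_i \otimes v_i$ with $v_i$ supported appropriately.

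The main obstacle I anticipate is the passage from a cycle merely \emph{supported} on a divisor $D\times X$ to one written as an \emph{integral symmetric} combination $\sum_i n_i \Gamma_i \circ {}^t\Gamma_i$ with each $\Gamma_i$ living on a smooth $(d-2)$-dimensional $Z_i$. Two points require care: controlling the dimension drop to exactly $d-2$ (rather than $d-1$), which uses that symmetrization forces the cycle to be supported over the intersection of two divisors and hence over a codimension-two locus; and ensuring the coefficients remain integral while achieving symmetry, which is the step where the integral diagonalization of symmetric forms and the flexibility of choosing the $Z_i$ as disjoint resolutions both enter. Throughout, I would use that correspondences factoring through low-dimensional varieties act trivially on the relevant pieces of cohomology, consistent with the filtration machinery of Section 2, to verify that the cohomological shadow of the rewritten decomposition agrees with $\Delta_X$.
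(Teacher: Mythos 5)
Your easy direction is fine and matches the paper's (one-line) treatment. The converse, however, has a genuine gap at its central step, in two respects. First, the symmetrization $\tfrac{1}{2}(Z+{}^tZ)$ is not an integral cycle, and integrality is the whole point of the statement: the paper never divides by $2$. Its actual mechanism is multiplicative, not additive: from $\Delta_X - X\times x = Z$ one composes the decomposition with its transpose, using $(\Delta_X - X\times x)\circ(\Delta_X - x\times X) = \Delta_X - X\times x - x\times X$, which replaces the one-sided cycle $Z$ by the integrally defined symmetric cycle $Z\circ{}^tZ$. Second, even granting a symmetric representative, your claim that symmetry forces the support onto the intersection of the two divisorial loci is false: the support of $Z+{}^tZ$ lies in the \emph{union} $(D\times X)\cup(X\times D)$, not in $D\times D$, and rational equivalence does not preserve supports, so no drop to dimension $d-2$ follows this way. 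In the paper the drop to $d-2$ comes from the composition itself: writing $Z=\sum_i \Gamma'_i\circ k_i^*$ with $k_i: D_i\hookrightarrow X$ (after blowing up $X$ so that $D=\cup D_i$ is a global normal crossing divisor --- note one must blow up $X$, not merely resolve $D$, and then transport the decomposition, which your sketch does not address), one gets $Z\circ{}^tZ = \sum_{i,j}\Gamma'_i\circ k_i^*\circ k_{j,*}\circ{}^t\Gamma'_j$, and the inner term $k_i^*\circ k_{j,*}$ is a cycle of dimension $d-2$: for $i\neq j$ it is supported on $D_i\cap D_j$, and for $i=j$ it is given by intersecting with the class $k_i^*D_i$, whose components $Z'_{i,l}$ are divisors in $D_i$ that one then resolves to obtain the smooth $Z_i$'s.

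Your instinct about a ``bilinear-algebra argument'' is the right one for the last step, but it only has something to act on once the composition has been performed: the paper diagonalizes the off-diagonal terms integrally via the polarization identity $\Gamma_1\circ{}^t\Gamma_2+\Gamma_2\circ{}^t\Gamma_1 = (\Gamma_1+\Gamma_2)\circ{}^t(\Gamma_1+\Gamma_2)-\Gamma_1\circ{}^t\Gamma_1-\Gamma_2\circ{}^t\Gamma_2$, applied to the restrictions $\Gamma'_{i,j}$ of $\Gamma'_i$ to $D_i\cap D_j$. Finally, your closing appeal to the filtration machinery of Section~2 to check the ``cohomological shadow'' is beside the point: the proposition asserts an equality in $\CH_d(X\times X)$, and the argument above produces it directly at the level of Chow groups.
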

\begin{proof}
The proof is the same as that of \cite[Theorem 3.1]{voisin universal}, with homological equivalence replaced by rational equivalence. We only sketch the main steps here. It is clear that the condition ($\ast$) is a special form of Chow-theoretical decomposition of the diagonal. For the converse we assume that $X$ has a Chow-theoretical decomposition of the diagonal
\[
 \Delta_X - X\times x = Z, \qquad \text{in }\CH_d(X\times X),
\]
where $Z$ is supported on $D\times X$ for some divisor $D\subset X$. We may relace $X$ by a blow-up and assume that $D=\cup D_i$ is a global normal crossing divisor. Let $k_i: D_i\longrightarrow X$ be the inclusion map. Then there exist $\Gamma'_i\in \CH_{d}(D_i\times X)$ such that
\[
 \Delta_X - X\times x = \sum (k_i,Id_X)_*\Gamma'_i = \sum \Gamma'_i \circ k_i^*.
\]
Composing the above equation with its transpose, we get
\begin{align*}
\Delta_X -X\times x - x\times X & = (\Delta_X - X\times x)\circ(\Delta_X - x\times X)\\
 & = \sum_{i,j} \Gamma'_i\circ k_i^*\circ k_{j,*}\circ {}^t\Gamma'_j.
\end{align*}
For each $i$, assume that $k_i^* D_i = \sum_l n_{i,l}Z'_{i,l}$, where $Z'_{i,l}\subset D_i$ are irreducible divisors. Let $Z_{i,l}$ be a resolution of $Z'_{i,l}$ and let $\Gamma_{i,l}\in \CH_{d-1}(Z_{i,l}\times X)$ be the restriction of $\Gamma'_i$ to $Z_{i,l}$. then we have
\[
 \Gamma'_i\circ k_i^*\circ k_{i,*} \circ {}^t\Gamma'_i = \sum_l n_{i,l} \Gamma_{i,l}\circ {}^t\Gamma_{i,l}.
\]
For $i\neq j$, we set $Z_{\{i,j\}}$ to be the intersection of $D_i$ and $D_j$. Let $\Gamma'_{i,j}\in \CH_{d-1}(Z_{\{i,j\}} \times X)$ be the restriction of $\Gamma'_i$ to $Z_{\{i,j\}}$. Thus we have
\[
\Gamma'_i\circ k_i^*\circ k_{j,*}\circ {}^t\Gamma'_j = \Gamma'_{i,j}\circ {}^t\Gamma'_{j,i}.
\]
It follows that
\[
 \Gamma'_i\circ k_i^*\circ k_{j,*}\circ {}^t\Gamma'_j + \Gamma'_j\circ k_j^*\circ k_{i,*}\circ {}^t\Gamma'_i = (\Gamma'_{i,j}+\Gamma'_{j,i}) \circ ({}^t\Gamma'_{i,j}+ {}^t\Gamma'_{j,i}) - \Gamma'_{i,j}\circ {}^t  \Gamma'_{i,j} - \Gamma'_{j,i}\circ {}^t  \Gamma'_{j,i}
\]
Hence $\Delta_X - X\times x - x \times X$ is of the given form.
\end{proof}




The above result of Voisin is sufficient for our main application to cubic threefolds and cubic fourfolds. However, there is a more general version which we need for a later application to cubic fivefolds.

\begin{prop}\label{prop voisin generalized}
Let $X$ be a smooth projective variety of dimension $d$ over $\C$. Let $r\geq 1$ be an integer such that $2r\leq d$. Assume that $\CH_i(X)$ is universally trivial for $i=0,1,\ldots,r-1$. Then we have the following higher decomposition of diagonal
\[
\Delta_X = (X\times x + x\times X)+ \Sigma + \Gamma,\quad \in \CH_d(X\times X),
\]
which satisfies the following conditions.
\begin{enumerate}
\item The cycle 
$$\Sigma\in \bigoplus_{i<r}\left( \CH^i(X)\otimes \CH_i(X) \oplus \CH_i(X)\otimes\CH^i(X) \right)$$ 
is a decomposable symmetric $d$-cycle, namely a linear combination of cycles of the form $\gamma\otimes \gamma' + \gamma'\otimes \gamma$.

\item There are smooth projective varieties $Z_i$ of dimension $d-2r$, correspondences $\Gamma_i\in \CH_{d-r}(Z_i\times X)$ and integers $n_i$ such that
\[
 \Gamma = \sum_i n_i \Gamma_i\circ \sigma_i \circ{}^t\Gamma_i, \quad \text{in }\CH_d(X\times X),
\]
where $\sigma_i: Z_i \rightarrow Z_i$ is a (possibly trivial) involution. If $r=1$, then all $\sigma_i$ can be taken to be identity.
\end{enumerate}
\end{prop}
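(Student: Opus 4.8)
The plan is to argue by induction on $r$, the case $r=1$ being exactly Proposition \ref{prop voisin} (there all $\sigma_i$ are the identity and $\Sigma=0$). So assume the statement is known when only $\CH_0(X),\dots,\CH_{r-2}(X)$ are universally trivial, and that in addition $\CH_{r-1}(X)$ is universally trivial. Applying the inductive hypothesis at level $r-1$ gives
\[
\Delta_X = (X\times x + x\times X) + \Sigma' + \sum_i n_i\, \Gamma_i\circ\sigma_i\circ {}^t\Gamma_i,
\]
with $\dim Z_i = d-2(r-1) = d-2r+2$, $\Gamma_i\in\CH_{d-r+1}(Z_i\times X)$ and each $\sigma_i$ the identity or an involution. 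It suffices to treat a single symmetric term $T=\Gamma\circ\sigma\circ{}^t\Gamma$ (write $Z=Z_i$, $\Gamma=\Gamma_i$, $\sigma=\sigma_i$) and to show that, modulo symmetric decomposable cycles, it is a $\Z$-combination of terms $\Gamma'\circ\sigma'\circ{}^t\Gamma'$ with $\dim Z'=d-2r$, $\Gamma'\in\CH_{d-r}(Z'\times X)$ and $\sigma'$ an involution; summing over $i$ and collecting the decomposable contributions into $\Sigma$ then finishes the induction. Throughout I work only on the auxiliary varieties $Z$, so $X$ is never modified and no birational invariance of the hypotheses on $X$ is needed.

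First I would peel off a constant part of $\Gamma$. Restricting $\Gamma$ to the generic point of $Z$ produces a class $\gamma=\Gamma|_{\eta_Z\times X}\in\CH_{r-1}(X_{K(Z)})$, and since $\CH_{r-1}(X)$ is universally trivial the map $\CH_{r-1}(X)\to\CH_{r-1}(X_{K(Z)})$ is an isomorphism, so $\gamma=\gamma_0|_{K(Z)}$ for some $\gamma_0\in\CH_{r-1}(X)$. Hence $\Gamma=\Lambda+\Gamma''$, where $\Lambda=[Z]\times\gamma_0$ is a constant cycle and $\Gamma''$ restricts to $0$ over $\eta_Z$, i.e. is supported on $D\times X$ for a closed $D\subsetneq Z$ with $\dim D\le d-2r+1$. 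Expanding $T=(\Lambda+\Gamma'')\circ\sigma\circ{}^t(\Lambda+\Gamma'')$ gives four terms. Since $\Lambda=p_X^*\gamma_0$ as a correspondence, any composition with $\Lambda$ at an end has the shape $\delta\otimes\gamma_0$ and is decomposable, so the three terms involving $\Lambda$ are decomposable and their sum is symmetric; I absorb these into $\Sigma$. This reduces us to the single term $\Gamma''\circ\sigma\circ{}^t\Gamma''$.

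Next comes the squaring step. After blowing up the auxiliary variety $Z$ I may assume $D$ is a simple normal crossing divisor, and write $\Gamma''=\tilde\Gamma\circ k^*$ with $k:\tilde D\to Z$ the inclusion of (a resolution of) the support and $\tilde\Gamma\in\CH_{d-r+1}(\tilde D\times X)$. Then ${}^t\Gamma''=k_*\circ{}^t\tilde\Gamma$ and
\[
\Gamma''\circ\sigma\circ{}^t\Gamma'' = \tilde\Gamma\circ(k^*\circ\sigma\circ k_*)\circ{}^t\tilde\Gamma.
\]
The middle correspondence $\Theta:=k^*\circ\sigma\circ k_*$ on $\tilde D\times\tilde D$ is symmetric, since ${}^t\Theta=k^*\circ{}^t\sigma\circ k_*=\Theta$, and it is supported on the $\sigma$-twisted fibre product $\{(\tilde d_1,\tilde d_2): k(\tilde d_1)=\sigma(k(\tilde d_2))\}$, which has dimension at most $2(d-2r+1)-(d-2r+2)=d-2r$ and carries the swap involution $\iota:(\tilde d_1,\tilde d_2)\mapsto(\tilde d_2,\tilde d_1)$ (well defined because $\sigma$ is an involution). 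Choosing an $\iota$-equivariant resolution and decomposing $\Theta=\sum_l m_l\,(a_l,b_l)_*[W_l]$ with $b_l=a_l\circ\iota_l$, $\dim W_l=d-2r$, and setting $\Gamma_l:=\tilde\Gamma\circ(a_l)_*\in\CH_{d-r}(W_l\times X)$, I obtain $\tilde\Gamma\circ(a_l,b_l)_*[W_l]\circ{}^t\tilde\Gamma=\Gamma_l\circ\iota_l\circ{}^t\Gamma_l$, using $(a_l,b_l)_*[W_l]=b_{l*}\circ a_l^*$ and $\tilde\Gamma\circ b_{l*}=\Gamma_l\circ\iota_l$. This is precisely a term of the required shape, with $\dim W_l=d-2r$, $\Gamma_l\in\CH_{d-r}(W_l\times X)$ and $\iota_l$ an involution; when $r=1$ the middle $\sigma$ is the identity and the construction recovers the unadorned self-compositions of Proposition \ref{prop voisin}.

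The main obstacle is this squaring step, specifically making the decomposition of the symmetric middle correspondence $\Theta$ into the clean form $\sum_l m_l\,(a_l,b_l)_*[W_l]$ rigorous and compatible with $\iota$: one must reduce to normal crossings on $Z$, control the intersection multiplicities appearing in $k^*\circ\sigma\circ k_*$, and resolve the twisted fibre product equivariantly, separating its components into those fixed by $\iota$ (which yield genuine involutions) and those interchanged in pairs (handled on the disjoint union $W_l\sqcup\iota(W_l)$ with the swap involution, exactly as the off-diagonal terms are treated in Proposition \ref{prop voisin}). The remaining points — independence of $\gamma$ from the chosen spreading, preservation of the symmetric and decomposable structures at each stage, and the dimension bookkeeping recorded above — are routine consequences of Proposition \ref{prop voisin} and the composition formula for correspondences.
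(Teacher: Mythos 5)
Your overall architecture is a plausible reorganization of the paper's argument: the paper does not induct on $r$ but peels off all decomposable layers at once from a single Chow decomposition (obtaining $\Delta_X = X\times x+\Gamma'_1+\cdots+\Gamma'_{r-1}+\Gamma'''$ with $\Gamma'''$ supported on a codimension-$r$ subset of $X$ itself) and then symmetrizes once via $\Delta_X=\Delta_X\circ{}^t\Delta_X$, analyzing $f_i^*\circ f_{j,*}\in\CH_{d-2r}(D_i\times D_j)$ for resolutions $f_i:D_i\to X$ of the components. Your peeling step (constant part $\Lambda=[Z]\times\gamma_0$ via universal triviality of $\CH_{r-1}(X)$) and your treatment of the cross terms (whose sum is indeed symmetric, since ${}^t(\Lambda\circ\sigma\circ{}^t\Gamma'')=\Gamma''\circ\sigma\circ{}^t\Lambda$) are sound. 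The genuine gap is in your squaring step, and the one concrete justification you offer for it is false: the $\sigma$-twisted fibre product $\{(\tilde d_1,\tilde d_2):k(\tilde d_1)=\sigma(k(\tilde d_2))\}$ does \emph{not} have dimension at most $d-2r$. Already for $\sigma=\mathrm{id}$ --- the typical case, e.g.\ for every term produced by your base case --- it contains the diagonal copy of $\tilde D$, of dimension $d-2r+1$; and once $k:\tilde D\to Z$ is a resolution rather than an embedding, $\tilde D\times_Z\tilde D$ contains $E'\times_Z E'$ for the exceptional locus $E'$, which can be far larger still. Only the \emph{class} $\Theta=k^*\circ\sigma\circ k_*$ lives in $\CH_{d-2r}$; representing it on these excess components is a refined/self-intersection computation, not something obtained by choosing an $\iota$-equivariant resolution of the support. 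This is exactly where the paper does its real work: in Proposition \ref{prop voisin} the diagonal contribution is handled by the self-intersection formula $\Gamma'_i\circ k_i^*\circ k_{i,*}\circ{}^t\Gamma'_i=\sum_l n_{i,l}\,\Gamma_{i,l}\circ{}^t\Gamma_{i,l}$ with $k_i^*D_i=\sum_l n_{i,l}Z'_{i,l}$, and in the present proposition by writing the cycles $f_i^*\circ f_{j,*}$ explicitly and sorting the outcome into the three term types. Your ``main obstacle'' paragraph defers precisely this content, so the proof is incomplete at its decisive point.

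A second unaddressed problem: ``after blowing up $Z$ I may assume $D$ is simple normal crossing'' is not free. Replacing $Z$ by a blow-up $\rho:\tilde Z\to Z$ forces you to rewrite the term as $\tilde\Gamma''\circ(\rho^*\circ\sigma\circ\rho_*)\circ{}^t\tilde\Gamma''$, and $\rho^*\circ\sigma\circ\rho_*$ is not the graph of an involution of $\tilde Z$: even for $\sigma=\mathrm{id}$ it is the diagonal plus a symmetric correction supported over the exceptional locus, which reproduces the same excess-dimension problem one level down. It is worth noticing that if both of these moves worked as written, your induction would let you take every $\sigma_i=\mathrm{id}$ for all $r$ --- strictly stronger than the proposition --- whereas the paper's closing Remark locates the non-diagonalized terms precisely in the non-normal (double) loci of the supports, which is what your snc-plus-resolution claim purports to eliminate. (Your observation that only the auxiliary $Z$'s, never $X$, need modifying is correct and genuinely interesting, and is why the paper's obstruction to blowing up does not literally apply to you; but the re-basing corrections above mean it does not buy you the clean decomposition of $\Theta$ either.) That tension should itself have been a warning that the excess components were being swept under the rug.
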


The proof requires the following lemma concerning symmetric cycles.

\begin{lem}\label{lem symmetric}
Let $X$ be a smooth projective variety. The following statements are true.
\begin{enumerate}[(i)]
\item Let $\Gamma_1$ and $\Gamma_2$ be two symmetric cycles on $X\times X$, then $\Gamma_1\cdot \Gamma_2$ is represented by a symmetric cycle.
\item Let $Z$ be another smooth projective varieties and let $\Gamma\in \CH^p(Z\times X)$ be a correspondence. If a self correspondence $\Sigma \in \CH^q(X\times X)$ is represented by a symmetric cycle, then the self correspondence ${}^t\Gamma\circ\Sigma\circ \Gamma \in \CH^*(Z\times Z)$ is also represented by a symmetric cycle.
\end{enumerate}
\end{lem}

\begin{proof}
Let $d=\dim X$ and let $\rho_X: \widetilde{X\times X} \rightarrow X\times X$ be the blow up along the diagonal. Let $\sigma_X: \widetilde{X\times X} \rightarrow X^{[2]}$ be ramified degree 2 cover of the Hilbert square of $X$. Let $\mu_X=\sigma_{X,*}\rho_X^* \in \CH^{2d}(X\times X \times X^{[2]})$ be the associated correspondence. Let $\iota_X:X\rightarrow X\times X$ be the diagonal embedding. Our first observation is that a cycle class $\gamma\in \CH_r(X\times X)$ is represented by a symmetric cycle if and only if $\gamma = \iota_{X,*}\alpha + \mu_X^*\beta$ for some $\alpha\in \CH_r(X)$ and $\beta\in \CH_r(X^{[2]})$. This can be seen as follows. If $\gamma = \iota_{X,*}\alpha + \mu_X^*\beta$ then it is clear that $\gamma$ is represented by a symmetric cycle. Conversely, assume that $\gamma$ is represented by a symmetric cycle $\Gamma$ on $X\times X$. Let $\Gamma_0$ be the restriction of $\Gamma$ to $X\times X\backslash \Delta_X$. Then there exists a cycle $\Gamma'_0$ on $X^{(2)}\backslash\Delta_X$ whose pull-back to $X\times X\backslash \Delta_X$ is $\Gamma_0$. Note that $X^{(2)}\backslash \Delta_X\subset X^{[2]}$ is canonically an open subvariety. Let $\overline{\Gamma'}_0$ be the closure of $\Gamma'_0$ in $X^{[2]}$. Let $\beta\in \CH_r(X^{[2]})$ be the class of $\overline{\Gamma'}_0$. By construction $\gamma$ and $\mu_X^*\beta$ agrees on $X\times X\backslash \Delta_X$. Hence by the localization sequence, there exists $\alpha\in \CH_r(X)$ such that $\gamma = \iota_{X,*}\alpha + \mu_X^*\beta$.

To prove (i), we write $\Gamma_i = \iota_{X,*}\alpha_i + \mu_X^*\beta_i$, $i=1,2$. Then we have
\begin{align*}
\Gamma_1\cdot \Gamma_2 & = (\iota_{X,*}\alpha_1 + \mu_X^*\beta_1) \cdot (\iota_{X,*}\alpha_2 + \mu_X^*\beta_2)\\
& = \mu_X^*\beta_1 \cdot \mu_X^*\beta_2 \qquad \mod \;\; \iota_{X,*}\CH_*(X)\\
& = \rho_{X,*}\sigma_X^*\beta_1 \cdot \rho_{X,*}\sigma_X^*\beta_2 \qquad \mod \;\; \iota_{X,*}\CH_*(X)\\
& = \rho_{X,*}(\sigma_X^*\beta_1 \cdot \rho_X^*\rho_{X,*}\sigma_X^*\beta_2) \qquad \mod \;\; \iota_{X,*}\CH_*(X)\\
& = \rho_{X,*}\Big( \sigma_X^*\beta_1 \cdot (\sigma_X^*\beta_2 + \tau) \Big) \qquad \mod \;\; \iota_{X,*}\CH_*(X)
\end{align*}
where $\tau$ is supported on the exceptional divisor $E_X\subset \widetilde{X\times X}$ of the blow up $\rho_X$. Thus $\rho_{X,*}(\tau\cdot \sigma_X^*\beta_1)$ is supported on $\Delta_X$. As a consequence, we have
\[
\Gamma_1\cdot \Gamma_2 = \mu_X^*(\beta_1\cdot\beta_2) + \iota_{X,*}\alpha
\]
for some $\alpha\in \CH_*(X)$. It follows that $\Gamma_1\cdot\Gamma_2$ is represented by a symmetric cycle.

Now we prove (ii). By definition
\[
{}^t\Gamma\circ\Sigma\circ \Gamma = (p_{Z\times Z})_*\Big( (p_{X\times X})^*\Sigma \cdot (\Gamma\times \Gamma) \Big).
\]
Note that $(p_{X\times X})^*\Sigma$ is a symmetric cycle on $Z\times X\times Z \times X$ and so is $\Gamma\times \Gamma$. We apply (i) and conclude that ${}^t\Gamma\circ\Sigma\circ \Gamma$ is the push-forward of a symmetric cycle on $Z\times X\times Z\times X$ which is again symmetric.
\end{proof}

\begin{proof}[Proof of Proposition \ref{prop voisin generalized}]
Since $\CH_0(X)$ is universally trivial, we have a decomposition of the diagonal
\[
 \Delta_X = X \times x + \Gamma', \quad\text{in }\CH_d(X\times X),
\]
where $\Gamma'$ is supported on $D\times X$ for some divisor $D$. We assume that $D=\cup_j D_j$. Then the restriction of $\Gamma'$ to the generic point of $D_j$ is a 1-cycle on $X_{\C(D_j)}$. By the universal triviality of $\CH_1(X)$, there exist a 1-cycle $\gamma_j\in \CH_1(X)$ such that $D_j\otimes \gamma_j$ agrees with $\Gamma'$ over the generic point of $D_j$. Thus we have
\[
\Delta_X = X\times x + \Gamma'_1 + \Gamma'',\quad\text{in }\CH_d(X\times X),
\]
where $\Gamma'_1= \sum_j D_j \otimes \gamma_j$ and $\Gamma''$ is supported on $Y\times X$, where $Y\subset X$ is a closed subset of codimension 2. By repeating the above argument, we eventually get
\[
 \Delta_X = X\times x + \Gamma'_1 + \cdots + \Gamma'_{r-1} + \Gamma''',\quad \text{in }\CH_d(X\times X),
\]
where $\Gamma'_i\in \CH^i(X)\otimes \CH_i(X)$, $1\leq i\leq r-1$ and $\Gamma'''$ is supported on $Z\times X$ for some closed subset $Z\subset X$ of codimension $r$. Then we carry out the symmetrization argument $\Delta_X = \Delta_X\circ {}^t\Delta_X$. We note that $\Gamma'_i\circ (-)$ (resp. $(-)\circ {}^t\Gamma'_i$) is again decomposable and contained in $\CH^i(X)\otimes\CH_i(X)$ ({resp.} in $\CH_i(X)\otimes \CH^i(X)$). Then we carry out a similar argument as before to show that $\Gamma'''\circ{}^t\Gamma'''$ is of the required form. First, there exist smooth projective varieties $D'_i$ of dimension $d-r$, correspondences $\Gamma'''_i\in \CH_d(D'_i\times X)$ and morphism $f_i:D'_i\rightarrow X$ such that
\[
\Gamma''' = \sum_i (f_i,Id_X)_*\Gamma'''_i = \sum_i\Gamma'''_i\circ f_i^*.
\]
Then we have
\[
 \Gamma'''\circ{}^t\Gamma''' = \sum_{i,j} \Gamma'''_i\circ f_i^*\circ f_{j,*}\circ {}^t\Gamma'''_j
\]
We write down the cycles $f_i^*\circ f_{j,*}\in \CH_{d-2r}(D'_i\times D'_j)$ explicitly. Then the terms of the above sum can be grouped into the following types. 

\textit{Term type 1}: $\Gamma_1\circ {}^t\Gamma_2 + \Gamma_2\circ {}^t\Gamma_1$, where $\Gamma_1,\Gamma_2\in \CH_{d-r}(Z\times X)$ for some smooth projective variety $Z$ of dimension $d-2r$. Such terms appear in $\Gamma'''_i\circ f_i^*\circ f_{j,*}\circ {}^t\Gamma'''_j + \Gamma'''_j\circ f_j^*\circ f_{i,*}\circ {}^t\Gamma'''_i$ where $i<j$. Such a type 1 term can be written as
\[
(\Gamma_1 +\Gamma_2)\circ {}^t(\Gamma_1 + \Gamma_2) -\Gamma_1\circ {}^t\Gamma_1 -\Gamma_2\circ {}^t\Gamma_2.
\]
Thus such a term can be written as the required form. 

We still need to deal with the terms $\Gamma'''_i\circ f_i^*\circ f_{i,*}\circ {}^t\Gamma'''_i$. By Lemma \ref{lem symmetric}, such a term can also be written as $\Gamma'''_i\circ \Sigma_i\circ {}^t\Gamma'''_i$ where $\Sigma_i$ is a symmetric cycle of dimension $d-2r$ on $D'_i\times D'_i$. Then $\Sigma_i$ is a linear combination of cycles of the following form: (a) $Z+{}^tZ$ where $Z\subset D'_i\times D'_i$ is an irreducible subvariety of dimension $d-2r$; (b) A subvariety $Z\subset D'_i\times D'_i$ of dimension $d-2r$, which is contained in the diagonal $\Delta_{D'_i}$; (c) An irreducible subvariety $Z\subset D'_i\times D'_i$, which is not supported on the diagonal but invariant under the involution of switching the two factors of $D'_i\times D'_i$. After resolution of the singularities of $Z$ if necessary, we see the following: (a) produces terms of type 1;  (b) produces terms for the form $\Gamma\circ {}^t\Gamma$, which satisfies the required form; (c) produces a new term of type 2 as follows.

\textit{Term type 2:} $\Gamma\circ \sigma \circ {}^t\Gamma$, where $\Gamma\in \CH_{d-r}(Z\times X)$ for some smooth projective variety $Z$ of dimension $d-2r$ and $\sigma$ is a nontrivial involution of $Z$ (induced by the involution of $D'_i\times D'_i$).
\end{proof}

\begin{rmk}
In Proposition \ref{prop voisin}, only diagonalized terms $\Gamma_i\circ {}^t\Gamma_i$ appear since we are allowed to blow up $X$ to make the $D_i$'s to be normal crossing. However, blow-up only preserves universal triviality of  $\CH_0$ and hence is not allowed in Proposition \ref{prop voisin generalized}. Thus the image of $D_i$ in $X$ can fail to be normal and that produces the terms $\Gamma_i\circ\sigma_i\circ {}^t\Gamma_i$ that are not diagonalized.
\end{rmk}

\begin{cor}\label{cor voisin generalized}
Let $X$ be a smooth projective variety of dimension $d$ over $\C$ such that $\CH_i(X)$ is universally trivial for all $i=0,1,\ldots,r-1$. Then the following are true.
\begin{enumerate}[(i)]
\item There exist smooth projective varieties $Z_i$ of dimension $d-2r$, cycles $\Gamma_i\in \CH_{d-r}(Z_i\times X)$ and integers $n_i\in \Z$ such that $\Gamma =\sum n_i \Gamma_i$ induces a universally surjective homomorphism $\bigoplus_i\CH_0(Z_i)\rightarrow \CH_r(X)$ and
\[
 \sum n_i \langle\Gamma_i^*\alpha,\sigma_i^*\Gamma_i^*\beta \rangle_{Z_i} = \langle \alpha, \beta \rangle_X
\]
for all $\alpha,\beta\in \HH^d(X,\Z)$, where $\sigma_i:Z_i\rightarrow Z_i$ is either the identity map or an involution. In particular,
\[
F^1\HH^d(X,\Z) = F^2\HH^d(X,\Z) = \cdots =\F^{2r}\HH^d(X,\Z) = 0.
\]
\item $\HH^{p,q}(X)=0$ for all $p\neq q$ and $\min\{p,q\}< r$.
\item $F^i\HH^d(X,\Z)=0$, for all $i=0,1,\ldots, 2r$. 
\end{enumerate}
\end{cor}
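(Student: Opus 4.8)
The plan is to apply the higher decomposition of the diagonal from Proposition \ref{prop voisin generalized} and to analyze how each of its three types of summands acts, respectively, on $\CH_r(X_L)$, on $\HH^*(X)$, and on the filtration $F^\bullet\HH^d(X)$. Write
\[
\Delta_X = (X\times x + x\times X) + \Sigma + \Gamma,\qquad \Gamma = \sum_i n_i\,\Gamma_i\circ\sigma_i\circ{}^t\Gamma_i,
\]
with $\dim Z_i=d-2r$, $\Gamma_i\in\CH_{d-r}(Z_i\times X)$, and $\Sigma=\sum_k a_k\otimes b_k$ completely decomposable and symmetric. Since $\CH_0(X)$ is universally trivial we have $d\ge 2r$, so $d\ge 2$ and $d-2r\ge 0$. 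I would first record three standing observations: (a) both $X\times x$ and $x\times X$ act as zero on $\HH^m(X)$ for $0<m<2d$ and, as self-correspondences, send every $\gamma\in\CH_r(X_L)$ with $1\le r<d$ to $0$ (a degree/dimension count); (b) each summand $\Gamma_i\circ\sigma_i\circ{}^t\Gamma_i$ factors through $Z_i$, and on middle cohomology $\Gamma_i^*$ sends $\HH^{p,q}(X)$ into $\HH^{p-r,q-r}(Z_i)$; (c) on $\HH^d$, the piece $a_k\otimes b_k$ acts by $\alpha\mapsto\langle a_k,\alpha\rangle_X\,b_k$, and the only $k$ contributing have $a_k,b_k\in\CH_{d/2}(X)$, so that $a_k\otimes b_k$ factors through a point.

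For (i) and (ii) I would act with $\Delta_{X*}=\mathrm{id}$ on $\gamma\in\CH_r(X_L)$. By (a) the point terms drop out; the image of ${}^t\Gamma_i$ lies in $\CH_0(Z_{i,L})$, so $\Gamma_*\gamma$ lies in the image of $\bigoplus_i\CH_0(Z_{i,L})\to\CH_r(X_L)$ and has variation dimension $\le\dim Z_i=d-2r$, while $\Sigma_*\gamma\in\CH_r(X)$ has variation dimension $0$. Lemma \ref{lem var addition} then gives $\mathrm{var}(\gamma)\le d-2r$, which is (i). For the universal surjectivity in (ii) I would absorb the base classes $b_k$ arising from $\Sigma_*\gamma$ by adjoining the ``constant'' correspondences $p_2^*b_k\in\CH_{d-r}(W\times X)$ for a $(d-2r)$-dimensional $W$; these hit $b_k$ and, as one checks directly, act as zero on $\HH^d(X)_{\mathrm{tr}}$, so they do not disturb the pairing identity. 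The intersection formula I would obtain by passing to $\HH^{2d}(X\times X)$ and pairing $[\Delta_X]$ against $\alpha\otimes\beta$ with $\alpha,\beta\in\HH^d(X)_{\mathrm{tr}}$, using $\int_{X\times X}[T]\cup(\alpha\otimes\beta)=\langle T_*\alpha,\beta\rangle_X$: the point terms and $\Sigma$ kill transcendental classes by (a),(c), and the adjunction $\langle(\Gamma_i)_*w,\beta\rangle_X=\langle w,\Gamma_i^*\beta\rangle_{Z_i}$ together with $\sigma_i^*$ being an isometry converts $\langle(\Gamma_i\circ\sigma_i\circ{}^t\Gamma_i)_*\alpha,\beta\rangle_X$ into $\langle\Gamma_i^*\alpha,\sigma_i^*\Gamma_i^*\beta\rangle_{Z_i}$.

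For (iii) I would take $\omega\in\HH^{p,q}(X)$ with $p\ne q$ and $\min(p,q)<r$ and compute $\omega=\Delta_{X*}\omega$. The point terms vanish by (a) (since $0<p+q<2d$), $\Sigma_*\omega$ is a sum of $(s,s)$-classes so its $(p,q)$-part is zero, and by (b) $({}^t\Gamma_i)_*\omega=\Gamma_i^*\omega\in\HH^{p-r,q-r}(Z_i)=0$ because one of $p-r,q-r$ is negative; hence $\Gamma_*\omega=0$ and $\omega=0$. For (iv), since $F^\bullet$ is increasing it suffices to treat $i=2r$; take $\alpha\in F^{2r}\HH^d(X)$ and expand $\alpha=\Delta_{X*}\alpha$. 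The point terms vanish by (a). Each pair $(Z_i,\Gamma_i)$ is admissible for $F^{2r}$ (one has $\dim Z_i=d-2r$ and $\Gamma_i\in\CH_{d-r}$ with $d-r\le d-1$), so $\Gamma_i^*\alpha=0$ and $\Gamma_*\alpha=0$. The crucial point is the decomposable term: by (c), $\Sigma_*\alpha=\sum_k\langle a_k,\alpha\rangle_X\,b_k$ with $a_k\in\CH_{d/2}(X)$, and viewing $a_k$ as a correspondence \emph{from a point}, the pair $(\mathrm{pt},a_k)$ is itself admissible for $F^{2r}$ (its source has dimension $0\le d-2r$ and $a_k\in\CH_{d/2}$ with $d/2\le d-1$); hence $\langle a_k,\alpha\rangle_X=0$ and $\Sigma_*\alpha=0$. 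Therefore $\alpha=0$.

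The step I expect to require the most care is the decomposable term $\Sigma$: it is the only summand that neither factors through $Z_i$ nor dies for naive degree reasons, and a careless choice of ``source'' variety (for instance restricting along a resolution of the $d/2$-dimensional cycle $a_k$) produces the wrong bound $d\ge 4r$ and stalls. The resolution, used throughout, is to factor every non-$\Gamma$ summand through dimension $0\le d-2r$ --- the point terms and the $a_k\otimes b_k$ alike --- so that all of them are governed by the admissible pairs in the definition of $F^{2r}$ and by the universal triviality hypotheses. Once this is in place, the remaining ingredients are only the elementary adjunction and isometry properties of correspondences entering the pairing computation of (ii).
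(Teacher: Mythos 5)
Your proposal is correct and follows the paper's own route: apply the higher decomposition of the diagonal from Proposition \ref{prop voisin generalized} and let each summand act on $\CH_r(X_L)$, on $\HH^{p,q}(X)$, and on $F^{2r}\HH^d(X,\Z)$, using that the $\Gamma_i$-terms factor through the $(d-2r)$-dimensional $Z_i$. If anything, you supply details the paper compresses into a few lines --- the decomposable term $\Sigma$ (a constant cycle in (i), absorbed by adjoined constant correspondences $W\times b_k$ for the surjectivity in (ii), and killed via the admissible pair $(\mathrm{pt},a_k)$ in (iv)), and the vanishing criterion $\min\{p,q\}<r$ in (iii), which is the correct condition matching the statement, whereas the paper's proof writes $\max\{p,q\}<r$.
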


\begin{proof}
Let $L/k$ be an arbitrary field extension. Let $\gamma\in\CH_r(X_L)$, then
\[
\gamma = (\Delta_X)_*\gamma = \Sigma_*\gamma + \Gamma_*\gamma = \Gamma_*\gamma.
\]
Here all the correspondences are understood to be their base changes to $L$; we also use the fact $\Sigma_*\gamma= 0$ which is a consequence of 
$$
\Sigma\in \bigoplus_{i<r}\left( \CH^i(X)\otimes\CH_i(X) \oplus \CH_i(X)\otimes\CH^i(X) \right)\qquad \text{and} \qquad \gamma\in \CH_r(X_L).
$$
Since $\Gamma_*\gamma$ factors through $\sum \CH_0((Z_i)_L)$ the universal surjection in statement (i) follows immediately. Let $\alpha,\beta\in \HH^d(X,\Z)$. Then we have
\[
\langle\alpha,\beta\rangle_X = [\Delta_X]\cup (\alpha\otimes\beta) = [X\times x + x\times X +\Sigma +\Gamma]\cup(\alpha\otimes\beta) = [\Gamma]\cup (\alpha\otimes\beta).
\]
Note that here $\Sigma\cup(\alpha\otimes\beta) =0$ since the algebraic cycle $\Sigma$ is of the special form as above. Indeed, if $\Sigma=\Sigma_1\otimes\Sigma_2\in \CH^i(X)\otimes\CH_i(X)$, then $\Sigma\cup (\alpha\otimes\beta) = \langle[\Sigma_1],\alpha\rangle_X \langle[\Sigma_2],\beta\rangle_X =0$ since $2i<d$. At the same time, we have
\[
[\Gamma]\cup (\alpha \otimes\beta) = \left[\sum n_i\Gamma_i \circ \sigma_i \circ {}^t\Gamma_i \right] \cup (\alpha\otimes\beta) = \sum n_i \langle\Gamma_i^*\alpha,\sigma_i^*\Gamma_i^*\beta \rangle_{Z_i}.
\]
Let $\omega\in\HH^{p,q}(X)$. Then we have
\[
\omega = (\Delta_X)_*\omega = \left( x\times X + X\times x + \Sigma \right)_*\omega + \Gamma_*\omega,
\]
where the first term vanishes whenever $p\neq q$. If $\max \{p,q\}<r$, then the second term also vanishes since it factors through $\HH^{p-r,q-r}(Z_i)$. Statement (ii) follows. Let $\alpha\in F^i\HH^d(X,\Z)$ where $i\leq 2r$. Since $\Delta_X$ factors through varieties of dimension at most $d-2r$, it follows that $\alpha = (\Delta_X)^* \alpha = 0$. Thus (iii) is proved.
\end{proof}

\begin{rmk}
The statement ({ii}) holds under the weaker assumption that $\CH_i(X)_{\Q}$ is universally trivial for $i=0,1,\ldots,r-1$.
\end{rmk}

\begin{thm}\label{thm theta}
Let $X$ be a smooth projective variety of dimension $d$ over $\C$ such that $\CH_i(X)$ is universally trivial for all $i=0,1,\ldots,r-1$, where $r$ is a positive integer with $2r\leq d$. Let $F$ be a smooth projective variety and let $\gamma\in \CH_r(X_{\C(F)})$ be a universally generating $r$-cycle. Then there exists a symmetric algebraic cycle $\theta\in \CH_{d-2r}(F\times F)$ such that
\[
 [\theta]\cdot (\hat\alpha \otimes \hat\beta) = \langle \alpha,\beta \rangle_X
\]
for all $\alpha,\beta\in F^{2r+1}\HH^d(X,\Z)$, where $\hat\alpha := \Gamma^* \alpha$ for some spreading $\Gamma\in\CH^r(F\times X)$ of $\gamma$.
\end{thm}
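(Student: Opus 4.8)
The plan is to produce $\theta$ by transporting, via the universal generation of $\gamma$, the ``diagonalised'' decomposition of $\Delta_X$ supplied by Corollary \ref{cor voisin generalized} from the auxiliary varieties $Z_i$ onto $F\times F$. By Corollary \ref{cor voisin generalized}(ii) there are smooth projective $Z_i$ of dimension $d-2r$, correspondences $\Gamma_i\in\CH_{d-r}(Z_i\times X)$, integers $n_i$ and maps $\sigma_i\colon Z_i\to Z_i$ (each the identity or an involution) with
\[
\sum_i n_i\,\langle\Gamma_i^*\alpha,\sigma_i^*\Gamma_i^*\beta\rangle_{Z_i}=\langle\alpha,\beta\rangle_X
\]
for all $\alpha,\beta\in\HH^d(X,\Z)_{\mathrm{tr}}$. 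In fact this identity holds on all of $\HH^d(X,\Z)$: the two point terms and the completely decomposable part $\Sigma$ of $\Delta_X$ act trivially on the middle cohomology (the latter by a bidegree count, using $d\geq 2r$), and so it holds in particular on $F^{2r+1}\HH^d(X,\Z)$. It therefore suffices to realise each $\Gamma_i$ through $\Gamma$ up to a correction invisible to $F^{2r+1}\HH^d(X,\Z)$.

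The key step is the transfer from $Z_i$ to $F$. Let $\gamma_i:=\Gamma_i|_{\eta_{Z_i}}\in\CH_r(X_{\C(Z_i)})$. Applying the universal generation of $\gamma$ (Definition \ref{defn universal generation}) over the field $L=\C(Z_i)$, I would choose $w_i\in\CH_0(F_{\C(Z_i)})$ with $(\gamma_{\C(Z_i)})_*w_i=\gamma_i$, and spread $w_i$ out to a correspondence $W_i\in\CH_{d-2r}(Z_i\times F)$ with $W_i|_{\eta_{Z_i}}=w_i$. Comparing generic fibres, $\Gamma\circ W_i$ and $\Gamma_i$ agree over $\eta_{Z_i}$, so their difference $E_i:=\Gamma\circ W_i-\Gamma_i\in\CH_{d-r}(Z_i\times X)$ is supported over a proper closed subset of $Z_i$ and hence factors through a variety of dimension $\leq d-2r-1=d-(2r+1)$. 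By the very definition of the filtration (Definition \ref{defn filtration}) we get $E_i^*\alpha=0$ for every $\alpha\in F^{2r+1}\HH^d(X,\Z)$, and consequently
\[
W_i^*\hat\alpha=W_i^*\Gamma^*\alpha=(\Gamma\circ W_i)^*\alpha=\Gamma_i^*\alpha,\qquad\alpha\in F^{2r+1}\HH^d(X,\Z).
\]

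With the $W_i$ in hand I would set $\theta:=\sum_i n_i\,W_i\circ\sigma_i\circ{}^tW_i\in\CH_{d-2r}(F\times F)$. A dimension count shows each term lies in $\CH_{d-2r}(F\times F)$, and since $\sigma_i$ is its own transpose, ${}^t\theta=\theta$, so $\theta$ is symmetric. Finally, the projection formula and adjunction for the correspondences $W_i$ (the computation that already underlies Corollary \ref{cor voisin generalized}(ii)) give, for $\alpha,\beta\in F^{2r+1}\HH^d(X,\Z)$,
\[
[\theta]\cdot(\hat\alpha\otimes\hat\beta)=\sum_i n_i\,\langle W_i^*\hat\alpha,\sigma_i^*W_i^*\hat\beta\rangle_{Z_i}=\sum_i n_i\,\langle\Gamma_i^*\alpha,\sigma_i^*\Gamma_i^*\beta\rangle_{Z_i}=\langle\alpha,\beta\rangle_X,
\]
the middle equality using the displayed identity $W_i^*\hat\alpha=\Gamma_i^*\alpha$ (and likewise for $\beta$) and the last using the decomposition above.

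I expect the main obstacle to be the transfer step: extracting $w_i$ from universal generation and, above all, verifying that the spreading error $E_i=\Gamma\circ W_i-\Gamma_i$ genuinely drops the variation dimension below $d-2r$, so that it is annihilated on $F^{2r+1}\HH^d(X,\Z)$. This is precisely where universal generation (surjectivity of $(\gamma_L)_*$ for every $L$) must be converted into an identity of correspondences modulo lower-dimensional variation, and it has to be dovetailed with Proposition \ref{prop var fil}(ii) and the definition of the filtration. The remaining points --- the dimension bookkeeping for $\theta$, its symmetry via ${}^t\sigma_i=\sigma_i$, and the adjunction identity for the bilinear pairings --- are formal once the transfer is in place.
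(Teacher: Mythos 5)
Your proposal is correct and is essentially the paper's own proof: invoke Corollary \ref{cor voisin generalized}(ii), use universal generation over $K_i=\C(Z_i)$ to lift $\gamma_i=\Gamma_i|_{\eta_{Z_i}}$ to a $0$-cycle on $F_{K_i}$, spread it to $T_i\in\CH_{d-2r}(Z_i\times F)$ (your $W_i$), set $\theta=\sum_i n_i\, T_i\circ\sigma_i\circ{}^tT_i$, and observe that $\Gamma\circ T_i-\Gamma_i$ vanishes over $\eta_{Z_i}$, hence factors through a divisor of $Z_i$ and annihilates $F^{2r+1}\HH^d(X,\Z)$ --- exactly the paper's argument. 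One harmless slip: your claim that the pairing identity of the corollary holds on all of $\HH^d(X,\Z)$ fails for even $d$ (the decomposable part $\Sigma$ pairs nontrivially with algebraic middle-degree classes such as $h^{d/2}$), but it does hold on $F^{2r+1}\HH^d(X,\Z)$ since such classes are killed by correspondences from points (dimension $0\leq d-(2r+1)$), which is all you use.
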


\begin{proof}
By Corollary \ref{cor voisin generalized}, there exist smooth projective varieties $Z_i$ of dimension $d-2r$, cycles $\Gamma_i\in \CH_{d-r}(Z_i\times X)$ and integers $n_i$ such that
\[
\sum n_i \langle\Gamma_i^*\alpha, \sigma_i^*\Gamma_i^*\beta \rangle_{Z_i} = \langle \alpha, \beta \rangle_X
\]
for all $\alpha,\beta\in \HH^d(X,\Z)$. Let $\gamma_i:=\Gamma_i|_{\eta_{Z_i}} \in \CH_r(X_{K_i})$, where $K_i=\C(Z_i)$. For each $i$, by the universal generating property of $\Gamma$, there exists $\tau_i\in \CH_0(F_{K_i})$ such that $\Gamma_*\tau_i = \gamma_i$. Let $T_i\in \CH_{d-2r}(Z_i\times F)$ be a spreading of $\tau_i$ and set
\[
 \theta = \sum n_i T_i\circ\sigma_i\circ {}^t T_i \in \CH_{d-2r}(F \times F).
\]
Note that the condition $\Gamma_*\tau_i = \gamma_i$ means that $\Gamma'_i:=\Gamma\circ T_i$ and $\Gamma_i$ agrees over $\eta_{Z_i}$. Hence $\Gamma_i-\Gamma'_i$ factors through a divisor of $Z_i$. It follows that $\Gamma_i^*\alpha = {\Gamma'}_i^*\alpha$ for all $\alpha\in F^{2r+1}\HH^d(X,\Z)$. Thus, for $\alpha,\beta\in F^{2r+1}H^d(X,\Z)$, we have
\begin{align*}
 [\theta] \cdot (\hat\alpha \otimes \hat\beta) & = \sum n_i [T_i\circ\sigma_i\circ {}^t T_i] \cup (\Gamma^*\alpha \otimes \Gamma^*\beta)\\
  & = \sum n_i \langle T_i^*\Gamma^*\alpha, \sigma_i^* T_i^*\Gamma^*\beta \rangle_{Z_i}\\
  & = \sum n_i \langle {\Gamma'}_i^*\alpha, \sigma_i^* {\Gamma'}_i^{*}\beta \rangle_{Z_i}\\
  & = \sum n_i \langle {\Gamma}_i^*\alpha, \sigma_i^* {\Gamma}_i^{*}\beta \rangle_{Z_i}\\
  & = \langle \alpha,\beta \rangle_X.
\end{align*}
This finishes the proof.
\end{proof}

\section{Universal generation of 1-cycles on cubic hypersurfaces}

In this section we show that the Chow group of 1-cycles on a smooth cubic hypersurface is universally generated by lines. Let $K$ be an arbitrary base field of any characteristic. Let $X\subseteq \PP^{d+1}_{K}$ be a smooth cubic hypersurface of dimension $d$ over $K$. Let $F=F(X)$ be the variety of lines on $X$. It is known by \cite{ak} that $F/K$ is smooth of dimension $2d-4$. Let
\[
\xymatrix{
P\ar[r]^q\ar[d]_p &X\\
F &
}
\]
be the universal line.

\begin{thm}\label{thm cubic generation}
Let $K$ be an arbitrary field and $X/K$ a smooth cubic hypersurface of dimension $d\geq 3$. Then the following are true. 
\begin{enumerate}[(i)]
\item If $\gamma\in \CH_1(X)$, then $3\gamma \in q_*p^* \CH_0(F) + \Z\, h^{d-1}$.
\item If $\CH_1(X)$ contains an element of degree not divisible by 3, then 
\[
\CH_1(X) = q_*p^*\CH_0(F) + \Z\, h^{d-1}.
\]
\item If $\CH_0(F)$ contains an element of degree 1, then
\[
P_*=q_*p^*: \CH_0(F)\longrightarrow \CH_1(X)
\]
is universally surjective.
\end{enumerate}
\end{thm}

The key ingredient one needs to prove the above universal generation is the following relations among 1-cycles on $X$.

\begin{prop}\label{prop relation}
Let $\gamma_1,\gamma_2\in \CH_1(X)$ be 1-cycles of degree $e_1$ and $e_2$ respectively. Let $h\in \CH^1(X)$ be the class of a hyperplane section.
\begin{enumerate}[(i)]
\item There exists a 0-cycle $\mathfrak{a}\in \CH_0(F)$ such that
\[
 (2e_1-3)\gamma_1 + q_*p^*\mathfrak{a} = a h^{d-1}, \quad\text{in }\CH_1(X),
\]
for some integer $a$. If $\gamma_1$ is represented by a geometrically irreducible curve $C$ in general position, then $\mathfrak{a}$ can be taken to be all the lines that meet $C$ in two points.
\item We have
\[
 2e_2\gamma_1 + 2e_1\gamma_2 + q_*p^*\mathfrak{a}' = 3e_1e_2\, h^{d-1}, \quad\text{in } \CH_1(X),
\]
where $\mathfrak{a}' = p_*q^*\gamma_1\cdot p_*q^*\gamma_2\in \CH_0(F)$ is a 0-cycle of degree $5e_1e_2$.
\item Let $\xi\in \CH^r(X)$ with $r<d-1$. Then
\[
 2e_1\xi + q_*p^*\mathfrak{a}'' = b h^{r}, \quad\text{in } \CH^r(X),
\]
where $\mathfrak{a}'' = p_*q^*\gamma_1\cdot p_*q^*\xi \in \CH^{d+r-3}(F)$ and $b\in \Z$.
\end{enumerate}
\end{prop}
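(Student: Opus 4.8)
The plan is to derive all three relations from a single geometric engine — residuation along secant lines — organized so that (iii) is the generic case, (ii) the symmetric endpoint $r=d-1$, and (i) the self-secant (diagonal) case. The only place where the degree-three hypothesis enters is the following elementary linear equivalence: for any algebraic family of lines in $\PP^{d+1}$ parametrized by a base $\mathcal{B}$, giving a ruled variety $\sigma:\mathcal{S}\to\mathcal{B}$ with evaluation map $\psi:\mathcal{S}\to\PP^{d+1}$ whose image is not contained in $X$, the preimage $\psi^{-1}(X)=\psi^{*}[X]$ is a divisor on $\mathcal{S}$ rationally equivalent to $3\,\psi^{*}h$. Thus along each line the three intersection points with $X$ are tied together by one linear equivalence, and residuation (discard two of the three points, keep the third) becomes a genuine operation on cycles. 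Everything else is intersection-theoretic bookkeeping.

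Concretely, for (iii) I would fix a point $w$ on a curve $C_1$ representing $\gamma_1$ and take the cone over a cycle representing $\xi$ with vertex $w$, so that $\mathcal{B}$ is the support of $\xi$ and $\mathcal{S}$ is the resolved cone (of dimension $d-r+1$). The divisor $\psi^{-1}(X)$ then splits into four parts: the vertex section, which dies under $\psi_{*}$; the section over $\xi$, which pushes to $[\xi]$; the residual section, which pushes to a reflected cycle $\xi^{\sharp}_{w}$ of degree $2\deg\xi$ (as forced by dotting $3\,\psi^{*}h=\psi^{-1}(X)$ with $\psi^{*}h$ on $\mathcal{S}$); and those rulings that happen to lie in $X$, which are exactly the lines on $X$ through $w$ meeting $\xi$. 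Pushing $\psi^{-1}(X)\sim 3\,\psi^{*}h$ forward yields, in $\CH^{r}(X)$, an identity among $[\xi]$, $\xi^{\sharp}_{w}$, these contained lines, and a cone-section cycle. Letting $w$ range over $C_1$ and assembling over the degree-$e_1$ family, the contained lines accumulate to $q_{*}p^{*}\gamma''$ with $\gamma''=p_{*}q^{*}\gamma_{1}\cdot p_{*}q^{*}\xi$, the degree-doubling of residuation supplies the factor $2$, and $\deg C_1=e_1$ supplies the coefficient $2e_1$ on $\xi$; the remaining mobile part must then be shown to be a multiple $b\,h^{r}$ of $h^{r}$.

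For (ii) the cycle $\xi$ has codimension $d-1$, so the generic construction degenerates at the endpoint; here I would instead cone $C_1$ against $C_2$ symmetrically, using the explicit secant count that the lines of $X$ meeting two general cycles of degrees $e_1,e_2$ form a $0$-cycle of degree $5e_1e_2$ to identify $\gamma'=p_{*}q^{*}\gamma_{1}\cdot p_{*}q^{*}\gamma_{2}$, and using $\psi^{-1}(X)\sim 3\,\psi^{*}h$ to pin the coefficient $3e_1e_2$ of $h^{d-1}$; symmetrizing in $C_1$ and $C_2$ produces the two terms $2e_2\gamma_1+2e_1\gamma_2$. For (i) one takes $C_1$ against itself: the self-secant family introduces the diagonal of $C_1\times C_1$, whose excess contribution — the tangent lines, i.e.\ the triple contact of the cubic along $C_1$ — subtracts $3$ from $2e_1$, giving the coefficient $(2e_1-3)$ and identifying $\gamma$ with the lines meeting $C_1$ in two points. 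The main obstacle is the same in all three cases and is twofold: first, controlling the excess-intersection contributions supported at the vertex and along the diagonal, which is where the genericity hypothesis on $C$ and a transversality/dimension analysis via Bertini are needed; and second, the genuinely hard point, showing that the mobile residual cycle produced by the construction is rationally equivalent to a multiple of $h^{\bullet}$ rather than to some non-tautological class — this requires a moving-lemma argument realizing the cone sections inside a rational family of cycles sweeping out $X$. Once these are in place, matching the integers $a$, $b$ and the secant degree $5e_1e_2$ is a direct, if lengthy, computation.
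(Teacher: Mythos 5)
Your engine is, in substance, the secant--residuation method of \cite{relation}, which is \emph{not} the route taken here: the paper extracts all three relations from the single divisor-class identity $\mathcal{E} = -h_{\mathcal{Q}}|_{\widetilde{\mathcal{X}^{[2]}}} - \tau^*(2h\hat\otimes 1 - 3\delta)$ on the blow-up of the relative Hilbert scheme $\mathcal{X}^{[2]}$ resolving the Galkin--Shinder map to $\PP(\mathscr{T}_{\PP^{d+1}_B/B}|_{\mathcal{X}})$, intersected with cycles $\tau^*\Gamma$ manufactured from $\tilde\Gamma\times_B\tilde\Sigma$ (Lemmas \ref{lem divisorial relation}--\ref{lem existence Gamma}); your three cone constructions correspond to the three choices of $\Gamma$, and your ``tangent lines subtract $3$'' heuristic is exactly the $-3\delta$ term pairing with the boundary to give $\sum n_k^2\tilde\Gamma_k$. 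That said, as written your plan has two genuine gaps. First, the step you yourself call the hard point --- showing the mobile residual part is a multiple of $h^{\bullet}$ --- is deferred to an unspecified ``moving-lemma argument realizing the cone sections inside a rational family sweeping out $X$.'' That is not an argument, and it is the wrong tool: the Proposition is needed over an arbitrary field $K$ (this is what feeds Theorem \ref{thm main universal generation}), where sweeping families and Bertini-type genericity for a chosen vertex $w$ are unavailable. The actual resolution, both in \cite{relation} and here, is structural rather than positional: the residual term is by construction the restriction to $X$ of an ambient class --- in the paper it appears as $h\cdot i^*\tilde{\pi}_*\tilde{f}^*\varphi_*\Gamma$, factoring through $\mathcal{G}(2,\calO_B^{d+2})$ and $\PP^{d+1}_B$, whose Chow groups over a field are generated by powers of $h$ --- so no moving of cycles is needed at all.

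Second, statement (i) concerns an \emph{arbitrary} $1$-cycle $\gamma_1$ of degree $e_1$, and the relation in (i) is not linear in $\gamma_1$: the coefficient $2e_1-3$ and the diagonal/tangency contribution are quadratic (on the Hilbert-scheme side, $\delta\cdot\Gamma$ pushes to $\sum n_k^2\tilde\Gamma_k$). Your self-secant construction therefore only proves (i) for a single geometrically irreducible curve, and your sketch never says how to pass to reducible or non-effective cycles; this does not follow by linearity. The paper closes this by a purely formal bootstrap that you would need to reproduce: given (i) for $\gamma_1$ and $\gamma'_1$, adding (ii) for the pair $(\gamma_1,\gamma'_1)$ yields $(2(e_1+e'_1)-3)(\gamma_1+\gamma'_1)+\cdots=\cdots$, establishing (i) for all effective cycles by induction; and (ii) applied to $(\gamma_1,\gamma_1)$ gives $4e_1\gamma_1+\cdots=\cdots$, which subtracted from (i) for $\gamma_1$ gives the coefficient $-2e_1-3=2(-e_1)-3$, i.e.\ (i) for $-\gamma_1$. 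With these two repairs --- recognizing the residual class as an ambient restriction instead of invoking a moving lemma, and adding the additivity/negation bootstrap for (i) --- your residuation plan does recover the Proposition, essentially along the lines of \cite{relation}; what the paper's Hilbert-scheme formulation buys beyond that is a uniform proof over an arbitrary base scheme $B$, with the excess/tangency bookkeeping absorbed once and for all into the class $\delta$.
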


We now prove Theorem \ref{thm cubic generation} assuming Proposition \ref{prop relation}.

\begin{proof}[Proof of Theorem \ref{thm cubic generation}]
Apply Proposition \ref{prop relation} (i) to $\gamma_1=\gamma$, we get $(2e-3)\gamma \in q_*p^*\CH_0(F) + \Z h^{d-1}$. Apply Proposition \ref{prop relation} (ii) to the case of $\gamma_1=\gamma$ and $\gamma_2=h^{d-1}$, we get $6\gamma \in q_*p^*\CH_0(F) + \Z h^{d-1}$. Note that the greatest common divisor of $6$ and $2e-3$ is either 3 or 1. We see that $3\gamma \in q_*p^*\CH_0(F) + \Z h^{d-1}$. This proves (i).

If $e=\deg(\gamma)$ is not divisible by $3$, then the greatest common divisor of 6 and $2e-3$ is 1. Hence $\gamma \in q_*p^*\CH_0(F) + \Z h^{d-1}$. Now let $\gamma'\in\CH_1(X)$ be an arbitrary 1-cycle. We apply Proposition \ref{prop relation} (ii) to $\gamma$ and $\gamma'$ and get
\[
2e \gamma' + 2e'\gamma \in q_*p^*\CH_0(F) + \Z h^{d-1}.
 \] 
Since $\gamma \in q_*p^*\CH_0(F) + \Z h^{d-1}$, we conclude that $2e \gamma' \in q_*p^*\CH_0(F) + \Z h^{d-1}$. We already know that $3\gamma' \in q_*p^*\CH_0(F) + \Z h^{d-1}$. By assumption, the greatest common divisor of 3 and $2e$ is 1. We conclude $\gamma'\in q_*p^*\CH_0(F) + \Z h^{d-1}$, which establishes (ii).

Since our base field $K$ is arbitrary, for (iii) it suffices to show that $q_*p^*:\CH_0(F)\rightarrow \CH_1(X)$ is surjective. Let $\mathfrak{a}_0\in \CH_0(F)$ be an element of degree 1. Take $\mathfrak{l}=q_*p^*\mathfrak{a}_0\in \CH_1(X)$, which is of degree 1. By (ii), we only need to show that $h^{d-1}$ is contained in $q_*p^*\CH_0(F)$. Apply Proposition \ref{prop relation} (ii) to $\gamma_1=\gamma_2=h^{d-1}$, we get
\[
 6h^{d-1} + 6h^{d-1} + q_*p^*\mathfrak{a}_1 = 27 h^{d-1},\quad \mathfrak{a}_1\in \CH_0(F).
\]
It follows that $15 h^{d-1} = q_*p^*\mathfrak{a}_1$. We apply the proposition again to $h^{d-1}$ and $\mathfrak{l}$ and get
\[
 2h^{d-1} + 6\mathfrak{l} + q_*p^*\mathfrak{a}_2 = 9 h^{d-1},\quad \mathfrak{a}_2\in \CH_0(F).
\]
Thus $7h^{d-1}=q_*p^*(\mathfrak{a}_2 + 6\mathfrak{a}_0)$. Thus we conclude that $h^{d-1}\in q_*p^*\CH_0(F)$. This proves (iii).
\end{proof}

The proof of Proposition \ref{prop relation} over an algebraically closed field is given in \cite{relation, pt}. In this section, we develop a universal version of the techniques in \cite{relation} which leads to a proof of Proposition \ref{prop relation}. We would like to study the geometry of $X^{[2]}$, the Hilbert scheme of two points on $X$. Let $\delta\in\CH^1(X^{[2]})$ be the ``half boundary". Namely, $2\delta$ is the class of the boundary divisor parametrizing nonreduced length-$2$ subschemes. 

Following Galkin--Shinder \cite{gs}, we define a rational map
\[
\Phi : X^{[2]} \dashrightarrow P_{X}:=\PP(\mathcal{T}_{\PP^{d+1}_K}|_{X})
\]
as follows. Let $x,y\in X$ be two distinct points and they determine a line $L_{x,y} \subset \PP^{d+1}_{K}$. If $L_{x,y}$ is not contained in $X$, then $L_{x,y}$ intersects $X$ in a third point $z\in X$. Then $\Phi([x,y])$ is the point represented by the 1-dimensional subspace $\mathcal{T}_{L_{x,y},z}$ in $\PP(\mathcal{T}_{\PP^{d+1},z})$. Let $P^{[2]}_{F}\subset X^{[2]}$ be the relative Hilbert scheme of $P/F$. Let $p': P^{[2]}_{F}\rightarrow F$ be the structure morphism. Then it is a fact that the indeterminacy of $\Phi$ can be resolved by blowing up $X^{[2]}$ along $P^{[2]}_{F}$. This is proved in Voisin \cite[Proposition 2.9]{voisin universal} for the case $K=\C$ and the same proof works over an arbitrary field $K$. The resulting morphism $\tilde{\Phi}: \widetilde{X^{[2]}}\rightarrow P_{X}$ is the blow up of $P_{X}$ along $P\subset P_{X}$. The inclusion $i_1:P\hookrightarrow P_{X}$ sends a point $(x\in l)$ to the direction $\mathcal{T}_{l,x}$ in $\mathcal{T}_{X,x}$. The two blow-ups $\tau: \widetilde{X^{[2]}} \rightarrow X^{[2]}$ and $\tilde{\Phi} : \widetilde{X^{[2]}} \rightarrow P_X$ share the same exceptional divisor $E \subset \widetilde{X^{[2]}}$. To summarize, we have the following commutative diagram
\[
\xymatrix{
 E\ar[rr]^{\pi_1}\ar[dd]_{\pi_2}\ar[dr]^j &&P\ar[d]_{i_1}\\
 &\widetilde{X^{[2]}}\ar[r]^{\tilde{\Phi}}\ar[d]_{\tau} &P_{X}\\
 P^{[2]}_{F}\ar[r]^{i_2} &X^{[2]}\ar@{-->}[ru]_{\Phi} &
}
\]
We also have the natural identification
\[
 E = P\times_{F} P^{[2]}_{F}
\]
and the morphisms $\pi_1$ and $\pi_2$ are the two projections.

There is a double cover $\sigma: \widetilde{X\times {X}} \rightarrow X^{[2]}$, where $\widetilde{X\times {X}}$ is the blow up of $X\times {X}$ along the diagonal. There is also a morphism 
\[
\Psi:\widetilde{X\times {X}}\longrightarrow P_{X}, \qquad (x,y)\mapsto [\mathcal{T}_{L_{x,y},x}].
\]
Note that the composition $\widetilde{X\times {X}} \rightarrow P_{X} \rightarrow X$, is the blow-up morphism $\rho$ followed by the projection onto the first factor. The above morphisms form the following commutative diagram.
\[
\xymatrix{
 X^{[2]} &\widetilde{X \times X}\ar[l]_{\sigma} \ar[r]^{\quad\Psi} \ar[d]_{\rho} &P_{X}\ar[d]^{\pi} \\
  &X \times X \ar[r]^{\quad p_1} &X
}
\]
Given algebraic cycles $\alpha,\beta\in \CH_*(X)$, we write
\[
 \alpha \hat\otimes \beta : = \sigma_*\rho^*(\alpha \times \beta)\in \CH_*(X^{[2]}).
\]

Two distinct points $x,y\in {X}$ determine a line $L_{x,y}$ in $\PP^{d+1}$. This defines a morphism
\[
\varphi: X^{[2]}\longrightarrow G(2, {d+2}),
\]
where $G(2,{d+2})$ is the Grassmannian of rank two subspaces of $K^{d+2}$. Together with the previous morphisms, we have a commutative diagram with all squares being fiber products.
\begin{equation}\label{eq diagram 1}
\xymatrix{
 \widetilde{X\times {X}} \cup \widetilde{X^{[2]}}\ar[rr]\ar[d] &&P_{X}\ar[r]^{\pi}\ar[d]^{i'} &X\ar[d]^{i}\\
 Q\ar[rr]^{\varphi'}\ar[d]^{f} &&G(1,2, {d+2})\ar[r]^{\quad \tilde{\pi}}\ar[d]^{\tilde{f}} &\PP^{d+1}_K\\
 X^{[2]}\ar[rr]^{\varphi} &&G(2, {d+2}) &
}
\end{equation}
Let $h\in \CH^1(\PP^{d+1}_K)$ be the class of a hyperplane section. We still use $h\in \CH^1(X)$ to denote the restriction of $h$ to $X$ and let $h_{Q}\in \CH^{1}(Q)$ be the pull-back of $h$ to $Q$ via the natural morphism $Q\rightarrow \PP^{d+1}_K$. 

\begin{lem}\label{lem divisorial relation}
We have
\[
E = - h_{Q}|_{\widetilde{X^{[2]}}} - \tau^*(2h\hat\otimes 1 -3\delta)
\]
in $\CH^1(\widetilde{X^{[2]}})$.
\end{lem}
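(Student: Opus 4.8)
The plan is to exploit that $\widetilde{\mathcal{X}^{[2]}}$ carries two blow-down morphisms: $\tau$ realizes it as the blow-up of $\mathcal{X}^{[2]}$ along the codimension-$2$ center $\mathcal{P}^{[2]}_{\mathcal{F}}$, while $\tilde\Phi$ realizes it as the blow-up of $P_{\mathcal{X}}$ along the codimension-$3$ center $\mathcal{P}$, and $\mathcal{E} = \mathcal{P}\times_{\mathcal{F}}\mathcal{P}^{[2]}_{\mathcal{F}}$ is the common exceptional divisor, the two projections $\pi_1,\pi_2$ being the restrictions of $\tilde\Phi$ and $\tau$ to $\mathcal{E}$. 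First I would record the identification $h_{\mathcal{Q}}|_{\widetilde{\mathcal{X}^{[2]}}} = \tilde\Phi^*\pi^* h$: tracing through the fiber-product diagram \eqref{eq diagram 1}, the component $\widetilde{\mathcal{X}^{[2]}}$ of $P_{\mathcal{X}}\times_{\mathcal{G}(1,2)}\mathcal{Q}$ maps to $\mathcal{Q}$ by sending $[x,y]$ to the residual (third) intersection point $z$ of $L_{x,y}$ with $\mathcal{X}$, so the pullback of the hyperplane class via $\mathcal{Q}\to\PP^{d+1}_B$ agrees with the pullback of $\pi^* h$ via $\tilde\Phi$.

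Because $\tau$ is the blow-up of a smooth center, $\CH^1(\widetilde{\mathcal{X}^{[2]}}) = \tau^*\CH^1(\mathcal{X}^{[2]})\oplus\Z\,\mathcal{E}$, so it suffices to write $\tilde\Phi^*\pi^* h = \tau^* A + m\,\mathcal{E}$ and identify $A\in\CH^1(\mathcal{X}^{[2]})$ and $m\in\Z$. To pin down $m$ I would intersect both sides with the $\tau$-contracted ruling $\gamma$ of $\mathcal{E}$, i.e.\ a fiber of $\pi_2\colon\mathcal{E}\to\mathcal{P}^{[2]}_{\mathcal{F}}$. Such a $\gamma$ maps isomorphically under $\pi_1 = \tilde\Phi|_{\mathcal{E}}$ to a line $l\subset\mathcal{X}$ sitting inside $\mathcal{P}\subset P_{\mathcal{X}}$, so $\pi\circ\tilde\Phi$ restricts to a degree-one map $\gamma\to l$ and $\tilde\Phi^*\pi^* h\cdot\gamma = 1$; combined with $\mathcal{E}\cdot\gamma = -1$ and $\tau^*A\cdot\gamma = 0$ this fixes $m$.

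To identify $A$, I would push forward to $\mathcal{X}^{[2]}$: since $\tau_*\mathcal{E} = 0$ and $\tau_*\tau^* = \mathrm{id}$, one gets $A = \tau_*(\tilde\Phi^*\pi^* h)$, which over the open locus where $\tau$ and $\tilde\Phi$ are isomorphisms is exactly the class of the divisor ``the third point $z$ lies on a fixed hyperplane $H_0$''. The heart of the computation is then to express this third-point class in the standard generators $h\hat\otimes 1$ and $\delta$ of $\CH^1(\mathcal{X}^{[2]})$. For this I would use the cubic relation on the universal line $\mathcal{Q}\to\mathcal{X}^{[2]}$: the restriction of the cubic form cuts out the relative degree-$3$ divisor $D_{\mathrm{sub}} + D_{\mathrm{third}}$ of class $3h_{\mathcal{Q}}$, where $D_{\mathrm{sub}}$ is the universal length-$2$ subscheme (of relative degree $2$) and $D_{\mathrm{third}}$ the residual section. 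Expressing $[D_{\mathrm{sub}}]$ through $h_{\mathcal{Q}}$, $h\hat\otimes 1$ and $\delta$ by a direct computation on the $\PP^1$-bundle, and restricting $h_{\mathcal{Q}}$ to the residual section $D_{\mathrm{third}}$, produces the coefficients $2$ (from the length-$2$ subscheme) and $3$ (from the cubic) attached to $h\hat\otimes 1$ and $\delta$. A transverse boundary test curve — a pencil degenerating $[x,y]$ to a double point $2p$ and tracking the third intersection of the varying tangent line — serves to confirm the $\delta$-coefficient independently.

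The main obstacle I anticipate is the bookkeeping of the two exceptional loci simultaneously: the third-point section $D_{\mathrm{third}}\subset\mathcal{Q}$ fails to be a genuine section over the locus where $L_{x,y}\subset\mathcal{X}$ (precisely along $\mathcal{P}^{[2]}_{\mathcal{F}}$), which is exactly why the blow-up $\tau$ is forced and why the $\mathcal{E}$-term appears; keeping the relation $3h_{\mathcal{Q}} = [D_{\mathrm{sub}}] + [D_{\mathrm{third}}]$ honest across this locus, and matching the resulting correction against the tautological normal bundle $\calO(\mathcal{E})|_{\mathcal{E}}$, is the step demanding the most care. Once the coefficients are confirmed, substituting $h_{\mathcal{Q}}|_{\widetilde{\mathcal{X}^{[2]}}} = \tilde\Phi^*\pi^* h$ back yields the asserted identity in $\CH^1(\widetilde{\mathcal{X}^{[2]}})$.
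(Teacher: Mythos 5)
Your strategy is sound, and once unpacked it shares its computational core with the paper's proof, though the two arguments are organized differently. The paper works in $\CH^1(\mathcal{Q})$: it writes $[\widetilde{\mathcal{X}^{[2]}}]=h_{\mathcal{Q}}+f^*\mathfrak{a}$ and fixes the normalization (your $m=-1$) by computing $K_{\widetilde{\mathcal{X}^{[2]}}/B}$ in two ways, once by adjunction inside the $\PP^1$-bundle $\mathcal{Q}=\PP(\mathscr{E}|_{\mathcal{X}^{[2]}})$ and once by the blow-up formula for $\tau$. You instead work in $\CH^1(\widetilde{\mathcal{X}^{[2]}})=\tau^*\CH^1(\mathcal{X}^{[2]})\oplus\Z\mathcal{E}$ and fix the coefficient of $\mathcal{E}$ by intersecting with the $\tau$-contracted ruling $\gamma$; your intersection numbers $\tilde\Phi^*\pi^*h\cdot\gamma=1$ (since $\gamma$ maps isomorphically to a line of $\mathcal{X}$), $\mathcal{E}\cdot\gamma=-1$ and $\tau^*A\cdot\gamma=0$ are all correct, and this is more elementary than the canonical-class bookkeeping; it is also legitimate over a general base $B$, since the integer $m$ can be read off after restriction to a geometric fiber. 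From that point on the two computations literally coincide: $A=\tau_*\bigl(h_{\mathcal{Q}}|_{\widetilde{\mathcal{X}^{[2]}}}\bigr)=f_*\bigl(h_{\mathcal{Q}}\cdot[\widetilde{\mathcal{X}^{[2]}}]\bigr)$, evaluated via your cubic relation $3h_{\mathcal{Q}}=[D_{\mathrm{sub}}]+[D_{\mathrm{third}}]$ together with $f_*\bigl(h_{\mathcal{Q}}\cdot[D_{\mathrm{sub}}]\bigr)=h\hat\otimes 1$ and $f_*h^2_{\mathcal{Q}}=-c_1(\mathscr{E}|_{\mathcal{X}^{[2]}})$; this is exactly the content of equation \eqref{eq tau push} in the paper.

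Two caveats. First, the one step you leave schematic is where the paper's real work lies: expressing $c_1(\mathscr{E}|_{\mathcal{X}^{[2]}})$ (equivalently the class of $D_{\mathrm{sub}}$ in $\CH^1(\mathcal{Q})$) in terms of $h\hat\otimes 1$ and $\delta$. The paper does this via the identification $\varphi^*\mathscr{E}\cong\calO(-1)^{[2]}\otimes\calO(\delta)$, obtained from the boundary sequence for $\sigma_*\calO_{\widetilde{\mathcal{X}\times_B\mathcal{X}}}$, plus the Grothendieck--Riemann--Roch computation $c_1(\calO(-1)^{[2]})=-h\hat\otimes 1-\delta$, giving $c_1(\mathscr{E}|_{\mathcal{X}^{[2]}})=-h\hat\otimes 1+\delta$; your ``direct computation on the $\PP^1$-bundle'' must amount to this, and your transverse test curve can only serve as a check at a geometric point (over a general $B$, $\CH^1(\mathcal{X}^{[2]})$ is larger than $\Z(h\hat\otimes 1)\oplus\Z\delta$, so test curves cannot by themselves pin down the class; your $\tau_*$-derivation fortunately does not need them). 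Second, a sign: carried to the end, your own computation gives $A=3f_*h_{\mathcal{Q}}^2-h\hat\otimes 1=2h\hat\otimes 1-3\delta$ and hence $\mathcal{E}=-h_{\mathcal{Q}}|_{\widetilde{\mathcal{X}^{[2]}}}+\tau^*(2h\hat\otimes 1-3\delta)$, with the \emph{opposite} sign on the $\tau^*$-term from the printed statement. This plus sign is what the paper's own equation \eqref{eq div rel 1} yields after substituting $c_1(\mathscr{E}|_{\mathcal{X}^{[2]}})=-h\hat\otimes 1+\delta$, and it passes a direct check: for the curve in $\widetilde{\mathcal{X}^{[2]}}$ obtained by fixing a general point $x\in\mathcal{X}$ and moving $y$ along a general line $l\subset\mathcal{X}$, the classes $\mathcal{E}$ and $\delta$ restrict with degree $0$, $h\hat\otimes 1$ with degree $1$, and $h_{\mathcal{Q}}$ with degree $2$ (the third point traces the conic residual to $l$ in the plane spanned by $x$ and $l$), so only the plus sign evaluates to $0=\mathcal{E}\cdot C$. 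Thus your closing claim that substituting back ``yields the asserted identity'' glosses over a sign slip that appears to sit in the printed lemma itself rather than in its proof; your method is sharp enough to detect and correct it, and the discrepancy is harmless downstream, where only the combination $(2e-3)\tilde\Gamma+q_*p^*\gamma\in h\cdot i^*\CH^*(\PP^{d+1}_B)$ of Proposition \ref{prop relation general base} is used.
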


\begin{proof}
Let $\mathscr{E}\subset \calO_G^{d+2}$ be the tautological rank-2 subbundle on $G:=G(2,{d+2})$. Note that $Q\cong\PP(\mathscr{E}|_{X^{[2]}})$ is a $\PP^1$-bundle over $X^{[2]}$. The projective bundle formula for Chow groups implies that, as divisors on $Q$, 
\[
 \widetilde{X^{[2]}} = h_{Q} + f^*\mathfrak{a}, \quad \text{in }\CH^1(Q),
\]
for some $\mathfrak{a}\in \CH^1(X^{[2]})$.  We also have the following short exact sequence (derived from the Euler sequence)
\[
 \xymatrix{
  0\ar[r] &\calO(h_{Q})\otimes \frac{f^*(\mathscr{E}|_{X^{[2]}})}{\calO(-h_{Q})} \ar[r] &\mathcal{T}_{Q}\ar[r] & f^*\mathcal{T}_{X^{[2]}}\ar[r] &0.
 }
\]
As a consequence, we have
\begin{align*}
 K_{Q} &= -c_1(\mathcal{T}_{Q}) \\
  & = f^*K_{X^{[2]}} -2h_{Q} - f^*c_1(\mathscr{E}|_{X^{[2]}}).
\end{align*}
The canonical divisor of $\widetilde{X^{[2]}}$ can be computed by the adjunction formula as
\begin{align*}
 K_{\widetilde{X^{[2]}}} & = (K_{Q} + \widetilde{X^{[2]}})|_{\widetilde{X^{[2]}}}\\
 & = \tau^*K_{X^{[2]}} - h_{Q}|_{\widetilde{X^{[2]}}} -\tau^*c_1(\mathscr{E}|_{X^{[2]}}) + \tau^*\mathfrak{a}.
\end{align*}
Since $\widetilde{X^{[2]}}$ is the blow-up of $X^{[2]}$, we also have
\[
 K_{\widetilde{X^{[2]}}} = \tau^*K_{X^{[2]}} +E.
\]
Comparing the above two expressions, we get
\begin{equation}\label{eq Ea}
E = -h_{Q}|_{\widetilde{X^{[2]}}} - \tau^*c_1(\mathscr{E}|_{X^{[2]}}) + \tau^*\mathfrak{a},  \quad \text{in } \CH^1(\widetilde{X^{[2]}}).
\end{equation}
To determine the value of $\mathfrak{a}$, we apply $\tau_*$ to the above equation and get
\begin{equation}\label{eq tau push}
 0 = -f_*(h_{Q}\cdot \widetilde{X^{[2]}}) - c_1(\mathscr{E}|_{X^{[2]}}) + \mathfrak{a}
\end{equation}
in $\CH^1(X^{[2]})$. The first term on the right hand side can be computed as follows.
\begin{align*}
 f_*(h_{Q}\cdot \widetilde{X^{[2]}}) & = f_*\left(h_{Q}\cdot(\widetilde{X^{[2]}} + \widetilde{X \times X}) \right) - f_*\left(h_{Q}\cdot\widetilde{X \times X} \right)\\
 & = f_*(h_{Q} \cdot 3h_{Q}) -\sigma_*\rho^*(h\otimes 1)\\
 & = 3f_*h^2_{Q} - h\hat{\otimes} 1.
\end{align*}
Note that $Q = \PP(\mathscr{E}|_{X^{[2]}})$ is a $\PP^1$-bundle over $X^{[2]}$ and $h_{Q}$ is the class of the associated relative $\calO(1)$-bundle. Thus we have the equation
\[
 h^2_Q + f^*c_1(\mathscr{E}|_{X^{[2]}})\cdot h_{Q} + f^*c_2(\mathscr{E}|_{X^{[2]}}) = 0
\]
and it follows that
\[
 f_*h^2_{Q} = -c_1(\mathscr{E}|_{X^{[2]}}).
\]
This combined with equation \eqref{eq tau push}, we get
\[
\mathfrak{a} = -2c_1(\mathscr{E}|_{X^{[2]}}) - h\hat\otimes 1.
\]
We plug this into \eqref{eq Ea} and get
\begin{equation}\label{eq div rel 1}
E = - h_{Q}|_{\widetilde{X^{[2]}}} - \tau^*(h\hat\otimes 1) - 3 \tau^* c_1(\mathscr{E}|_{X^{[2]}}),\quad \text{in }\CH^1(\widetilde{X^{[2]}}).
\end{equation}

We still need to compute $c_1(\mathscr{E}|_{X^{[2]}})$. For simlicity, let $V\cong K^{d+2}$ be a $(d+2)$-dimensional vector space with a fixed identification $\PP^{d+1}_K = \PP(V)$. For any coherent sheaf $\mathscr{F}$ on $X$, we define
\[
 \mathscr{F}^{[2]} : = \sigma_*\rho^*p_1^*\mathscr{F}.
\]
If $\mathscr{F}$ is locally free of rank $r$, then $\mathscr{F}^{[2]}$ is locally free of rank $2r$. The inclusion $\calO_{X}(-1) \hookrightarrow {V}\otimes_K \calO_X$ induces an inclusion
\begin{equation}\label{eq inclusion 1}
\calO(-1)^{[2]} \hookrightarrow {V}\otimes \calO^{[2]}_{X}= {V}\otimes \sigma_*\calO_{\widetilde{X\times X}}. 
\end{equation}
Recall that $\delta\in\CH^1(X^{[2]})$ is the ``half boundary" which fits into the following short exact sequence
\begin{equation}\label{eq delta}
 \xymatrix{
  0\ar[r] &\calO_{X^{[2]}} \ar[r] 
  &\sigma_*\calO_{\widetilde{X\times X}}\ar[r] &\calO_{X^{[2]}}(-\delta)\ar[r] &0. 
  }
\end{equation}
Applying $V\otimes_K -$ to the above sequence, we get
\begin{equation}\label{eq V delta}
\xymatrix{
0\ar[r] &V\otimes \calO_{X^{[2]}} \ar[r] 
  &V\otimes \sigma_*\calO_{\widetilde{X\times X}}\ar[r] &V\otimes \calO_{X^{[2]}}(-\delta)\ar[r] &0
  }
 \end{equation} 
 Pulling it back to $\widetilde{X\times X}$, we get
\begin{equation}\label{eq pull V delta}
\xymatrix{
 0\ar[r] &V\otimes\calO_{\widetilde{X\times X}} \ar[r] &V\otimes \sigma^*\sigma_*\calO_{\widetilde{X\times X}} \ar[r] & V\otimes \calO_{\widetilde{X\times X}}(-D)\ar[r] &0,
}
\end{equation}
where $D\subset \widetilde{X\times X}$ is the exceptional divisor of the diagonal blow-up $\rho: \widetilde{X\times X} \rightarrow X\times X$. The short exact sequence \eqref{eq pull V delta} restricted to $\widetilde{X\times X} \backslash D$ becomes
\[
\xymatrix{
0\ar[r] &V\ar[r] &V\oplus V \ar[r] &V\ar[r] &0,
}
\]
where the first map is $v\mapsto (v,v)$ and the second map is $(v,v')\mapsto v-v'$. Consider the restriction of the second map of \eqref{eq pull V delta} to the canonical rank two subbundle $\rho^*(p_1^*\calO_X(-1) \oplus p_2^*\calO_X(-1)) \hookrightarrow (V\oplus V)\otimes \calO_{\widetilde{X\times X}}$ and we get
\[
\mu: \rho^*(p_1^*\calO_X(-1) \oplus p_2^*\calO_X(-1)) \longrightarrow V\otimes \calO_{\widetilde{X\times X}}(-D)
\]
Let $x$ and $y$ be two distinct points of $X$. Then the image of $\mu$ at the point $(x,y)$ is simply $V_x+V_y \subset V$, where $V_x\subset V$ (\textit{resp}. $V_y\subset V$) is the one dimensional subspace associated to $x\in \PP(V)$ (\textit{resp.} $y$). Thus $V_x+V_y\subset V$ is the two dimensional subspace associated to the line $L_{x,y}$ passing through $x$ and $y$. Thus $\varphi^*\mathscr{E}$ should be the unique rank two subbundle of $V\otimes \calO_{X^{[2]}}$ which pulls back to $\rho^*(p_1^*\calO_X(-1) \oplus p_2^*\calO_X(-1))$ on $\widetilde{X\times X}\backslash D$.

To find $\varphi^*\mathscr{E}$, we combine \eqref{eq V delta} with the inclusion \eqref{eq inclusion 1} and get a rank two subbundle
\[
\calO(-1)^{[2]}\otimes \calO(\delta) \rightarrow {V}\otimes \calO_{X^{[2]}}.
\]
Note that 
\[
\sigma^*\calO(-1)^{[2]} = \sigma^*\sigma_*\rho^*p_1^*\calO_X(-1) = \rho^*(p_1^*\calO_X(-1) \oplus p_2^*\calO_X(-1)).
\]
Thus the rank 2 subbundle of $V\otimes \calO_{X^{[2]}}$ constructed above satisfies the required condition and hence
\[
 \varphi^*\mathscr{E} \cong \calO(-1)^{[2]}\otimes \calO(\delta).
\]
The Grothendieck--Riemann--Roch formula gives
\[
c_1(\calO(-1)^{[2]}) = -h\hat\otimes 1 - \delta.
\]
The Grothendieck--Riemann--Roch formula uses rational coefficients, but this is fine here since $\CH^1(X^{[2]})$ is torsion free. Another way to get the above formula is to apply the exact functor $(-)^{[2]}$ to the short exact sequence $\xymatrix{0\ar[r] & \calO(-1)\ar[r] &\calO\ar[r] &\calO_H\ar[r] &0}$ and then take first Chern classes. It follows from the above equation that
\[
c_1(\mathscr{E}|_{X^{[2]}}) = c_1(\varphi^*\mathscr{E}) = -h\hat\otimes 1 +\delta
\]
We combine this with equation \eqref{eq div rel 1} and prove the lemma.
\end{proof}

\begin{lem}\label{lem Gamma}
Let $\Gamma\in\CH^r(X^{[2]})$. 
\begin{enumerate}[(i)]
\item The following equation holds in $\CH^{r+1}(P_{X})$,
\[
\tilde{\Phi}_*(E\cdot\tau^*\Gamma) = (i_1)_*p^*\gamma,
\]
where $\gamma = p'_*(i_2)^*\Gamma$ in $\CH^{r-2}(F)$. In particular, we have
\[
 \pi_*\tilde{\Phi}_*(E\cdot\tau^*\Gamma) = q_*p^*\gamma, \quad\text{in }\CH^{r-3}(X).
\]

\item We have
\[
 \tilde{\Phi}_*\tau^*\Gamma = (i')^*\tilde{f}^*\varphi_*\Gamma - \Psi_*\sigma^*\Gamma,\quad \text{in } \CH^r(P_{X}).
\]
\end{enumerate}
\end{lem}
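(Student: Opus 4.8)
The plan is to deduce both identities from two formal properties of refined Gysin maps: functoriality under composition, and their commutation with proper pushforward across a Cartesian square (base change). For (i), I would first apply the self-intersection formula to the exceptional divisor $\mathcal{E}$: writing $j\colon\mathcal{E}\hookrightarrow\widetilde{\mathcal{X}^{[2]}}$ for its inclusion, one has $\mathcal{E}\cdot\tau^*\Gamma=j_*j^*\tau^*\Gamma$, valid since $\mathcal{E}=j_*[\mathcal{E}]$ is Cartier. Reading off $\tau\circ j=i_2\circ\pi_2$ and $\tilde\Phi\circ j=i_1\circ\pi_1$ from the first displayed diagram, functoriality gives $j^*\tau^*\Gamma=\pi_2^*i_2^*\Gamma$, and hence
\[
\tilde\Phi_*(\mathcal{E}\cdot\tau^*\Gamma)=(i_1)_*(\pi_1)_*\pi_2^*i_2^*\Gamma .
\]

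It then remains to simplify $(\pi_1)_*\pi_2^*$. Using the identification $\mathcal{E}=\mathcal{P}\times_{\mathcal{F}}\mathcal{P}^{[2]}_{\mathcal{F}}$, the square with edges $\pi_1,\pi_2,p,p'$ is Cartesian; since $p\colon\mathcal{P}\to\mathcal{F}$ is a flat $\PP^1$-bundle and $p'\colon\mathcal{P}^{[2]}_{\mathcal{F}}\to\mathcal{F}$ is proper, base change yields $(\pi_1)_*\pi_2^*=p^*p'_*$. Therefore $(\pi_1)_*\pi_2^*i_2^*\Gamma=p^*\gamma$ with $\gamma=p'_*i_2^*\Gamma$, which is the first formula. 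Applying $\pi_*$ and using $\pi\circ i_1=q$ --- the projection $P_{\mathcal{X}}\to\mathcal{X}$ sends the direction $\mathscr{T}_{l,x}$ back to $x$ --- gives the stated $q_*p^*\gamma$.

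For (ii), I would work entirely inside diagram \eqref{eq diagram 1}, all of whose squares are Cartesian, and note that $i'\colon P_{\mathcal{X}}\hookrightarrow\mathcal{G}(1,2,\calO_B^{d+2})$ is the pull-back of the cubic $\mathcal{X}\subset\PP^{d+1}_B$, hence a Cartier divisor with a divisorial Gysin $(i')^*$. Starting from $f^*\Gamma\in\CH^*(\mathcal{Q})$, the lower Cartesian square (with $\varphi$ proper and $\tilde f$ a flat $\PP^1$-bundle) gives $\varphi'_*f^*\Gamma=\tilde f^*\varphi_*\Gamma$, so that $(i')^*\varphi'_*f^*\Gamma=(i')^*\tilde f^*\varphi_*\Gamma$, the first term of the target. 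On the other hand, compatibility of the divisorial Gysin with proper pushforward applied to the Cartesian square over $i'$ yields $(i')^*\varphi'_*f^*\Gamma=w_*\bigl((i')^!f^*\Gamma\bigr)$, where $w\colon W\to P_{\mathcal{X}}$ is the induced map and $W:=(\varphi')^{-1}(P_{\mathcal{X}})=\widetilde{\mathcal{X}^{[2]}}\cup\widetilde{\mathcal{X}\times_B\mathcal{X}}$. Equating the two expressions and rearranging will give (ii), once the right-hand side is identified with $\tilde\Phi_*\tau^*\Gamma+\Psi_*\sigma^*\Gamma$.

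The step I expect to be the main obstacle is precisely this last identification, that is, the decomposition of the refined class $(i')^!f^*\Gamma$ along the reducible divisor $W$. The point is that $f^*\Gamma$ is pulled back along the $\PP^1$-bundle $f$, so no component of its support lies in $W$ and it meets each of $\widetilde{\mathcal{X}^{[2]}}$ and $\widetilde{\mathcal{X}\times_B\mathcal{X}}$ properly --- indeed $W$ cuts each fibre of $f$ in the length-$3$ scheme $L_{x,y}\cap\mathcal{X}$. Hence, writing $W=\widetilde{\mathcal{X}^{[2]}}+\widetilde{\mathcal{X}\times_B\mathcal{X}}$ with multiplicity one on each component, the refined Gysin splits as $(i')^!f^*\Gamma=\iota_{1*}\iota_1^*f^*\Gamma+\iota_{2*}\iota_2^*f^*\Gamma$ for the two inclusions $\iota_1,\iota_2$ into $\mathcal{Q}$. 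Using $f\circ\iota_1=\tau$, $w\circ\iota_1=\tilde\Phi$ and $f\circ\iota_2=\sigma$, $w\circ\iota_2=\Psi$ --- all of which unwind from the Galkin--Shinder construction --- the two pieces become $\tilde\Phi_*\tau^*\Gamma$ and $\Psi_*\sigma^*\Gamma$. Verifying these multiplicities and map identifications is exactly where the geometry genuinely enters; everything else is a formal consequence of base change and functoriality.
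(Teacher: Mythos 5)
Your argument is correct and coincides with the paper's: part (i) is the identical three-step computation (self-intersection formula $\mathcal{E}\cdot\tau^*\Gamma=j_*j^*\tau^*\Gamma$, commutativity of the blow-up square, and base change $(\pi_1)_*\pi_2^*=p^*p'_*$ across the Cartesian square $\mathcal{E}=\mathcal{P}\times_{\mathcal{F}}\mathcal{P}^{[2]}_{\mathcal{F}}$, then $\pi\circ i_1=q$), while part (ii) supplies precisely the refined-Gysin/base-change justification of the identity $\tilde{\Phi}_*\tau^*\Gamma+\Psi_*\sigma^*\Gamma=(i')^*\tilde{f}^*\varphi_*\Gamma$ that the paper asserts directly from the Cartesian diagram \eqref{eq diagram 1}, including the multiplicity-one decomposition $[W]=[\widetilde{\mathcal{X}^{[2]}}]+[\widetilde{\mathcal{X}\times_B\mathcal{X}}]$ that the paper also uses implicitly in Lemma \ref{lem divisorial relation}. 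The only blemish is your transversality claim that no component of the support of $f^*\Gamma$ lies in $W$ --- this can fail when $\Gamma$ has components inside the codimension-two locus $\mathcal{P}^{[2]}_{\mathcal{F}}$, over which entire $f$-fibres $L_{x,y}\subset\mathcal{X}$ lie in $W$ --- but it is harmless, since additivity of divisorial intersection classes in the divisor and the identities $D_i\cdot\alpha=\iota_{i*}\iota_i^*\alpha$ hold at the level of Chow classes with no properness-of-intersection hypothesis.
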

\begin{proof}
The statement (\textit{i}) follows from a straightforward calculation as follows.
\begin{align*}
\tilde{\Phi}_*(E\cdot\tau^*\Gamma) & = \tilde{\Phi}_*j_*j^*\tau^*\Gamma\\
 & = (i_1)_*(\pi_1)_* \pi_2^*i_2^*\Gamma\\
 & = (i_1)_*p^*p'_*i_2^*\Gamma.
\end{align*}
We also have
\begin{align*}
 \tilde{\Phi}_* \tau^*\Gamma & = \left(\tilde{\Phi}_* \tau^*\Gamma  + \Psi_*\sigma^*\Gamma\right) - \Psi_*\sigma^*\Gamma\\
 & = (i')^*\tilde{f}^*\varphi_*\Gamma - \Psi_*\sigma^*\Gamma.
\end{align*}
This proves (\textit{ii}).
\end{proof}

\begin{lem}\label{lem existence Gamma}
Let $\{\tilde{\Gamma}_1,\ldots,\tilde{\Gamma}_n, \tilde{\Sigma}_1,\ldots,\tilde{\Sigma}_m\}$ be a set of distinct irreducible reduced curves in $X$. Let $\tilde{\Gamma}=\sum n_k \tilde{\Gamma}_k$ and $\tilde{\Sigma}=\sum m_k \tilde{\Sigma}_k$ be two algebraic cycles of dimension one in $X$. Let $e$ (resp. $e'$) be the degree of $\tilde{\Gamma}$ (resp. $\tilde{\Sigma}$) as a $1$-cycle on $X$.
\begin{enumerate}[(i)]
\item There exists an algebraic cycle $\Gamma$ on $X^{[2]}$ such that 
\begin{align*}
\pi_*\Psi_*\sigma^*\Gamma & = 0,\\ 
\pi_*\Psi_*\sigma^*(h\hat\otimes 1 \cdot \Gamma) & = e\, \tilde{\Gamma},\\
\pi_*\Psi_*\sigma^*(\delta \cdot \Gamma) & = \sum n_k^2 \tilde{\Gamma}_k.
\end{align*}
\item There exists an algebraic cycle $\Gamma'$ on $X^{[2]}$ such that 
\begin{align*}
\pi_*\Psi_*\sigma^*\Gamma' & = 0,\\
\pi_*\Psi_*\sigma^*(h\hat\otimes 1 \cdot \Gamma') & = e'\, \tilde{\Gamma} + e\, \tilde{\Sigma},\\
\pi_*\Psi_*\sigma^*(\delta \cdot \Gamma') & = 0.
\end{align*}
\item Let $\tilde{\Xi}$ be an algebraic cycle of codimension $r<d-1$ on $X$. Then there exists an algebraic cycle $\Gamma''$ on $X^{[2]}$ such that
\begin{align*}
\pi_*\Psi_*\sigma^*\Gamma'' & = 0\\
 \pi_*\Psi_*\sigma^*(h\hat\otimes 1 \cdot \Gamma'') & = e\, \tilde{\Xi},\\
\pi_*\Psi_*\sigma^*(\delta \cdot \Gamma'') & = 0.
\end{align*}
\end{enumerate} 
\end{lem}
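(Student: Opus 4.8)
The plan is to take $\Gamma''$ to be the hat product
\[
 \Gamma'' := \tilde\Gamma \hat\otimes \tilde\Xi = \sigma_*\rho^*(\tilde\Gamma\times_B\tilde\Xi),
\]
and to verify the three identities by the same method as in parts (i) and (ii), transporting everything to the smooth blow-up $\widetilde{\mathcal{X}\times_B\mathcal{X}}$. Two structural facts drive the computation. Since $\pi\circ\Psi = p_1\circ\rho$ and $\rho_*\rho^* = \mathrm{id}$, one has $\pi_*\Psi_*\rho^* = p_{1*}$; and since $\sigma$ is a finite flat double cover whose deck involution $\iota$ covers the swap of the two factors, the identity $\sigma^*\sigma_* = 1+\iota^*$ gives
\[
 \sigma^*(\alpha\hat\otimes\beta) = \rho^*\big(\alpha\times_B\beta + \beta\times_B\alpha\big), \qquad \alpha,\beta\in\CH_*(\mathcal{X}).
\]
Writing $g\colon\mathcal{X}\to B$ for the structure morphism, the projection formula together with the base-change identity $p_{1*}p_2^* = g^*g_*$ reduces $\pi_*\Psi_*\sigma^*(\alpha\hat\otimes\beta) = p_{1*}(\alpha\times_B\beta+\beta\times_B\alpha)$ to push-forwards of the $p_2^*$-factors along the fibres of $g$.

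The first identity is immediate: both $\tilde\Xi$ and $\tilde\Gamma$ have positive relative dimension over $B$, so $g_*\tilde\Xi = g_*\tilde\Gamma = 0$ and hence $\pi_*\Psi_*\sigma^*\Gamma'' = 0$. For the second, I would use that $\sigma^*$ is a ring homomorphism to expand
\[
 \sigma^*\big((h\hat\otimes 1)\cdot\Gamma''\big) = \rho^*\big(p_1^*(h\tilde\Gamma)\cdot p_2^*\tilde\Xi + p_1^*(h\tilde\Xi)\cdot p_2^*\tilde\Gamma + p_1^*\tilde\Gamma\cdot p_2^*(h\tilde\Xi) + p_1^*\tilde\Xi\cdot p_2^*(h\tilde\Gamma)\big),
\]
and apply $p_{1*}$. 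A relative-dimension count leaves only the last term: its $p_2^*$-factor $h\tilde\Gamma$ has relative dimension $0$, whereas the $p_2^*$-factors $\tilde\Xi$, $\tilde\Gamma$ and $h\tilde\Xi$ of the first three terms all have positive relative dimension and hence push forward to $0$ --- it is here that the hypothesis $r<d-1$ enters, guaranteeing that the relative dimension of $h\tilde\Xi$ over $B$ is $d-r-1\geq 1$. Since all components of $\tilde\Gamma$ dominate $B$ and $h\cdot\tilde\Gamma$ restricts to a $0$-cycle of degree $e$ on the generic fibre, $g_*(h\tilde\Gamma) = e[B]$, so the surviving term equals $\tilde\Xi\cdot g^*(e[B]) = e\,\tilde\Xi$.

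For the third identity, I would use that $\sigma$ is ramified precisely along the boundary, so $\sigma^*\delta = R$, the exceptional divisor of $\rho$. With $j\colon R\hookrightarrow\widetilde{\mathcal{X}\times_B\mathcal{X}}$ and the projectivised normal bundle $\pi_R\colon R\to\Delta\cong\mathcal{X}$ (of relative dimension $d-1$), the standard blow-up formula yields
\[
 \sigma^*(\delta\cdot\Gamma'') = R\cdot\rho^*\big(\tilde\Gamma\times_B\tilde\Xi+\tilde\Xi\times_B\tilde\Gamma\big) = j_*\,\pi_R^*\,i_\Delta^*\big(\tilde\Gamma\times_B\tilde\Xi+\tilde\Xi\times_B\tilde\Gamma\big).
\]
Since $\rho\circ j = i_\Delta\circ\pi_R$ one has $\rho_*j_* = i_{\Delta*}\pi_{R*}$, and $\pi_{R*}\pi_R^* = 0$ because $\pi_R$ has positive-dimensional fibres; therefore $\pi_*\Psi_*\sigma^*(\delta\cdot\Gamma'') = 0$.

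The one genuinely delicate point is the vanishing of this $\delta$-term, and it is illuminating to contrast it with part (i). Because $\tilde\Gamma$ and $\tilde\Xi$ are distinct cycles, $\Gamma''$ is a true hat product, so $\sigma^*\Gamma''$ is pulled back from $\mathcal{X}\times_B\mathcal{X}$ with no component supported on the exceptional divisor $R$, and the operator $\pi_{R*}\pi_R^* = 0$ kills the entire $\delta$-contribution. In part (i) the two factors coincide, the self-intersection concentrated along the diagonal is not annihilated, and this is exactly what produces the term $\sum_k n_k^2\,\tilde\Gamma_k$. Apart from this, the only steps needing care are the relative-dimension bookkeeping in the four-term expansion and the identity $g_*(h\tilde\Gamma) = e[B]$, both of which rely on the hypothesis that all components dominate $B$.
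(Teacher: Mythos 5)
Your treatment of part (iii) is correct: with $\Gamma''=\tilde\Gamma\hat\otimes\tilde\Xi$, the identities you invoke --- $\pi\circ\Psi=p_1\circ\rho$, $\sigma^*\sigma_*=1+\iota^*$ for the finite flat double cover, $p_{1*}p_2^*=g^*g_*$, and the excess formula $R\cdot\rho^*\zeta=j_*\pi_R^*i_\Delta^*\zeta$ killed by $\pi_{R*}\pi_R^*=0$ --- are all valid, and the dimension bookkeeping (including the use of $r<d-1$ and of $g_*(h\cdot\tilde\Gamma)=e[B]$) is right. This is in fact slightly cleaner than the paper, which for all three parts takes the strict transforms $\Gamma_{kl}$ of the components of $\tilde\Gamma_k\times_B\tilde\Gamma_l$ (resp.\ $\tilde\Gamma_k\times_B\tilde\Sigma_l$, $\tilde\Gamma_k\times_B\tilde\Xi_l$) dominating $B$ and descends the symmetric cycle $\sum n_kn_l\Gamma_{kl}$ along $\sigma$; your total-transform computation makes the $\delta$-vanishing a one-line excess argument, and the identical construction $\Gamma'=\tilde\Gamma\hat\otimes\tilde\Sigma$ disposes of part (ii), since the components of $\tilde\Gamma$ and $\tilde\Sigma$ are distinct and $(1+\iota^*)$ symmetrizes rather than doubles. (One inaccuracy: $\rho^*$ of a cycle meeting the center does in general have components supported on the exceptional divisor, so your sentence that $\sigma^*\Gamma''$ has ``no component supported on $R$'' is false as stated; it is harmless because your computation only uses the excess formula, which holds unconditionally.)

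The genuine gap is part (i), which you never prove and which your method cannot reach. Since $\rho^*(\tilde\Gamma\times_B\tilde\Gamma)$ is $\iota$-invariant, your own identity gives
\[
 \sigma^*(\tilde\Gamma\hat\otimes\tilde\Gamma) = 2\,\rho^*(\tilde\Gamma\times_B\tilde\Gamma),
\]
so the candidate $\Gamma=\tilde\Gamma\hat\otimes\tilde\Gamma$ outputs the triple $\bigl(0,\;2e\,\tilde\Gamma,\;0\bigr)$: the $h\hat\otimes1$-term carries an unwanted factor $2$, and the $\delta$-term vanishes by your excess argument --- which, note, never used distinctness of the two factors --- instead of equalling $\sum n_k^2\tilde\Gamma_k$. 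Your closing diagnosis therefore misidentifies the mechanism: the diagonal contribution in (i) is produced not by the coincidence of the factors per se but by the paper's use of \emph{strict} transforms. The paper chooses $\Gamma$ with $\sigma^*\Gamma=\sum n_kn_l\Gamma_{kl}$ integrally (possible because the $k=l$ components are $\iota$-invariant and map $2{:}1$ to $\mathcal{X}^{[2]}$ while the $k\neq l$ components are swapped in pairs), and the strict transform $\Gamma_{kk}$ differs from the total transform by a class on the exceptional divisor concentrated over the diagonal of $\tilde\Gamma_k$; this is exactly what yields $\pi_*\Psi_*(\Gamma_{kk}\cdot\sigma^*\delta)=\tilde\Gamma_k$, hence the $\delta$-output $\sum n_k^2\tilde\Gamma_k$, together with the unit (rather than doubled) coefficient $e\,\tilde\Gamma$. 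This is not cosmetic: fed into the relation $\mathcal{E}=-h_{\mathcal{Q}}|_{\widetilde{\mathcal{X}^{[2]}}}-\tau^*(2h\hat\otimes1-3\delta)$ of Lemma \ref{lem divisorial relation}, part (i) is what produces the odd coefficient $2e-3$ in Proposition \ref{prop relation}(i), and the integral generation in Theorem \ref{thm cubic generation} depends precisely on playing that odd relation against the even one from (ii); your hat-product substitute only reproduces (ii) with $\gamma_1=\gamma_2$ (coefficient $4e$) and loses the content of part (i) entirely.
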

\begin{proof}
Let $\tilde{\Gamma}_{kl}: = \tilde{\Gamma}_k\times \tilde{\Gamma}_l \subset X\times X$. Let $\Gamma_{kl}\subset \widetilde{X\times {X}}$ be the strict transform of $\tilde{\Gamma}_{kl}$. Note that $\Gamma_{kl} + \Gamma_{lk} = \sigma^*\sigma_*\Gamma_{kl}$ for all $k<l$. The cycle $\Gamma_{ll}$ maps with degree two onto its image. By abuse of notation we use $\Gamma_l^{[2]}\subset X^{[2]}$ to denote this image. Then we have $\Gamma_{ll} = \sigma^* \Gamma_l^{[2]}$. Then the algebraic cycle $\Gamma=\sum_{k<l} n_kn_l \sigma_*\Gamma_{kl} + \sum n_l^2\Gamma_l^{[2]}$ on $X^{[2]}$ satisfies $\sigma^*\Gamma =\sum n_kn_l \Gamma_{kl}$.  Thus
\[
\pi_*\Psi_*\sigma^*\Gamma =\sum_{k,l}n_kn_l (p_{1})_*\rho_*\Gamma_{kl}  = 0
\]
and
\[
\pi_*\Psi_*\sigma^*(\Gamma\cdot h\hat\otimes 1) = \sum n_kn_l (p_1)_*\rho_*\Big(\Gamma_{kl}\cdot \rho^*(h\otimes 1 + 1\otimes h)\Big) = \sum n_k n_l e_k\tilde{\Gamma}_l = e\,\tilde\Gamma,
\]
where $e_k = \deg(\tilde{\Gamma}_k|_{X_{\eta_B}})$, $e=\sum n_ke_k$.
The remaining equation holds because 
\[
 \pi_*\Psi_*(\Gamma_{kl}\cdot \sigma^*\delta) = \begin{cases}
 0, &\text{if } k\neq l;\\
 \tilde{\Gamma}_k, &\text{if } k=l.
 \end{cases}
\]
This proves (i). Statements (ii) and (iii) are proved similarly. For example, $\Gamma'$ is obtained as the image of the strict transform of $\tilde{\Gamma} \times \tilde{\Sigma}$ and $\Gamma''$ is obtained as the image of the strict transform of $\tilde{\Gamma} \times \tilde{\Xi}$. 
\end{proof}

\begin{proof}[Proof of Proposition \ref{prop relation}]
Using the moving lemma, we may assume that $\gamma_1$ and $\gamma_2$ do not meet each other. This allows us to apply Lemma \ref{lem existence Gamma} to the situation of $\tilde{\Gamma} = \gamma_1$ and $\tilde{\Sigma} = \gamma_2$. Thus we get a 2-cycle $\Gamma'$ on $X^{[2]}$ as in Lemma \ref{lem existence Gamma} (ii). We apply $\pi_*\tilde{\Phi}_*((-)\cdot\tau^*\Gamma')$ to the equality in Lemma \ref{lem divisorial relation} and get
\begin{equation}\label{eq apply Gamma'}
\pi_*\tilde{\Phi}_* (E\cdot \tau^*\Gamma')  = - \pi_*\tilde{\Phi}_*(h_{Q}|_{\widetilde{X^{[2]}}}\cdot\tau^*\Gamma') - \pi_*\tilde{\Phi}_*\tau^*((2h\hat\otimes 1 - 3\delta)\cdot\Gamma').
\end{equation}
For the last term, we apply Lemma \ref{lem Gamma} (ii) and have
\begin{align*}
\pi_*\tilde{\Phi}_*\tau^*((2h\hat\otimes 1 - 3\delta)\cdot\Gamma') & = \pi_*(i')^*\tilde{f}^*\alpha'+ \pi_*\Psi_*((2h \hat\otimes 1 - 3\delta)\cdot\Gamma')\\
 & = i^*\tilde{\pi}_*\tilde{f}^*\alpha'+ 2 e_2\gamma_1 + 2 e_1 \gamma_2,
\end{align*}
where $\alpha' = \varphi_*((2h\hat\otimes 1 - 3\delta)\cdot\Gamma')$ is a 1-cycle on $G(2,d+2)$. Similarly, we also have
\begin{align*}
\pi_*\tilde{\Phi}_*(h_{Q}|_{\widetilde{X^{[2]}}}\cdot\tau^*\Gamma')&  = h\cdot \pi_*\tilde{\Phi}_*(\tau^*\Gamma')\\
& = h\cdot \pi_*(i')^*\tilde{f}^*\alpha'' - \pi_*\Psi_*\sigma^*\Gamma'\\
& = h\cdot i^*\tilde{\pi}_*\tilde{f}^*\alpha'',
\end{align*}
where $\alpha'' = \varphi_* \Gamma'$ is a 2-cycle on $G(2,d+2)$. We apply Lemma \ref{lem Gamma} (i) to the left hand side of \eqref{eq apply Gamma'} and get
\[
\pi_*\tilde{\Phi}_* (E\cdot \tau^*\Gamma') = q_*p^*\mathfrak{a}',\qquad \mathfrak{a}' = p'_*(i_2)^*\Gamma'.
\]
Combining the above equations, we get
\[
2e_2\gamma_1 + 2 e_1\gamma_2 + q_*p^*\mathfrak{a}' = - i^*\Big( \tilde{\pi}_*\tilde{f}^*(\alpha') +  h\cdot \tilde{\pi}_*\tilde{f}^*(\alpha'') \Big).
\]
The right hand side of this equation is a 2-cycle on $\PP^{d+1}_K$ restricted to $X$ and hence it must be a multiple of $h^{d-1}$. We track the definition of $\mathfrak{a}'$ and get
\[
\mathfrak{a}' = p'_*i_2^*\Gamma' = p'_*i_2^* (\gamma_1\hat\otimes \gamma_2) = p_*q^*\gamma_1\cdot p_*q^*\gamma_2.
\]
Then Proposition \ref{prop relation} (ii) follows easily after comparing the degree of each side.

The statement of Proposition \ref{prop relation} (iii) is proved similarly.

To prove Proposition \ref{prop relation} (i), we first reduce it to the case when $\gamma_1$ is represented by a single irreducible curve. Assume that (i) is true for $\gamma_1$ and $\gamma'_1$. Namely,
\begin{align*}
 (2e_1-3)\gamma_1 \, & \in \, q_*p^*\CH_0(F) + \Z h^{d-1},\\
 (2e'_1 -3)\gamma'_1 \, & \in \, q_*p^*\CH_0(F) + \Z h^{d-1},
\end{align*}
where $e_1=\deg(\gamma_1)$ and $e'_1=\deg (\gamma'_1)$. Then clearly (i) is also true for $-\gamma_1$. Now we show that (i) also holds for $\gamma_1 + \gamma'_1$. By (ii), we know that
\[
 2e'_1\gamma_1 + 2e_1\gamma'_1\, \in \, q_*p^*\CH_0(F) + \Z h^{d-1} .
\]
Thus we get
\[
(2(e_1+e'_1) -3)(\gamma_1 + \gamma'_1) = (2e_1-3)\gamma_1 + (2e'_1-3)\gamma'_1 + (2e'_1\gamma_1 + 2e_1\gamma'_1) \, \in \, q_*p^*\CH_0(F) + \Z h^{d-1} .
\]

Now we assume that $\gamma_1$ is represented by a single irreducible curve $\tilde{\Gamma}\subset X$.
Thus we get a 2-cycle $\Gamma$ on $X^{[2]}$ as in \textit{(i)} of Lemma \ref{lem existence Gamma}. We apply $\pi_*\tilde{\Phi}_*((-)\cdot\tau^*\Gamma)$ to the equality in Lemma \ref{lem divisorial relation} and get
\begin{equation}\label{eq apply Gamma}
\pi_*\tilde{\Phi}_* (E\cdot \tau^*\Gamma)  = - \pi_*\tilde{\Phi}_*(h_{Q}|_{\widetilde{X^{[2]}}}\cdot\tau^*\Gamma) - \pi_*\tilde{\Phi}_*\tau^*((2h\hat\otimes 1 - 3\delta)\cdot\Gamma)
\end{equation}
Then we apply Lemma \ref{lem Gamma} (ii) to the last term and get
\begin{align*}
\pi_*\tilde{\Phi}_*\tau^*((2h\hat\otimes 1 - 3\delta)\cdot\Gamma) & = i^*\tilde{\pi}_*\tilde{f}^*\beta' - \pi_*\Psi_*\sigma^*((2h\hat\otimes 1 - 3\delta)\cdot\Gamma) \\
& =  i^*\tilde{\pi}_*\tilde{f}^*\beta' - (2e_1 -3)\gamma_1,
\end{align*}
where $\beta' = \varphi_*((2h\hat\otimes 1 - 3\delta)\cdot\Gamma )$. The other term on the right hand side of \eqref{eq apply Gamma} can be dealt with by
\[
 \pi_*\tilde{\Phi}_*(h_{Q}|_{\widetilde{X^{[2]}}}\cdot\tau^*\Gamma) = h\cdot i^*\tilde{\pi}_*\tilde{F}^*\beta'' - h\cdot \pi_*\Psi_*\sigma^*\Gamma = h\cdot i^*\tilde{\pi}_*\tilde{F}^*\beta'',
\]
where $\beta'' = \varphi_*\Gamma$. Apply Lemma \ref{lem Gamma} (i) to the left hand side of \eqref{eq apply Gamma}, we get
\[
\pi_*\tilde{\Phi}_* (E\cdot \tau^*\Gamma)  = q_*p^*\mathfrak{a},\qquad \mathfrak{a} = p'_*i_2^*\Gamma\in \CH_0(F).
\]
The above three displayed equations conbined with equation \eqref{eq apply Gamma}, we get
\[
 (2e_1 -3)\gamma_1 \, \in \, q_*p^*\CH_0(F) + \Z h^{d-1}.
\]
This finshes the proof.
\end{proof}

\section{Cubic hypersurfaces of small dimensions}

This section is devoted to applications of the universal generation result of the previous section. We relate the rationality problem of a cubic hypersurface of small dimenion to the geometry of its variety of lines. 

\subsection{A special decomposition of the diagonal}
In this subsection we fix a smooth cubic hypersurface $X\subset \PP^{d+1}_{\C}$ of dimension $d=3$ or $4$. Let $F=F(X)$ be the variety of lines on $X$ and $P\subset F\times X$ the universal line. Let $h\in \CH^1(X)$ be the class of a hyperplane section.

\begin{thm}\label{thm cubic 34}
Assume that $X$ admits a Chow theoretical decomposition of the diagonal. Then the following holds.
\begin{enumerate}[(i)]
\item If $d=3$, then there exists a symmetric 1-cycle $\theta \in \CH_1(F\times F)$ such that
\[
\Delta_X = x\times X + X \times x + (\gamma\times h + h\times \gamma) + (P\times P)_*\theta,\quad \text{in } \CH^3(X\times X),
\]
where $\gamma\in \CH_1(X)$. 

\item If $d=4$, then there exists a symmetric 2-cycle $\theta\in \CH_2(F\times F)$ such that
\[
 \Delta_X = x\times X + X \times x + \Sigma + (P\times P)_*\theta,\quad \text{in } \CH^4(X\times X),
 \] 
 where $\Sigma \in \CH_2(X)\otimes\CH_2(X)$ is a symmetric decomposable cycle. Moreover, $\Sigma$ can be chosen to be zero if $\mathrm{Hdg}^4(X)=\Z h^2$.
\end{enumerate}
\end{thm}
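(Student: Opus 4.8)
The overall strategy is to convert the symmetric decomposition of Proposition \ref{prop voisin} into one routed through the universal line, the cohomological shadow of which is Theorem \ref{thm theta}. First I would invoke Proposition \ref{prop voisin}: since $X$ admits a Chow-theoretical decomposition of the diagonal, there are smooth projective $Z_i$ of dimension $d-2$, cycles $\Gamma_i\in\CH_{d-1}(Z_i\times X)$ and integers $n_i$ with $\Delta_X = X\times x + x\times X + \sum_i n_i\,\Gamma_i\circ{}^t\Gamma_i$. Each $\Gamma_i$ restricts over $\eta_{Z_i}$ to a $1$-cycle $\gamma_i\in\CH_1(X_{K_i})$, $K_i=\C(Z_i)$. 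Because $F$ has $0$-cycles of degree one over $\C$, Theorem \ref{thm cubic generation} makes $(P_{K_i})_*\colon\CH_0(F_{K_i})\to\CH_1(X_{K_i})$ surjective, so I can write $\gamma_i=(P_{K_i})_*\tau_i$ and spread $\tau_i$ to $T_i\in\CH_{d-2}(Z_i\times F)$. Then $\Gamma_i':=P\circ T_i$ is a second spreading of $\gamma_i$, and with $\theta:=\sum_i n_i\,T_i\circ{}^tT_i$ (visibly symmetric) the standard product-correspondence identity gives $\sum_i n_i\,\Gamma_i'\circ{}^t\Gamma_i' = P\circ\theta\circ{}^tP = (P\times P)_*\theta$. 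The entire content is then to analyze the error $R:=\sum_i n_i\big(\Gamma_i'\circ{}^t\epsilon_i+\epsilon_i\circ{}^t\Gamma_i'+\epsilon_i\circ{}^t\epsilon_i\big)$ with $\epsilon_i:=\Gamma_i-\Gamma_i'$; as two spreadings of the same cycle, $\epsilon_i$ is supported over a divisor $D_i\subset Z_i$ of dimension $d-3$, so $R$ factors through the $D_i$.

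For $d=3$ this finishes cleanly. Here $D_i$ is a finite set of points, so the fibres of $\epsilon_i$ are divisors on $X$, hence multiples of $h$ since $\Pic(X)=\Z h$. A direct composition-of-correspondences computation then shows each cross term equals $\gamma\times h$ (resp.\ $h\times\gamma'$) for $1$-cycles on $X$, while $\epsilon_i\circ{}^t\epsilon_i$ is of the same shape; symmetry of $R$ forces the two $1$-cycles to agree, producing exactly the term $\gamma\times h+h\times\gamma$ of statement (i). No $X\times x$ or $x\times X$ contribution appears precisely because every vertical fibre is a multiple of $h$.

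For $d=4$ the $D_i$ are curves and the fibre of $\epsilon_i$ over $\eta_{D_i}$ is a genuine $2$-cycle $w_i\in\CH_2(X_{L_i})$, $L_i=\C(D_i)$, no longer forced to be a multiple of $h^2$. The key input is the codimension-two relation Proposition \ref{prop relation}(iii): taking the auxiliary $1$-cycle to be a line gives $2w_i=b_i h^2 - q_*p^*\gamma_i''$ over $L_i$. The $h^2$-part, spread back over $D_i$ and fed into $R$, yields a symmetric decomposable cycle $\Sigma\in\CH^2(X)\otimes\CH^2(X)$ (the second factor is $h^2$, forcing the first into $\CH^2(X)$, exactly as $\Pic(X)=\Z h$ did in the threefold case); the $q_*p^*$-part has both legs in the image of $q_*p^*=P_*$ — the leg coming from $\Gamma_i'=P\circ T_i$ is automatically $P$-generated — and is therefore reabsorbed into $(P\times P)_*\theta$ after enlarging $\theta$. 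This gives statement (ii). For the last assertion, when $\mathrm{Hdg}^4(X)=\Z h^2$ the hypothesis that $X$ carries a decomposition of the diagonal forces $\CH^2(X)=\Z h^2$ (by Bloch--Srinivas the cycle class map is injective on $\CH^2$ of such a cubic fourfold), so $\Sigma$ is an integer multiple of $h^2\times h^2$; since $h^2$ itself lies in $q_*p^*\CH_1(F)$, this can be rerouted through $P$ and $\Sigma$ may be taken to be $0$.

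The main obstacle is the factor of $2$ in Proposition \ref{prop relation}(iii): unlike the integral generation of $1$-cycles by lines (where the coefficient $2e-3=-1$ for a line is a unit), the codimension-two relation only controls $2w_i$, whereas $w_i$ enters $R$ with coefficient one. Reconciling this — showing that the odd part of the vertical $2$-cycles can be disposed of using the freedom in the spreadings $T_i$ and in $\theta$, while keeping $\theta$ symmetric and the leftover decomposable $\Sigma$ genuinely in $\CH^2(X)\otimes\CH^2(X)$ — is the delicate point, and is exactly the reason $\Sigma$ can only be asserted decomposable (and not zero) in general. Assembling the several $q_*p^*$-contributions into a single symmetric $\theta\in\CH_{d-2}(F\times F)$ is the remaining bookkeeping.
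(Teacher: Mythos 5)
Your treatment of $d=3$ is essentially the paper's argument: decompose via Proposition \ref{prop voisin}, replace each $\Gamma_i$ by $\Gamma_i'=P\circ T_i$ using the universal generation of $\CH_1(X)$ by lines, and observe that the error is supported over points of the curves $Z_i$, so its fibres are multiples of $h$, yielding the term $\gamma\otimes h+h\otimes\gamma$. That part is fine. The genuine gap is in $d=4$, and you have in fact located it yourself: your strategy requires controlling the vertical $2$-cycles $w_i\in\CH_2(X_{L_i})$ integrally, but Proposition \ref{prop relation}(iii) only controls $2w_i$, and nothing you propose removes this factor of $2$. Your suggestion that this is ``exactly the reason $\Sigma$ can only be asserted decomposable'' does not repair it: the odd part of $w_i$ is an arbitrary class in $\CH_2(X_{L_i})$, and there is no reason for it to be decomposable or to lie in $q_*p^*\CH_*(F_{L_i})$, so the leftover error is neither absorbed into $\Sigma$ nor into $(P\times P)_*\theta$. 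As written, statement (ii) does not close.

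The paper's actual mechanism for $d=4$ is different and is the essential content you are missing. One first works \emph{cohomologically}: since the error $\Delta_X-x\times X-X\times x-(P\times P)_*\theta$ factors through curves, it annihilates $\HH^4(X,\Z)_{\mathrm{tr}}$, hence its class is K\"unneth-decomposable; by Voisin's theorem that the integral Hodge conjecture holds in degree $4$ for cubic fourfolds, this decomposable class is realized by algebraic cycles $\gamma\otimes h+h\otimes\gamma+\Sigma$, and $\gamma\otimes h+h\otimes\gamma$ (and, when $\mathrm{Hdg}^4(X)=\Z h^2$, also $\Sigma$) is absorbed into $(P\times P)_*\theta$ using the surjectivity of $P_*$ on Chow groups and a class $\tau'\in\CH_1(F)$ with $[P_*\tau']=h^2$. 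One then \emph{upgrades} the resulting cohomological identity to an identity in $\CH^4(X\times X)$: Proposition \ref{prop alg triv}(ii), proved via the Galkin--Shinder/Voisin resolution of $X^{[2]}\dashrightarrow P_X$, lets one modify $\theta$ by a homologically trivial cycle of the form $\sum a_iP_i\times_{S_i}P_i$ so that the Chow-level error $\Gamma$ becomes \emph{algebraically} trivial; the Voevodsky--Voisin nilpotence theorem then gives $\Gamma^{\circ N}=0$ in $\CH^4(X\times X)$, and expanding this relation (noting that powers and products of $\Sigma$-type and $(P\times P)_*\theta$-type terms retain their shape) yields the exact Chow-theoretic decomposition. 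None of these three ingredients --- the integral Hodge conjecture for $X$, the algebraic-triviality proposition via $X^{[2]}$, and the nilpotence argument --- appears in your proposal, and without some substitute for them (in particular for passing from a cohomological to a Chow-theoretic equality) the factor-of-$2$ obstruction you flagged is fatal to a purely direct Chow-level argument. Your Bloch--Srinivas remark for the last assertion is a reasonable alternative observation, but it sits downstream of the unproven Chow-level identity and also needs care with integral (as opposed to rational) coefficients.
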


\begin{proof}
We will see that the construction of $\theta$ as in Theorem \ref{thm theta} suffices in the case of $d=3$. However, it is insufficient in the case of $d=4$ since it produces correspondences factoring through curves. Using an argument of Voisin \cite{voisin universal}, we can show that these problematic terms can be absorbed into the terms $(P\times P)_*\theta$ and $\Sigma$. We make it more precise.

Assume that $d=3$. Then by Proposition \ref{prop voisin}, there exist curves $Z_i$, correspondences $\Gamma_i\in \CH^2(Z_i\times X)$ and integers $n_i$ such that
\[
 \Delta_X = x\times X + X\times x +\sum n_i \Gamma_i\circ {}^t\Gamma_i, \quad \text{in } \CH^3(X\times X).
\]
As in the proof of Theorem \ref{thm theta}, we use the universal generation of $\CH_1(X)$ by lines to get $T_i\in \CH^2(Z_i\times F)$ such that $\Gamma'_i:= P\circ T_i\in \CH^2(Z_i\times X)$ agrees with $\Gamma_i$ over the generic point of $Z_i$. It follows that $\Gamma_i\circ {}^t\Gamma_i - \Gamma'_i\circ {}^t\Gamma'_i$ is a decomposable cycle, \textit{i.e.} of the form $\gamma_i\otimes h + h\otimes \gamma_i$ for some $\gamma_i\in \CH_1(X)$. Indeed $\Gamma_i = \Gamma'_i+ \Delta_i$ where $\Delta_i$ factors through points. Thus $\Gamma_i\circ {}^t\Gamma_i - \Gamma'_i\circ {}^t\Gamma'_i$ also factors through points and hence is decomposible. As a consequence, we have
\begin{align*}
 \Delta_X &= x\times X + X\times x + \sum n_i \Gamma_i\circ {}^t\Gamma_i\\
  & = x\times X + X\times x + \sum n_i \Gamma'_i\circ {}^t\Gamma'_i + \sum n_i (\gamma_i\otimes h + h\otimes \gamma_i)\\
  & = x\times X + X\times x + (P\times P)_*\left(\sum n_i T_i\circ {}^tT_i\right) + \gamma\otimes h + h\otimes \gamma, \quad \gamma = \sum n_i\gamma_i.
\end{align*}
Statement (i) follows by taking $\theta = \sum n_i T_i\circ {}^tT_i$.

Assume that $d=4$, then we can obtain surfaces $Z_i$ and correspondence $T_i\in\CH_3(Z_i\times X)$ similarly. Then we conclude that the cohomology class of
\[
 \Delta_X - x\times X - X\times x - (P\times P)_*\theta 
\]
is decomposable. Since the integral Hodge conjecture holds for $X$ (see Voisin \cite{voisin 07}), we know that
\begin{equation}\label{eq Gamma in coh}
 \Gamma:=\Delta_X - x\times X - X\times x - (P\times P)_*\theta - \gamma\otimes h - h\otimes\gamma -\Sigma = 0, \quad \text{in }\HH^4(X\times X,\Z),
\end{equation}
for some $\gamma\in \CH_1(X)$ and some symmetric cycle $\Sigma\in \CH_2(X)\otimes\CH_2(X)$. Since $\CH_2(F)\rightarrow \CH^1(X)$ is surjective and there exists $\tau\in \CH_2(F)$ such that $P_*\tau = h$, the term $\gamma\otimes h + h\otimes \gamma$ can be absorbed into the term $(P\times P)_*\theta$ and hence we can assume that $\gamma=0$ in equation \eqref{eq Gamma in coh}. There also exists $\tau'\in \CH_1(F)$ such that $P_*\tau' = h^2$ in cohomology. Thus we can assume that $\Sigma$ does not appear in equation \eqref{eq Gamma in coh} if $\mathrm{Hdg}^4(X)=\Z h^2$. By Proposition \ref{prop alg triv} (ii) below, we can modify $\theta$ by a homologically trivial cycle supported on the diagonal of $F\times F$ and assume that $\Gamma$ is algebraically trivial. Now by \cite{v nil, voisin nil}, we know that $\Gamma^{\circ N} = 0$ in $\CH^4(X\times X)$ for some sufficiently large integer $N$. In the expansion of this equation, any term involving $\Sigma$ is again decomposable of the same form and any power of $(P\times P)_*\theta$ is again of this form. After modifying $\theta$ and $\Sigma$, we get the equation
\[
\Delta_X = x\times X + X \times x + \Sigma + (P\times P)_*\theta,\quad \text{in } \CH^4(X\times X).
\]
This finishes the proof.
\end{proof}

\begin{rmk}
When $d=3$, the term $\gamma\otimes h + h\otimes \gamma$ can not be absorbed into $\theta$. This is because the homomorphism $q_*p^*: \HH^2(F,\Z)\rightarrow \HH^2(X,\Z)$ is not surjective. Given any 0-cycle $\mathfrak{o}_F\in \CH_0(F)$ of degree 1, we set $\mathfrak{l}= q_*p^*\mathfrak{o}_F\in \CH_1(X)$. Then the cycle $\gamma$ can always be chosen to be a multiple of $\mathfrak{l}$. This can be seen as follows. For any $\gamma'\in\CH_1(X)$ with $\deg(\gamma')=0$, then there exists $\gamma''\in \CH_1(X)$ such that $\gamma'= 5\gamma''$. Thus $\gamma'\otimes h = \gamma''\otimes 5h$ is contained in $(P\times P)_*\Big( \CH_0(F)\otimes \CH_1(F) \Big)$. Thus $\gamma\otimes h - \deg(\gamma) \mathfrak{l}\otimes h$ can be absorbed into $(P\times P)_*\theta$.

When $d=4$, it is known that $q_*p^*: \HH^6(F,\Z)\rightarrow \HH^4(X,\Z)$ is an isomorphism. If we assume that the integral Hodge conjecture holds for 1-cycles on $F$, then the term $\Sigma$ can always be absorbed into $\theta$. Note that this assumption has recently been established by Mongardi--Ottem \cite{mo}.
\end{rmk}

\begin{prop}\label{prop alg triv}
Let $X$ be a smooth cubic hypersurface of dimension $d=3$ or $4$. Let $\Gamma$ be a symmetric cycle on $X\times X$ such that $[\Delta_X]=[\Gamma]$ in $\HH^d(X\times X,\Z)$.
\begin{enumerate}[(i)]
\item If $d=3$, then 
\[
 \Delta_X - \Gamma = 0 , \quad \text{in }\CH_d(X\times X)/\mathrm{alg}.
\]
\item If $d=4$, then there exists a homologically trivial cycle $\sum_i a_iS_i \in \CH_2(F)$ such that
\[
 \Delta_X-\Gamma-\sum a_i P_i\times_{S_i} P_i = 0,\quad \text{in }\CH_4(X\times X)/\mathrm{alg},
\]
where $P_i = P|_{S_i}$ and $P_i\times_{S_i} P_i$ is understood to be its image in $X\times X$.
\end{enumerate}
\end{prop}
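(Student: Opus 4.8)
The plan is to study $W:=\Delta_X-\Gamma\in\CH_d(X\times X)$, which is symmetric and homologically trivial, and to show that it vanishes in $\CH_d(X\times X)/\mathrm{alg}$ when $d=3$, and vanishes modulo a diagonal contribution coming from $F$ when $d=4$. The guiding idea is to transport the question from $X$ to the variety of lines $F$ through the universal line $P$: once on $F$, the homologically trivial cycles that control $W$ turn out to be automatically algebraically trivial in the threefold case, but furnish a genuine obstruction in the fourfold case.

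First I would dispose of the algebraic part of the cohomology. The even cohomology of $X$ is torsion free and all its Hodge classes are algebraic --- trivially for $d=3$, and by Voisin's integral Hodge conjecture for cubic fourfolds \cite{voisin 07} for $d=4$ --- so there is an orthogonal splitting $\HH^\ast(X)=\HH^\ast(X)_{\mathrm{alg}}\oplus\HH^d(X)_{\mathrm{tr}}$, and the two projectors are classes of algebraic self-correspondences, the transcendental one being built from the universal line via Theorem \ref{thm cubic generation} and the relation $\Phi_l(\alpha)\cdot\Phi_l(\beta)=-2\langle\alpha,\beta\rangle_X$ of Proposition \ref{prop fil cubic} (which makes the cylinder map a split injection up to a nonzero factor). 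Testing $W$ against these projectors, every component other than $\pi_{\mathrm{tr}}\circ W\circ\pi_{\mathrm{tr}}$ is decomposable and homologically trivial, hence governed by homologically trivial classes of $0$-cycles, divisors and $1$-cycles; for these homological and algebraic equivalence coincide (for divisors on any smooth projective variety, and for $1$-cycles on a cubic threefold by Clemens--Griffiths \cite{cg}, where $\CH_1(X)_{\hom}=\CH_1(X)_{\mathrm{alg}}\xrightarrow{\ \sim\ }J^3(X)$). Thus modulo algebraic equivalence $W\equiv\pi_{\mathrm{tr}}\circ W\circ\pi_{\mathrm{tr}}$.

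Next I would transport the transcendental part to $F$. The cylinder correspondence gives an isomorphism $P^\ast:\HH^d(X)_{\mathrm{tr}}\xrightarrow{\ \sim\ }\HH^{d-2}(F)_{\mathrm{tr}}$ --- classical for $d=3$, and the Beauville--Donagi isomorphism \cite{bd} for $d=4$. This identifies $\pi_{\mathrm{tr}}\circ W\circ\pi_{\mathrm{tr}}$, modulo algebraic equivalence, with $(P\times P)_\ast\Theta$ for a symmetric homologically trivial self-correspondence $\Theta$ of the transcendental motive of $F$, and the decisive reduction is that $\Theta$ may be chosen supported on the diagonal, $\Theta=\sum_i a_i\,(\Delta_F)|_{S_i}$ with $\sum_i a_iS_i$ a homologically trivial $(d-2)$-cycle on $F$; its image under $P\times P$ is exactly $\sum_i a_i\,P_i\times_{S_i}P_i$. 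The two cases then separate. For $d=3$, $F$ is the Fano surface and $\sum a_iS_i$ is a homologically trivial divisor, hence algebraically trivial; then $\sum a_iP_i\times_{S_i}P_i$ is the image of an algebraically trivial cycle under a correspondence, so it too is algebraically trivial and $W\equiv 0$, which is (i). For $d=4$, $F$ is a hyperk\"ahler fourfold and $\sum a_iS_i$ is a homologically trivial $2$-cycle, which need not be algebraically trivial; this is precisely the surviving term, giving (ii).

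The main obstacle is the diagonal reduction of the previous paragraph: one must upgrade the cohomological isomorphism $P^\ast$ to a statement valid modulo algebraic equivalence --- so that passing to $F$ introduces no spurious classes into the Griffiths group --- and then show that a symmetric homologically trivial self-correspondence of the transcendental motive of $F$ is represented, modulo algebraic equivalence, by a cycle supported on $\Delta_F$ (for $d=4$ this also requires controlling the finitely many mixed algebraic--transcendental correspondences present in the middle cohomology). In the threefold case the step is harmless, since everything descends to divisors on a surface, and one may instead invoke the isomorphism of Chow motives $\mathfrak{h}^3(X)\cong\mathfrak{h}^1(J^3(X))(-1)$, for which homological and algebraic equivalence agree because the relevant self-correspondences are divisorial on an abelian variety. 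In the fourfold case this reduction is the whole content, as it is what constrains the correction to be a homologically trivial $2$-cycle on $F$ rather than an arbitrary homologically trivial self-correspondence.
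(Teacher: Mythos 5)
Your argument reaches its crux at what you call the ``decisive reduction'' and stops there: you assert that the symmetric homologically trivial self-correspondence $\Theta$ of the transcendental motive of $F$ ``may be chosen supported on the diagonal,'' $\Theta=\sum_i a_i(\Delta_F)|_{S_i}$, but you offer no mechanism for this, and --- as you say yourself --- in the fourfold case ``this reduction is the whole content.'' That is a genuine gap, not a deferred technicality: it is essentially the statement being proved. Nothing in the cohomological isomorphism $P^*\colon \HH^4(X,\Z)\xrightarrow{\sim}\HH^2(F,\Z)$ controls cycles modulo algebraic equivalence; a homologically trivial symmetric cycle in $\CH_4(F\times F)$ has no a priori reason to be algebraically equivalent to one supported on $\Delta_F$, since that is a statement about a Griffiths-type group of the eightfold $F\times F$, inaccessible to Hodge-theoretic projections. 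The earlier steps are also looser than they look: to get $W\equiv\pi_{\mathrm{tr}}\circ W\circ\pi_{\mathrm{tr}}$ modulo algebraic equivalence you need $\Delta_X=\sum\pi_i$ as an identity of \emph{integral cycles} in $\CH_d(X\times X)/\mathrm{alg}$, not merely cohomological projectors whose classes are algebraic; and in case (i) the isomorphism $\mathfrak{h}^3(X)\cong\mathfrak{h}^1(J^3(X))(-1)$ is a $\Q$-coefficient statement while the proposition is integral --- moreover a homologically trivial correspondence induces the zero homomorphism of abelian varieties, but that does not make the cycle itself algebraically trivial without further argument.

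The paper's proof supplies exactly the missing mechanism, and it is where the symmetry of $\Gamma$ does real work (in your outline symmetry is never genuinely used). Because $\Delta_X-\Gamma$ is symmetric, it descends to the Hilbert scheme of two points: by \cite{voisin universal} (Cor.~2.4 together with the proof of Prop.~2.6) there is a homologically trivial cycle $\Gamma'\in\CH^{2d}(X^{[2]})_{\mathrm{hom}}$ with $\mu^*\Gamma'=\Delta_X-\Gamma$, where $\mu$ is the graph of $X^2\dashrightarrow X^{[2]}$. One then exploits the explicit birational diagram in which $\widetilde{X^{[2]}}$ is simultaneously the blow-up of $X^{[2]}$ along $P_2=P^{[2]}_F$ (a $\PP^2$-bundle over $F$) and the blow-up of $P_X=\PP(\mathcal{T}_{\PP^{d+1}}|_X)$ along $P_1=P$ (a $\PP^1$-bundle over $F$). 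The blow-up and projective-bundle formulas split $\Gamma'$ into: a piece $\Phi^*\Gamma'_0$ with $\Gamma'_0\in\CH_d(P_X)_{\mathrm{hom}}$, where homological and algebraic equivalence coincide; pieces killed by $\tau_*$; and pieces indexed by homologically trivial cycles $\Gamma'_{1,1},\Gamma'_{2,1}$ on $F$. For $d=3$ these are classes of codimension at most one on the Fano surface, hence algebraically trivial, giving (i); for $d=4$ the unique survivor is $\Gamma'_{1,1}=\sum_i a_iS_i\in\CH_2(F)_{\mathrm{hom}}$, and the explicit computation $\mu^*(i_{2,*}\eta_2^*S_i)=P_i\times_{S_i}P_i$ produces exactly the correction term in (ii). It is this geometric decomposition --- not any motivic projection --- that forces the diagonal shape you assumed; to close your gap you would have to reprove it, at which point you are running the paper's argument.
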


\begin{proof}
Recall that $h\in \CH^1(X)$ is the class of a hyperplane section. Let $l\subset X$ be a line. We note that, by Totaro \cite{totaro hilbert}, all cohomology groups involved in the proof are torsion free and hence $\HH^*(-)$ should be understood to be $\HH^*(-,\Z)$. By \cite[Corollary 2.4]{voisin universal}, there exists a $d$-cycle $\Gamma'$ on $X^{[2]}$ such that
\[
\mu^* \Gamma' = \Delta_X - \Gamma
\]
as algebraic cycles, where $\mu \in \CH_{2d}((X\times X)\times X^{[2]})$ is the correspondence defined by the closure of the graph of the rational map $X^2\dashrightarrow X^{[2]}$. As is explained in the proof of \cite[Proposition 2.6]{voisin universal}, we can require that
\[
 [\Gamma'] = 0,\quad \text{in }\HH^{2d}(X^{[2]}).
\]
If $d=3$, then Lemma \ref{lem alg equivalence} (i) applies. If $d=4$, then by Lemma \ref{lem alg equivalence} (ii) we can find homologically trivial $\sum_i a_iS_i\in \CH_2(F)$, such that $\Delta_X-\Gamma-\sum_i a_i P_i\times_{S_i} P_i$ is algebraically trivial. 
\end{proof}

\begin{lem}\label{lem alg equivalence}
Let $\Gamma'\in \CH^{d}(X^{[2]})_{\mathrm{hom}}$. 
\begin{enumerate}[(i)]
\item If $d=3$, then $\Gamma'$ is algebraically trivial.

\item If $d=4$, then there exist surfaces $S_i \subset F$ and integers $a_i$ such that $\sum a_i [S_i] =0 $ in $\HH^4(F,\Z)$ and
\[
 \mu^* \Gamma' = \sum a_i P_i \times_{S_i} P_i, \quad \text{in } \CH^{d}(X\times X)/\mathrm{alg},
\]
where $P_i$ is the universal line $P$ restricted to $S_i$.
\end{enumerate}
\end{lem}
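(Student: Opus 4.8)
The geometric engine is the double blow-up already assembled in the excerpt (specialized to $B=\Spec\C$): $\widetilde{X^{[2]}}$ is at once the blow-up of $X^{[2]}$ along the codimension-$2$ locus $P^{[2]}_F$ and the blow-up of the projective bundle $P_X=\PP(\mathscr{T}_{\PP^{d+1}}|_X)$ along the codimension-$3$ locus $P$. The plan is to run the blow-up formula for Chow groups through both descriptions and then apply the projective bundle formula for $P_X\to X$, $P\to F$ and $P^{[2]}_F\to F$. Since all these maps preserve both homological and algebraic equivalence, the computation descends to the Griffiths groups $\mathrm{Griff}^k(X)=\CH^k(X)_{\mathrm{hom}}/\CH^k(X)_{\mathrm{alg}}$ and $\mathrm{Griff}^k(F)$. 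As $\tau^*\colon\CH^d(X^{[2]})\hookrightarrow\CH^d(\widetilde{X^{[2]}})$ is split by $\tau_*$ with both maps respecting the two equivalences, $\Gamma'$ is algebraically trivial precisely when its image in the corresponding summand of $\mathrm{Griff}^d(\widetilde{X^{[2]}})$ vanishes, so it suffices to locate that summand.

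For $d=3$ I would check that every contributing summand vanishes. On the cubic threefold $X$: $\mathrm{Griff}^0=\mathrm{Griff}^1=0$ trivially, $\mathrm{Griff}^3(X)=\mathrm{Griff}_0(X)=0$ since a $0$-cycle of degree zero is algebraically trivial, and $\mathrm{Griff}^2(X)=\mathrm{Griff}_1(X)=0$ because homologically trivial $1$-cycles on a cubic threefold are algebraically trivial, being parametrized by the intermediate Jacobian $J^3(X)$ \cite{cg}. On the Fano surface $F$: $\mathrm{Griff}^1(F)=0$ (homologically trivial divisors form $\Pic^0$) and $\mathrm{Griff}^2(F)=\mathrm{Griff}_0(F)=0$. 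Feeding these into both decompositions forces $\mathrm{Griff}^3(X^{[2]})=0$, so $\Gamma'$ is algebraically trivial, which is (i).

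For $d=4$ the same reduction should isolate a single surviving summand. Again $\mathrm{Griff}^0=\mathrm{Griff}^1=0$ and $\mathrm{Griff}_0(X)=\mathrm{Griff}_0(F)=0$. I would get $\mathrm{Griff}_1(X)=0$ from the universal generation (Theorem \ref{thm cubic generation}): the degree-preserving map $q_*p^*\colon\CH_0(F)\to\CH_1(X)$ is surjective, and since $\HH^6(X,\Z)=\Z$ a homologically trivial $1$-cycle on $X$ has degree $0$, hence is the image of a degree-$0$ (so algebraically trivial) $0$-cycle on $F$. Next $\mathrm{Griff}^2(X)=0$ via Proposition \ref{prop relation} (iii): taking $\gamma_1$ a line, a homologically trivial $\xi\in\CH^2(X)$ satisfies $2\xi+q_*p^*\gamma''=bh^2$, where $p_*q^*\xi\in\CH^1(F)_{\mathrm{hom}}=\Pic^0(F)$ is algebraically trivial and $b=0$ after passing to cohomology, so $2\xi\sim_{\mathrm{alg}}0$. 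The terms $\mathrm{Griff}^3(F)$ appearing in both descriptions should cancel, leaving $\mathrm{Griff}^4(X^{[2]})\cong\mathrm{Griff}^2(F)$, the middle-codimension Griffiths group of the hyperk\"ahler fourfold $F$. This summand is realized geometrically by cycles $\sum_i a_i\,(P^{[2]}_F|_{S_i})$ supported on $P^{[2]}_F$, the locus of length-$2$ subschemes lying on a line of $F$, with $\sum_i a_i[S_i]=0$ in $\HH^4(F,\Z)$ so that the cycle is homologically trivial; thus $\Gamma'\equiv\sum_i a_i\,(P^{[2]}_F|_{S_i})$ modulo algebraic equivalence. Since under $X\times X\dashrightarrow X^{[2]}$ the locus $P^{[2]}_F$ pulls back to the locus $P\times_F P$ of ordered pairs on a common line, applying $\mu^*$ should give $\mu^*\Gamma'\equiv\sum_i a_i\,P_i\times_{S_i}P_i$ modulo algebraic equivalence, with $P_i=P|_{S_i}$, which is (ii).

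The hard part will be twofold. First, the vanishing $\mathrm{Griff}^2(X)=0$ for the cubic fourfold: relation (iii) only delivers $2\xi\sim_{\mathrm{alg}}0$, so clearing the factor $2$ — or substituting a sharper input, using the torsion-freeness of the relevant cohomology following Totaro \cite{totaro hilbert} — requires extra care. Second, and more delicate, is honest bookkeeping through the two blow-up descriptions: the Griffiths groups here may fail to be finitely generated, so the summands cannot be cancelled abstractly, and one must follow the canonical correspondences to confirm both that the surviving summand is exactly $\mathrm{Griff}^2(F)$ (with no residual $\mathrm{Griff}_1(F)$) and that $\mu^*$ sends its geometric representatives to the specific cycles $\sum_i a_i\,P_i\times_{S_i}P_i$, rather than to some other representative of the same class modulo algebraic equivalence.
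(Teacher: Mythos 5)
Your plan is in substance the paper's own proof: the same double blow-up $\widetilde{X^{[2]}}$, the blow-up formula along $\tilde{\Phi}$, projective bundle formulas over $F$, and reduction to Griffiths-type vanishing on $X$ and $F$; your identification of the lone surviving summand for $d=4$ as $\CH_2(F)_{\mathrm{hom}}$ modulo algebraic equivalence, realized by restrictions of $P^{[2]}_F$ over surfaces $S_i\subset F$ and pulled back by $\mu$ to $P_i\times_{S_i}P_i$, is exactly what happens. But the two points you defer are genuine gaps, and one of your proposed repairs would not work. Your argument that $\mathrm{Griff}^2(X)=0$ for the cubic fourfold via Proposition \ref{prop relation}(iii) only yields $2\xi\sim_{\mathrm{alg}}0$, and torsion-freeness of $\HH^*(X^{[2]},\Z)$ following \cite{totaro hilbert} is beside the point: it controls torsion in cohomology, not torsion in a Griffiths group. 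What closes this is the standard fact that $\CH^2(X)_{\mathrm{hom}}=0$ integrally for a cubic fourfold (Bloch--Srinivas \cite{bs} plus the injectivity of Bloch's map on torsion, using $\HH^3(X,\Z)=0$ and $J^3(X)=0$); the paper simply invokes Voisin's argument in \cite{voisin universal} that homological and algebraic equivalence coincide on $P_X$. (Your argument that $\mathrm{Griff}_1(X)=0$ via Theorem \ref{thm cubic generation} and degree-zero $0$-cycles on $F$ is fine, as are your $d=3$ vanishings.)

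The second gap, the bookkeeping through the two blow-up descriptions, is where your worry about non-finitely-generated summands and abstract cancellation is real but is sidestepped in the paper: no comparison of two decompositions is ever made. One computes in a single direction: pull back by $\tau$, decompose $\tau^*\Gamma'=\tilde{\Phi}^*\Gamma'_0+j_*\pi_1^*\Gamma'_1+j_*(\xi_2\cdot\pi_1^*\Gamma'_2)$ via the blow-up of $P_X$ along $P_1$ (each piece homologically trivial), split $\Gamma'_1,\Gamma'_2$ over $F$ by the bundle formula for $\eta_1:P_1\to F$, then apply $\tau_*$, which recovers $\Gamma'$ since $\tau_*\tau^*=\mathrm{id}$. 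The terms pulled back from $F$ with no $\xi_1$ factor die for a concrete reason: $\tau\circ j=i_2\circ\pi_2$ and $\pi_1^*\eta_1^*=\pi_2^*\eta_2^*$ on $E=P_1\times_F P_2$, so $\pi_{2,*}$ kills them by relative dimension one. What survives is $\Gamma'=\Phi^*\Gamma'_0+i_{2,*}\eta_2^*\Gamma'_{1,1}+i_{2,*}(\xi_2\cdot\eta_2^*\Gamma'_{2,1})$ with $\Gamma'_{2,1}$ of codimension at most one on $F$, hence algebraically trivial, and for $d=3$ likewise $\Gamma'_{1,1}\in\CH^1(F)_{\mathrm{hom}}$, giving (i); for $d=4$ one writes $\Gamma'_{1,1}=\sum a_iS_i$ and concludes by the explicit cycle identity $\mu^*(i_{2,*}\eta_2^*S_i)=P_i\times_{S_i}P_i$, which is a direct computation with the graph of $X\times X\dashrightarrow X^{[2]}$, not merely an equality modulo algebraic equivalence. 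So your proposal has the right architecture and the right answer, but as written it is a program whose two flagged steps carry the actual content, and the suggested fix for the first one is off target.
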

\begin{proof}
This is a consequence of the explicit resolution $\tilde{\Phi}$ of the birational map $\Phi$ between $X^{[2]}$ and $P_X : =\PP(\mathcal{T}_{\PP^{d+1}}|_X)$; see the previous section and \cite[Proposition 2.9]{voisin universal}. Recall that we have the following commutative diagram.
\[
 \xymatrix{
E\ar[rd]^j \ar[dd]_{\pi_2}\ar[rr]^{\pi_1} & &P_1 \ar[d]^{i_1} \\
 & \widetilde{X^{[2]}}\ar[d]^\tau \ar[r]^{\tilde{\Phi}} & P_X \\
 P_2 \ar[r]^{i_2} & X^{[2]} &
}
\]
Here $P_2=P^{[2]}/F$ is the relative Hilbert scheme of two points on the universal line $P/F$ and $P_1=P$. The morphism $\tau$ is the blow up of $X^{[2]}$ along $P_2$ and $\tilde{\Phi}$ is the blow up of $P_X$ along $P_1$. The two blow up morphisms share the same exceptional divisor
\[
 E = P_1\times_F  P_2.
\]
Note that $\eta_1:P_1\rightarrow F$ is a $\PP^1$-bundle over $F$ and $\eta_2: P_2 \rightarrow F$ is a $\PP^2$-bundle over $F$. Let $\xi_i\in\CH^1(P_i)$, $i=1,2$, be the first Chern classes of the relative $\calO(1)$-bundles. By abuse of notation, we still use $\xi_i$ to denote its pull back to $E$.
By the blow up formula, we know that
\[
\CH_d(\widetilde{X^{[2]}}) = \tilde{\Phi}^*\CH_d(P_X) \oplus j_* \pi_1^* \CH_{d-2}(P_1) \oplus j_*\Big(\xi_2\cdot  \pi_1^*\CH_{d-1}(P_1) \Big).
\]
Thus
\begin{equation}\label{eq sigma pull back}
 \tau^*\Gamma' =  \tilde{\Phi}^*\Gamma'_0 + j_*\pi_1^*\Gamma'_1 + j_*(\xi_2 \pi_1^* \Gamma'_2)
\end{equation}
where
\[
\Gamma'_0 \in \CH_d(P_X)_{\mathrm{hom}} , \quad \Gamma'_1 \in \CH_{d-2}(P_1)_{\mathrm{hom}}, \qquad \Gamma'_2 \in \CH_{d-1}(P_1)_{\mathrm{hom}}.
\]
By the projective bundle formula, we have
\begin{align*}
 \Gamma'_1 & = \eta_1^* \Gamma'_{1,0} + \xi_1\cdot \eta_1^*\Gamma'_{1,1},\quad \Gamma'_{1,0}\in \CH_{d-3}(F)_{\mathrm{hom}} \text{ and }\Gamma'_{1,1}\in \CH_{d-2}(F)_{\mathrm{hom}};\\
 \Gamma'_2 & = \eta_1^* \Gamma'_{2,0} + \xi_1\cdot \eta_1^*\Gamma'_{2,1},\quad \Gamma'_{2,0}\in \CH_{d-2}(F)_{\mathrm{hom}} \text{ and }\Gamma'_{2,1}\in \CH_{d-1}(F)_{\mathrm{hom}}.
\end{align*}
Note that 
\begin{align*}
 \tau_*j_*(\pi_1^*\eta_1^*\Gamma'_{1,0}) & = i_{2,*}\pi_{2,*}(\pi_1^*\eta_1^*\Gamma'_{1,0}) = 0, \\
 \tau_*j_*(\xi_2\cdot \pi_1^*\eta_1^*\Gamma'_{2,0}) & = i_{2,*}\pi_{2,*}(\xi_2\cdot \pi_1^*\eta_1^*\Gamma'_{2,0}) = 0.
\end{align*}
Applying $\sigma_*$ to equation \eqref{eq sigma pull back}, we get
\begin{align*}
\Gamma' & = \Phi^*\Gamma'_0 + i_{2,*}\pi_{2,*}\pi_1^*(\xi_1\cdot \eta_1^*\Gamma'_{1,1}) + i_{2,*}\pi_{2,*}\Big( \xi_2\cdot \pi_1^*(\xi_1\cdot \eta_1^*\Gamma'_{2,1}) \Big)\\
 & =  \Phi^*\Gamma'_0 + i_{2,*}\eta_2^*\Gamma'_{1,1} + i_{2,*}\Big( \xi_2\cdot \eta_2^*\Gamma'_{2,1}\Big).
\end{align*}

Now we follow the argument in the proof of \cite[Theorem 1.1]{voisin universal}. The first fact is that the algebraic equivalence is the same as the homological equivalence on $P_X$. Thus we see that $\Gamma'_0$ is algebraically equivalent to zero. When $d=3$ or $4$, the cycle $\Gamma'_{2,1}$ is either of codimension 0 or of codimension 1. Thus we always have that $\Gamma'_{2,1}$ is algebraically equivalent to zero. When $d=3$ we have $\dim F =2$ and $\Gamma'_{1,1}\in \CH^1(F)_{\mathrm{hom}}$. So $\Gamma'_{1,1}$ is also algebraically equivalent to zero in this case. Statement (1) follows.

Assume $d=4$ and hence $\dim F =4$. Thus $\Gamma'_{1,1}\in\CH_2(F)_{\mathrm{hom}}$. We can write
\[
 \Gamma'_{1,1} = \sum_i a_i S_i
\]
where $S_i\subset F$ are surfaces. Then an explicit computation gives
\[
\mu^* (i_{2,*}\eta_2^* S_i) = P_i \times_{S_i} P_i
\]
as cycles. Hence the lemma is proved.
\end{proof}

\subsection{Algebraicity of the Beauville--Bogomolov form}

Let $X$ be a smooth cubic fourfold and let $F$ be its variety of lines. It is known that $F$ is a hyperk\"ahler variety. By Beauville--Donagi \cite{bd}, we know that $\alpha \mapsto \hat\alpha:=p_*q^*\alpha$ gives an isomorphism between $\HH^4(X,\Z)$ and $\HH^2(F,\Z)$ and the Beauville--Bogomolov bilinear form on $\HH^2(F,\Z)$ is given by
\[
\mathfrak{B}(\hat\alpha,\hat\beta) = \langle\alpha, h^2 \rangle_X \langle \beta,h^2 \rangle_X - \langle \alpha, \beta \rangle_X,
\]
for all $\alpha,\beta \in \HH^4(X,\Z)$. Let $\alpha_i$, $i=1,\ldots,23$, be an integral basis of $\HH^4(X,\Z)$. Then $\{\hat\alpha_i\}$ form an integral basis of $\HH^2(F,\Z)$ and let $\{ \hat\alpha_i^\vee \}$ be the dual basis of $\HH^6(F,\Z)$. Then the Beauville--Bogomolov form corresponds to the canonical integral Hodge class
\[
 q_{\mathfrak{B}} = \sum _{i,j=1}^{23} b_{ij}\hat\alpha_i^\vee \otimes \hat\alpha_j^\vee \in \HH^{12}(F\times F,\Z),
\]
Where $b_{ij}=\mathfrak{B}(\hat\alpha_i,\hat\alpha_j)$.

\begin{prop}
Let $X$ be a smooth cubic fourfold and $F$ be its variety of lines as above. If $q_{\mathfrak{B}}\in \HH^{12}(F\times F,\Z)$ is algebraic then $X$ is universally $\CH_0$-trivial. The converse is true if the integral Hodge conjecture holds for 1-cycles on $F$ (\textit{e.g.} if $\mathrm{Hdg}^4(X)$ is generated by $h^2$).\footnote{The integral Hodge conjecture for $1$-cycles on $F$ has recently been established by Mongardi--Ottem \cite{mo}.} 
\end{prop}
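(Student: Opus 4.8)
The plan is to deduce both implications from a single numerical identity tying $q_{\mathfrak{B}}$ to the intersection form of $X$. By the definition of $q_{\mathfrak{B}}$ and of the dual basis $\{\hat\alpha_i^\vee\}$, for all $\alpha,\beta\in\HH^4(X,\Z)$ one has
\[
q_{\mathfrak{B}}\cdot(\hat\alpha\otimes\hat\beta)=\mathfrak{B}(\hat\alpha,\hat\beta)=\langle\alpha,h^2\rangle_X\,\langle\beta,h^2\rangle_X-\langle\alpha,\beta\rangle_X .
\]
Since $h^2$ is algebraic, the first term vanishes whenever $\alpha$ or $\beta$ lies in $\HH^4(X,\Z)_{\mathrm{tr}}$, so on transcendental classes $-q_{\mathfrak{B}}$ realises precisely the pairing $\langle\alpha,\beta\rangle_X$ of Theorem \ref{thm main coh}. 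I also recall that, by \cite{voisin universal}, $X$ is universally $\CH_0$-trivial if and only if it admits a Chow-theoretical decomposition of the diagonal.

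\textbf{Forward implication.} If $q_{\mathfrak{B}}$ is algebraic, I would pick an algebraic cycle representing $-q_{\mathfrak{B}}$ and symmetrise it (legitimate, as $\mathfrak{B}$ is symmetric) to get a symmetric $\theta\in\CH_2(F\times F)$ with $[\theta]=-q_{\mathfrak{B}}$. The identity above gives $[\theta]\cdot(\hat\alpha\otimes\hat\beta)=\langle\alpha,\beta\rangle_X$ for all transcendental $\alpha,\beta$, so Theorem \ref{thm main coh} furnishes a cohomological decomposition of the diagonal. For a cubic fourfold such a decomposition can be promoted to a Chow-theoretical one by the nilpotence argument in the $d=4$ part of the proof of Theorem \ref{thm cubic 34}: choosing a decomposable $\Sigma$, the error term $\Gamma=\Delta_X-x\times X-X\times x-\Sigma-(P\times P)_*\theta$ is cohomologically trivial, Proposition \ref{prop alg triv} renders it algebraically trivial after modifying $\theta$ by a homologically trivial cycle on the diagonal of $F\times F$, and then $\Gamma^{\circ N}=0$ by \cite{v nil, voisin nil}, which together with the idempotence of $\Delta_X$ absorbs $\Gamma$. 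Hence $X$ is universally $\CH_0$-trivial.

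\textbf{Converse.} Assume $X$ universally $\CH_0$-trivial and that the integral Hodge conjecture holds for $1$-cycles on $F$. Theorem \ref{thm cubic 34}(ii) provides a symmetric $\theta\in\CH_2(F\times F)$ and a decomposable symmetric $\Sigma=\sum_k s_k\otimes t_k$ with $s_k,t_k\in\CH_2(X)$ such that $\Delta_X=x\times X+X\times x+\Sigma+(P\times P)_*\theta$. Intersecting cohomology classes with $\alpha\otimes\beta$ and using $[(P\times P)_*\theta]\cdot(\alpha\otimes\beta)=[\theta]\cdot(\hat\alpha\otimes\hat\beta)$ together with the first identity, I obtain
\[
\big([\theta]+q_{\mathfrak{B}}\big)\cdot(\hat\alpha\otimes\hat\beta)=\langle\alpha,h^2\rangle_X\,\langle\beta,h^2\rangle_X-\sum_k\langle s_k,\alpha\rangle_X\,\langle t_k,\beta\rangle_X ,
\]
a bilinear form that factors through the algebraic classes of $\HH^4(X)$ in each variable. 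Now two facts combine. First, since $F$ is a hyperk\"ahler fourfold, $\HH^{\mathrm{odd}}(F)=0$, so the K\"unneth decomposition of $\HH^{12}(F\times F)$ consists only of $\HH^4\otimes\HH^8$, $\HH^6\otimes\HH^6$ and $\HH^8\otimes\HH^4$; the outer two components of $[\theta]$ are of the shape $(\text{algebraic})\otimes[\mathrm{pt}]$ and $[\mathrm{pt}]\otimes(\text{algebraic})$, the pushforwards $(p_i)_*[\theta]$ being algebraic, hence the middle component $[\theta]_{6,6}$ is algebraic as well. Second, under the Beauville--Donagi isomorphism \cite{bd} and the perfect Poincar\'e pairing $\HH^6(F)\times\HH^2(F)\to\Z$, the displayed identity says that the bilinear form attached to $w:=[\theta]_{6,6}+q_{\mathfrak{B}}\in\HH^6(F)\otimes\HH^6(F)$ vanishes as soon as one argument is transcendental, whence $w\in\mathrm{Hdg}^6(F,\Q)\otimes\mathrm{Hdg}^6(F,\Q)$.

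Because $\mathrm{Hdg}^6(F,\Z)$ is saturated in the torsion-free lattice $\HH^6(F,\Z)$, its tensor square is a direct summand of $\HH^6(F,\Z)^{\otimes 2}$; hence the integral class $w$ in fact lies in $\mathrm{Hdg}^6(F,\Z)\otimes\mathrm{Hdg}^6(F,\Z)$, and the assumed integral Hodge conjecture for $1$-cycles on $F$ makes $w$ algebraic. Therefore $q_{\mathfrak{B}}=w-[\theta]_{6,6}$ is algebraic. I expect the genuine difficulty to sit exactly here: the transcendental part of $q_{\mathfrak{B}}$ is supplied for free by $\theta$, but its algebraic part is invisible to the diagonal pairing and is recoverable only through the integral Hodge conjecture for $1$-cycles on $F$, while the descent from a rational to an integral algebraic class forces the saturation argument above. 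Finally, when $\mathrm{Hdg}^4(X)=\Z h^2$ the group $\mathrm{Hdg}^6(F)$ is generated by classes coming from the Pl\"ucker polarisation, so the integral Hodge conjecture for $1$-cycles holds automatically and the hypothesis in the converse can be dropped.
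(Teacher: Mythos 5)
Your proof follows the same architecture as the paper's: in the forward direction, algebraicity of $q_{\mathfrak{B}}$ makes $[\Delta_X]+(P\times P)_*q_{\mathfrak{B}}$ decomposable and hence yields a cohomological decomposition of the diagonal; in the converse, the cycle $\theta$ of Theorem \ref{thm cubic 34}(ii) makes $q_{\mathfrak{B}}+[\theta]$ decomposable, and the integral Hodge conjecture for $1$-cycles on $F$ makes it algebraic. Your converse is in fact substantially more careful than the paper's one-line assertion ``decomposible and hence algebraic'': the K\"unneth bookkeeping ($[\theta]_{4,8}=[(p_1)_*\theta]\otimes[\mathrm{pt}]$, so the outer components are algebraic for free), the key observation that the right-hand side of your displayed identity vanishes as soon as \emph{one} argument is transcendental (which is what puts $w$ rationally in $\mathrm{Hdg}^6\otimes\mathrm{Hdg}^6$ rather than merely in $\mathrm{Hdg}^6\otimes\HH^6+\HH^6\otimes\mathrm{Hdg}^6$), and the saturation argument descending from $\Q$- to $\Z$-coefficients are exactly the details the paper compresses; I checked them and they are correct. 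Two cosmetic caveats: ``hyperk\"ahler implies $\HH^{\mathrm{odd}}=0$'' is not a general fact (it fails for generalized Kummer varieties) but holds here because $F$ is of $K3^{[2]}$ deformation type, and your closing justification of the parenthetical case $\mathrm{Hdg}^4(X)=\Z h^2$ is heuristic --- though the paper itself asserts it without proof.

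There is one step in your forward direction that fails as written: the symmetrisation. If $Z$ is an algebraic cycle with $[Z]=-q_{\mathfrak{B}}$, the symmetry of $\mathfrak{B}$ only gives you a second representative ${}^tZ$; the cycle $Z+{}^tZ$ has class $-2q_{\mathfrak{B}}$, and you cannot divide by $2$ in $\CH_2(F\times F)$. This matters in your plan, because both Theorem \ref{thm main coh} and Proposition \ref{prop alg triv} (through Voisin's descent to $X^{[2]}$) require symmetry of the \emph{cycle}, not merely of its class. The slip is repairable in two ways. Either restore symmetry by the composition trick used in Proposition \ref{prop voisin}: from a (possibly non-symmetric) cohomological decomposition $[\Delta_X]=[X\times x]+[W]$ one gets $[\Delta_X-X\times x-x\times X]=[W\circ{}^tW]$ with $W\circ{}^tW$ symmetric as a cycle. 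Or simply do what the paper does: note that producing a cohomological decomposition of the diagonal requires no symmetric representative at all, and then invoke Voisin's theorem \cite{voisin universal} that for cubic hypersurfaces a cohomological decomposition already implies universal $\CH_0$-triviality. Your re-derivation of that promotion via Proposition \ref{prop alg triv} plus the nilpotence theorems \cite{v nil, voisin nil} is essentially a re-run of Voisin's argument as embedded in the $d=4$ part of the proof of Theorem \ref{thm cubic 34}; it goes through once the symmetry is restored, but it buys nothing over the citation.
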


\begin{proof}
Assume that $q_{\mathfrak{B}}$ is algebraic. Since $[\Delta_X] + (P\times P)_*q_{\mathfrak{B}}$ pairs to zero with $\alpha \otimes \beta$ for all $\alpha,\beta\in \HH^4(X,\Z)_{\mathrm{prim}}$, we know that $[\Delta_X] + (P\times P)_*q_{\mathfrak{B}}$ is decomposable. Thus $X$ admits a cohomological decomposition of the diagonal, which implies universal $\CH_0$-triviality by Voisin \cite{voisin universal}. Assume that $\CH_0(X)$ is universally trivial. Then we get the cycle $\theta\in \CH_2(F\times F)$ as in Theorem \ref{thm cubic 34}. The Hodge class $q_{\mathfrak{B}}+[\theta]$ is decomposable and hence algebraic by the assumption that the integral Hodge conjecture holds for 1-cycles on $F$.
\end{proof}

\subsection{The minimal class on the intermediate Jacobian of a cubic threefold}
Let $X$ be a smooth cubic threefold and let $F$ be the surface of lines on $X$. Let $J^3(X)$ be the intermediate Jacobian of $X$. It is known that the Abel--Jacobi map
\[
\phi: F\rightarrow J^3(X)
\]
induces an isomorphism 
\[
\phi^*: \HH^1(J^3(X),\Z) \longrightarrow \HH^1(F,\Z).
\]
There is a natural identification $\HH^1(J^3(X),\Z)\cong \HH^3(X,\Z)$, under which we have
\begin{equation}\label{eq phi aj}
\phi^*\alpha = \hat\alpha =p_*q^*\alpha,
\end{equation}
for all $\alpha\in \HH^1(J^3(X), \Z)=\HH^3(X,\Z)$. By taking the difference or the sum, we have the following morphisms
\begin{align*}
\phi_+ : &F\times F \longrightarrow J^3(X),\quad (u,v)\mapsto \phi(x) + \phi(y),\\
\phi_- : & F\times F \longrightarrow J^3(X), \quad (u,v)\mapsto \phi(x) - \phi(y).
\end{align*}
By \cite[Theorem 13.4]{cg} and its proof, the image of $\phi_-$ is a theta divisor of $J^3(X)$ and $\phi_-$ has degree 6 onto its image. 

\begin{lem}
The image of $\phi_+$ is a divisor $D_+$ of cohomological class $3\Theta$ and $\phi_+$ has degree 2 onto its image.
\end{lem}
\begin{proof}
We first give a description of the generic behaviour of the degree 6 morphism $\phi_-$. Let $l_1$ and $l_2$ be a general pair of lines on $X$. Then $l_1$ and $l_2$ span a linear $\PP^3$ and its intersection with $X$ is a smooth cubic surface $\Sigma$ containing $l_1$ and $l_2$. Realise $\Sigma$ as a blow up of $\PP^2$ in 6 points $\{P_1,P_2,\ldots,P_6\}$ with $E_i$, $i=1,2,\ldots,6$, being the exceptional curves. This can be chosen in such a way that $E_1=l_1$ and $E_2=l_2$. Let $C_i\subset \Sigma$ be the strict transform of the conic in $\PP^2$ that passes all the 6 points except $P_i$. Let $L_{ij}$, $1\leq i<j\leq 6$, be the strict transform of the line in $\PP^2$ passing through the points $P_i$ and $P_j$. Then $E_i$, $C_j$ and $L_{i'j'}$ are the 27 lines on $\Sigma$. It is clear that, in $\CH^1(\Sigma)$, we have
\[
E_1-E_2 = C_1-C_2 = L_{23}-L_{13} = L_{24}-L_{14} = L_{25}- L_{15} = L_{26} - L_{16}.
\]
It follows that $(\phi_{-})^{-1} \phi_{-}([l_1],[l_2])$ consists of the following 6 points
\begin{equation}\label{eq list}
 ([E_1], [E_2]), \quad ([C_1],[C_2]),\quad ([L_{23}], [L_{13}]), \quad ([L_{24}],[L_{14}]), \quad ([L_{25}], [L_{15}]),\quad ([L_{26}], [L_{16}]).
\end{equation}

Now let $l'_1$ and $l'_2$ another general pair of lines on $X$. Assume that $l''_1$ and $l''_2$ be a different pair of lines such that $\phi_{+}([l'_1],[l'_2]) = \phi_{+}([l''_1],[l''_2])$. Then we have $\phi_{-}([l'_1],[l''_2]) = \phi_{-}([l''_1],[l'_2])$ and hence $([l'_1],[l''_2])$ and $([l''_1],[l'_2])$ form two distinct points from the list \eqref{eq list}. This implies that $l'_1$ meets $l'_2$ (and that $l''_1$ meets $l''_2$) which is a contradiction since $l'_1$ and $l'_2$ are general. This implies that $\phi_+$ is of degree 2 onto its image.

The lemma follows from the fact that $(\phi_{+})_*[F\times F] = (\phi_-)_*[F\times F] = 6\Theta$ in $\HH^2(J^3(X),\Z)$. This fact can be seen as follows. It is known that $\phi_*[F] = \frac{\Theta^3}{3!}$ in $\HH^6(J^3(X),\Z)$. It follows that $[-1]_*\phi_*[F] = \phi_*[F]$ in $\HH^6(J^{3}(X),\Z)$. Then we have
\begin{align*}
(\phi_{-})_*[F\times F] & = (\mu_{+})_*(Id\times [-1])_*(\phi\times \phi)_*[F\times F] \\
& = (\mu_+)_*\Big( \phi_*[F]\otimes [-1]_*\phi_*[F] \Big)\\
& = (\mu_+)_*( \phi_*[F]\otimes \phi_*[F] )\\
&= (\mu_+)_*[F\times F],
\end{align*}
where $\mu_+:J^3(X)\times J^3(X) \rightarrow J^3(X)$ is the addition morphism.
\end{proof}

\begin{prop}
If $\CH_0(X)$ is universally trivial, then the following holds.
\begin{enumerate}[(i)]
\item The minimal class of $J^3(X)$ is algebraic and supported on the divisor $D_+\subset J^3(X)$ of cohomology class $3\Theta$.
\item Twice of the minimal class is represented by a symmetric 1-cycle supported on a theta divisor of $J^3(X)$.
\end{enumerate}
\end{prop}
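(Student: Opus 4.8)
The plan is to transport the symmetric $1$-cycle produced by Theorem \ref{thm cubic 34}(i) to $J^3(X)$ along the two maps $\phi_+$ and $\phi_-$, and to read off the resulting cohomology classes from the pairing identity of Theorem \ref{thm main coh}. Since $\CH_0(X)$ is universally trivial, $X$ has a Chow-theoretic decomposition of the diagonal, so Theorem \ref{thm cubic 34}(i) gives a symmetric $\theta\in\CH_1(F\times F)$ whose class satisfies $[\theta]\cdot(\hat\alpha\otimes\hat\beta)=\langle\alpha,\beta\rangle_X$ for all $\alpha,\beta\in\HH^3(X,\Z)=\HH^3(X,\Z)_{\mathrm{tr}}$, where $\hat\alpha=\phi^*\alpha$. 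I would first normalise $\theta$: writing $a:=(p_1)_*\theta=(p_2)_*\theta\in\CH_1(F)$ and fixing a degree-one $o\in\CH_0(F)$, replace $\theta$ by
\[
 \theta':=\theta-a\times o-o\times a .
\]
Then $\theta'$ is still symmetric, carries no component along the diagonal $\Delta_F$ (the correction terms are ``horizontal''), and satisfies $(p_1)_*\theta'=(p_2)_*\theta'=0$. Since $a\times o$ and $o\times a$ lie in the Künneth summands $\HH^2(F)\otimes\HH^4(F)$ and $\HH^4(F)\otimes\HH^2(F)$, they pair trivially with $\hat\alpha\otimes\hat\beta\in\HH^1(F)\otimes\HH^1(F)$, so $\theta'$ still satisfies the identity above.

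Next I would compute the classes of the pushforwards. Recall $\HH^{\bullet}(J^3(X),\Z)=\bigwedge^{\bullet}\HH^1(J^3(X),\Z)$, so $\HH^8$ is dual to $\HH^2=\bigwedge^2\HH^1(J^3(X),\Z)$ under cup product; with $g=5$ the minimal class $m:=\frac{\Theta^4}{4!}$ is the unique element of $\HH^8$ with $m\cdot(\omega_1\wedge\omega_2)=\langle\alpha,\beta\rangle_X$, because the Riemann form of $\Theta$ on $\HH^1(J^3(X))=\HH^3(X)$ is $\langle-,-\rangle_X$. Here $\omega_1,\omega_2\in\HH^1(J^3(X))$ correspond to $\alpha,\beta$, and $\phi_\pm^*\omega=p_1^*\hat\alpha\pm p_2^*\hat\alpha$. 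Expanding $\phi_\pm^*(\omega_1\cup\omega_2)$ and pairing against $\theta'$ via the projection formula, the two ``pulled-back'' terms vanish because $(p_i)_*\theta'=0$, while the two mixed terms each contribute $\pm\langle\alpha,\beta\rangle_X$ (using $[\theta']\cdot(\hat\alpha\otimes\hat\beta)=\langle\alpha,\beta\rangle_X$ together with the skew-symmetry of $\langle-,-\rangle_X$). Thus
\[
 [(\phi_+)_*\theta']=2m,\qquad [(\phi_-)_*\theta']=-2m .
\]
This settles (ii): $-(\phi_-)_*\theta'$ is supported on $\mathrm{Im}(\phi_-)$, which is a theta divisor; it is $[-1]$-invariant because $\phi_-\circ\tau=-\phi_-$ while $\tau_*\theta'=\theta'$; and its class is $2m$.

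For (i) the remaining task is to divide out the factor $2$. Since $\phi_+\circ\tau=\phi_+$ has degree two onto $D_+$ and $\theta'$ is symmetric with no diagonal component, I would group the components of $\theta'$ into $\tau$-orbits and push one representative of each orbit forward, obtaining an integral $1$-cycle $B$ on $D_+$ with $(\phi_+)_*\theta'=2B$: a curve $C$ with $\tau C\ne C$ enters $\theta'$ together with $\tau C$ and satisfies $(\phi_+)_*C=(\phi_+)_*(\tau C)$, while a $\tau$-invariant off-diagonal curve maps to $D_+$ with even degree (there $\tau$ restricts to a nontrivial involution). As $\HH^8(J^3(X),\Z)$ is torsion-free, $2[B]=2m$ forces $[B]=m$; hence the minimal class is algebraic and is represented by the cycle $B$ supported on $D_+$, a divisor of class $3\Theta$. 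The main obstacle is precisely this descent through the degree-two map $\phi_+$: it is what upgrades the weaker statement (twice the minimal class, as in (ii)) to the sharp statement (i), and it is the reason one must ensure from the start that $\theta'$ carries no component along $\Delta_F$.
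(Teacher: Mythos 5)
Your route is the paper's own: you normalize $\theta$ so that both projections push it to zero (the paper subtracts $\theta_1\otimes\mathfrak{o}+\mathfrak{o}\otimes\theta_1$; your $\theta-a\times o-o\times a$ has the same effect), pull back $1$-classes along $\phi_\pm$ via $\phi_\pm^*\omega=\hat\alpha\otimes 1\pm 1\otimes\hat\alpha$, and pair against $\theta'$ using $[\theta]\cdot(\hat\alpha\otimes\hat\beta)=\langle\alpha,\beta\rangle_X$ together with the symmetry of $\theta$ and the skew-symmetry of $\langle-,-\rangle_X$. Your class computations $[(\phi_\pm)_*\theta']=\pm 2m$ agree with the paper's (up to its sign convention, where $-[\eta]$ is the minimal class), and your treatment of (ii) — support on $\mathrm{Im}(\phi_-)$, $[-1]$-invariance from $\phi_-\circ\tau=-\phi_-$ and $\tau_*\theta'=\theta'$ — is identical to the paper's.

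The problem is in your descent for (i). You correctly identify that dividing $(\phi_+)_*\theta'$ by $2$ is the crux and that components of $\theta'$ inside $\Delta_F$ are the obstruction, but your claim that $\theta'$ carries no such component is a non sequitur: the parenthetical ``the correction terms are horizontal'' shows only that $a\times o$ and $o\times a$ add no diagonal component, not that $\theta$ lacked one. In fact the $\theta$ produced by Theorem \ref{thm cubic 34} has the form $\sum n_i\,T_i\circ{}^tT_i$ with $T_i\in\CH^2(Z_i\times F)$, and such self-compositions typically \emph{do} have diagonal components: the support of ${}^tT_i\times_{Z_i}T_i$ contains the self-correlation locus $\{(f,f):(z,f)\in T_i\}\subset\Delta_F$, and in the extreme case where $T_i$ is the graph of a morphism $Z_i\to F$ the entire cycle $T_i\circ{}^tT_i$ lies in $\Delta_F$. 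So your orbit argument, as written, fails exactly where you flagged the main obstacle. (The paper itself dispatches the divisibility in one line — ``since $\tilde\theta$ is symmetric, $(\phi_+)_*\tilde\theta=2\eta$'' — which silently ignores the same locus, so you are at least more explicit about the issue.) The gap is patchable: on $\Delta_F\cong F$ the map $\phi_+$ factors as $[2]\circ\phi$, and $[2]_*$ acts on $\HH_2(J^3(X),\Z)$ by multiplication by $4$, so a diagonal component $c$ contributes the class $4[\phi_*c]$; writing $2m=2[A']+4\sum_j[\phi_*c_j]$ with $A'$ your off-diagonal descent cycle gives $m=[A']+2\sum_j[\phi_*c_j]$, and since the translate $\phi(F)+\phi(y_0)=\phi_+(F\times\{y_0\})$ lies in $D_+$ while translations act trivially on $\HH^*(J^3(X),\Z)$, the integral cycle $A'+2\sum_j (t_{\phi(y_0)})_*\phi_*c_j$ is supported on $D_+$ and has class $m$. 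With this repair (and the harmless remark that the construction $\sum n_i\,T_i\circ{}^tT_i$ admits a cycle-level symmetric representative, which your orbit argument tacitly needs), your proof is complete and coincides with the paper's.
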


\begin{proof}
The morphism $\phi_+$ factors as 
\[
\xymatrix{
 F\times F\ar[r]^{(\phi,\phi)\qquad} & J^3(X)\times J^3(X)\ar[r]^{\qquad\mu_+} &J^3(X)
}
\]
where $\mu_+(x,y)=x+y$ is the summation morphism. Thus for any $\alpha\in \HH^1(J^3(X),\Z)$, we have
\[
(\phi_+)^*\alpha = (\phi,\phi)^*(\mu_+)^*\alpha = (\phi,\phi)^*(\alpha\otimes 1 + 1\otimes \alpha) = \phi^*\alpha \otimes 1 + 1\otimes \phi^*\alpha.
\]
Similarly, we also have $(\phi_-)^*\alpha = \phi^*\alpha\otimes 1 - 1\otimes \phi^*\alpha$. Let $\theta\in \CH_1(F\times F)$ be the symmetric cycle as in Theorem \ref{thm cubic 34}. Then for all $\alpha,\beta\in \HH^1(J^3(X),\Z)$ we have
\begin{align*}
(\phi_+)_*[\theta ]\cup \alpha\cup\beta & =[\theta]\cup (\phi_+)^*\alpha \cup (\phi_+)^*\beta\\
 & = [\theta]\cup (\phi^*\alpha \otimes 1 + 1 \otimes \phi^*\alpha)\cup (\phi^*\beta \otimes 1 + 1\otimes\beta)\\
 & = [\theta]\cup ((\hat\alpha\cup\hat\beta)\otimes 1 + \hat\alpha \otimes \hat\beta - \hat\beta \otimes \hat\alpha + 1\otimes (\hat\alpha \cup \hat\beta) )\\
 & = 2\phi_*[\theta_1] \cup \alpha\cup\beta + 2 \langle \alpha,\beta \rangle_X ,
\end{align*}
where $\theta_1=(pr_1)_*\theta \in \CH_1(F)$. The same computation shows that
\[
 (\phi_+)_*[\theta_1\otimes \mathfrak{o}]\cup \alpha\cup \beta =  (\phi_+)_*[\mathfrak{o}\otimes \theta_1]\cup \alpha\cup \beta = \phi_*[\theta_1]\cup \alpha \cup \beta,
\]
where $\mathfrak{o}\in J^3(X)$ is the zero element. Take $\tilde\theta = \theta - \theta_1\otimes \mathfrak{o} - \mathfrak{o}\otimes \theta_1$, then
\[
 (\phi_+)_*[\tilde\theta] \cup \alpha\cup \beta = 2 \langle \alpha, \beta\rangle_X.
\]
Since $\tilde\theta$ is again a symmetric cycle, we know that $(\phi_+)_*\tilde\theta = 2 \eta$ for some $\eta\in \CH_1(J^3(X))$ supported on $D_+$. Thus $[\eta]\cup\alpha \cup \beta = \langle \alpha, \beta \rangle_X$ and hence $-[\eta]$ is the minimal class on $J^3(X)$. We carry out the same computation for $\phi_-$ and see that 
\[
(\phi_-)_*[\tilde\theta] \cup \alpha \cup \beta = -2\langle \alpha,\beta \rangle_X.
\]
Thus the cohomology class of $(\phi_-)_*\tilde\theta$ is twice the minimal class. Furthermore it is symmetric (with respect to multiplication by $-1$ on $J^3(X)$) and supported on the image of $\phi_-$ which is a theta divisor. 
\end{proof}

\subsection{Cubic fivefolds}
Let $X\subset \PP^6$ be a cubic fivefold and let $F$ be its variety of lines. Let $J^5(X)$ be the intermediate Jacobian of $X$, which happens to a be a principally polarized abelian variety.

\begin{prop}
If both $\CH_0(X)$ and $\CH_0(F)$ are universally trivial, then there exist curves finitely many $C_i$ together with a splitting surjective homomorphism $\bigoplus_i J(C_i)\longrightarrow J^5(X)$. 
\end{prop}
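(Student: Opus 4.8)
The plan is to show that the weight-one Hodge structure underlying $J^5(X)$, namely $\HH^5(X,\Z)$ Tate-twisted to type $(1,0)+(0,1)$, is an algebraic direct summand of $\bigoplus_i\HH^1(C_i,\Z)$ for suitable curves $C_i$; transposing the splitting then produces the desired splitting surjection $\bigoplus_i J(C_i)\to J^5(X)$. I would obtain this in two reductions: first transport $\HH^5(X)$ into $\HH^3(F)$ by the cylinder correspondence $P$, and then peel $\HH^3(F)$ down to the $\HH^1$ of curves using the decomposition of $\Delta_F$ coming from the universal triviality of $\CH_0(F)$.

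\emph{Step one (descent from $X$ to $F$).} The incidence correspondence gives $\hat\alpha=p_*q^*\alpha\in\HH^3(F)$ for $\alpha\in\HH^5(X)$. As in Proposition \ref{prop fil cubic}, the divisor $D_l\subset F$ of lines meeting a fixed line $l$ satisfies $[D_l]\cdot\hat\alpha\cdot\hat\beta=-2\langle\alpha,\beta\rangle_X$, so $P^*\colon\HH^5(X)\to\HH^3(F)$ is injective and its image $V$ carries, up to the factor $-2$, the (unimodular, skew) intersection form of $X$. I would upgrade this to an algebraic splitting: using that $\CH_0(X)$ is universally trivial together with the universal generation of $1$-cycles by lines (Theorem \ref{thm cubic generation}), Theorem \ref{thm theta} with $r=1$ produces a symmetric $\theta\in\CH_3(F\times F)$ with $[\theta]\cdot(\hat\alpha\otimes\hat\beta)=\langle\alpha,\beta\rangle_X$. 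This cycle realizes the self-duality of $\HH^5(X)$ by an algebraic correspondence through $F$, and hence exhibits $V$ as an algebraic direct summand of $\HH^3(F)$.

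\emph{Step two (descent from $F$ to curves).} Since $\CH_0(F)$ is universally trivial, Proposition \ref{prop voisin} gives $\Delta_F=F\times f+f\times F+\sum_j m_j\,\Lambda_j\circ{}^t\Lambda_j$ with $\Lambda_j\in\CH_5(W_j\times F)$ and $\dim W_j=4$, for a point $f\in F$. Acting on the odd group $\HH^3(F)$ the two outer terms vanish, and the degree count shows ${}^t\Lambda_j$ maps $\HH^3(F)$ to $\HH^1(W_j)$ and $\Lambda_j$ maps back; thus $\mathrm{id}_{\HH^3(F)}$ factors algebraically through $\bigoplus_j\HH^1(W_j)$, so $\HH^3(F)$, and with it $V$, is an algebraic direct summand of $\bigoplus_j\HH^1(W_j)$. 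Finally, iterated smooth hyperplane sections and the Lefschetz hyperplane theorem realize each $\HH^1(W_j)$ as a direct summand of $\HH^1(C_j)$ for a complete-intersection curve $C_j\subset W_j$, the retraction being furnished by the Gysin map and hard Lefschetz. Composing the three reductions identifies $V\cong\HH^5(X)(1)$ with an algebraic direct summand of $\bigoplus_j\HH^1(C_j)$, and transposing the inclusion yields the splitting surjection $\bigoplus_j J(C_j)\to J^5(X)$.

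\emph{Main obstacle.} The real work is to keep every arrow honestly algebraic and to obtain a genuine section rather than a surjection up to torsion. The crux lies in Step one: Theorem \ref{thm theta} only guarantees the pairing identity for $\theta$ on $F^{2r+1}\HH^5(X)=F^3\HH^5(X)$, so I must check that $F^3\HH^5(X)=\HH^5(X)$, i.e. that no correspondence from a surface detects a class in $\HH^5(X)$; the classical relation $[D_l]\cdot\hat\alpha\cdot\hat\beta=-2\langle\alpha,\beta\rangle_X$, which holds on all of $\HH^5(X)$, is the natural substitute that splits the cylinder without this restriction. The remaining bookkeeping — the factor $-2$, the degrees of the Lefschetz pencils, and Poincar\'e reducibility — only affects the splitting up to isogeny, which is precisely why the conclusion is stated without respecting the principal polarizations.
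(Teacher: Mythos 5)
There is a genuine gap, and it sits exactly where you located it — but your proposed repair fails, and the failure is not mere bookkeeping. Theorem \ref{thm theta} with $r=1$ controls the pairing only on $F^{3}\HH^5(X,\Z)$, and your hope that $F^{3}\HH^5(X,\Z)=\HH^5(X,\Z)$ is false: a cubic fivefold carries a \emph{surface} of planes $F_2(X)$, whose cylinder correspondence lies in $\CH_4(F_2(X)\times X)$ and is therefore admissible in Definition \ref{defn filtration} for $i=3$; by Collino's theorem this correspondence injects $\HH^5(X,\Z)$ into the $\HH^1$ of that surface, so in fact $F^{3}\HH^5(X,\Z)=0$ and Theorem \ref{thm theta} with $r=1$ says nothing at all about $\HH^5(X)$. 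Your fallback via $[D_l]\cdot\hat\alpha\cdot\hat\beta=-2\langle\alpha,\beta\rangle_X$ only factors $\pm 2\cdot\mathrm{id}$ through $\HH^3(F)$, and in Step two the retraction $\HH^1(W_j)\to\HH^1(C_j)\to\HH^1(W_j)$ furnished by Gysin is cup product with $h^{\dim W_j-1}$, which is invertible only rationally since integral hard Lefschetz fails. So your route proves at best that $J^5(X)$ is an isogeny factor of a product of Jacobians — which holds for \emph{every} principally polarized abelian variety (embed a complete-intersection curve and apply Poincar\'e reducibility) and is therefore vacuous. You have misread the parenthetical ``without respecting the principal polarizations'': it waives only the compatibility of the section with the theta divisors; the entire content of the proposition is the genuine integral splitting, i.e.\ that $J^5(X)$ is an honest direct summand.

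The missing idea is to use the two hypotheses \emph{jointly} at the level of Chow groups, rather than serially at the level of cohomology as you do (universal triviality of $\CH_0(X)$ in Step one, of $\CH_0(F)$ in Step two). In the paper's proof, $F$ never enters through its cohomology at all: since $\CH_0(F)$ is universally trivial and $P_*:\CH_0(F)\to\CH_1(X)$ is universally surjective by Theorem \ref{thm cubic generation}, the group $\CH_1(X)$ is universally trivial; with $\CH_0(X)$ also universally trivial, Corollary \ref{cor voisin generalized} applies with $r=2$, $d=5$, and produces curves directly, since $\dim Z_i=d-2r=1$. One obtains $\Gamma_i\in\CH^3(C_i\times X)$, integers $n_i$ and maps $\sigma_i$ (identity or an involution) with $\sum n_i\langle\Gamma_i^*\alpha,\sigma_i^*\Gamma_i^*\beta\rangle_{C_i}=\langle\alpha,\beta\rangle_X$ on \emph{all} of $\HH^5(X,\Z)$: here no filtration condition intervenes, because at this level only correspondences from points need to be discarded and those kill odd cohomology, so $F^{5}\HH^5(X,\Z)=\HH^5(X,\Z)$ — in sharp contrast with $F^{3}$. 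Unimodularity of $\langle-,-\rangle_X$ on the torsion-free group $\HH^5(X,\Z)$ then forces $\sum n_i\,\Gamma_{i*}\circ\sigma_{i*}\circ\Gamma_i^*=\mathrm{id}$ integrally, so the Abel--Jacobi maps $\phi_i:J(C_i)\to J^5(X)$ assemble into a surjection $\phi$ with the genuine section $\sum\sigma_i^\vee\circ\phi_i^\vee$. Your two-step descent through $\HH^3(F)$ and the fourfolds $W_j$ cannot be patched to recover this integrality.
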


\begin{proof}
Since $\CH_0(F)\rightarrow \CH_1(X)$ is universally surjective, we know that $\CH_1(X)$ is universally trivial. By Corollary \ref{cor voisin generalized}, the universal triviality of both $\CH_0(X)$ and $\CH_1(X)$ implies the existence of curves $C_i$, corresponcences $\Gamma_i\in\CH^3(C_i\times X)$ and integers $n_i$ such that
\begin{equation}\label{eq intersection Ci}
\sum n_i \langle \Gamma_i^*\alpha,\sigma_i^*\Gamma_i^*\beta \rangle_{C_i} = \langle \alpha,\beta \rangle_X,
\end{equation}
for all $\alpha,\beta \in F^5\HH^5(X,\Z) = \HH^5(X,\Z)$, where $\sigma_i:C_i\rightarrow C_i$ is either the identity map or an involution. Each cycle $\Gamma_i$ defines the associated Abel--Jacobi map
\[
\phi_i : J(C_i)\longrightarrow J^5(X).
\]
Combining them together we have the surjective homomorphism
\[
\phi: \bigoplus_{i}J(C_i)\longrightarrow J^5(X).
\]
Then the equation \eqref{eq intersection Ci} implies that $\phi$ has a section given by $\sum \sigma_i^\vee\circ \phi_i^\vee$.
\end{proof}

\end{document}